\DeclareMathAlphabet{\mathcal}{OMS}{cmsy}{m}{n}
\definecolor{mygreen}{rgb}{0,0.6,0}
\definecolor{mygray}{rgb}{0.5,0.5,0.5}
\definecolor{mymauve}{rgb}{0.4,0,1}
\theoremstyle{plain}
\newtheorem{theorem}{Theorem}[section]{}{}
\newtheorem*{theorem*}{Theorem}
\newtheorem{lemma}[theorem]{Lemma}
\newtheorem{corollary}[theorem]{Corollary}
\newtheorem{proposition}[theorem]{Proposition}
\newtheorem{conjecture}[theorem]{Conjecture}
\newtheorem*{conjecture*}{Conjecture}
\newtheorem*{question*}{Question}
\theoremstyle{definition}
\newtheorem{definition}[theorem]{Definition}
\theoremstyle{remark}
\newtheorem{remark}[theorem]{Remark}
\crefname{theorem}{Theorem}{Theorems}
\crefname{lemma}{Lemma}{Lemmas}
\crefname{proposition}{Proposition}{Propositions}
\crefname{corollary}{Corollary}{Corollaries}
\crefname{definition}{Definition}{Definitions}
\crefname{remark}{Remark}{Remarks}
\crefname{example}{Example}{Examples}
\crefname{question}{Question}{Questions}
\crefname{conjecture}{Conjecture}{Conjectures}
\crefname{chapter}{Chapter}{Chapters}
\crefname{section}{Section}{Sections}
\crefname{figure}{Figure}{Figures}
\crefname{appendix}{Appendix}{Appendices}
\newtheoremstyle{TheoremNum}
    {8pt}{8pt}              
    {\itshape}                      
    {}                              
    {\bfseries}                     
    {.}                             
    { 5pt plus 1pt minus 1pt }                             
    {\thmname{#1}\thmnote{ \bfseries #3}}
\theoremstyle{TheoremNum}
\newtheorem{thmrep}{Theorem}
\newtheorem{cororep}{Corollary}
\newcommand{\tq}{\mathrel{{\ensuremath{\: : \: }}}}
\def\wt{\widetilde}
\let\emptyset\varnothing
\def\co{\colon}
\newcommand{\freeGroup}{F}
\newcommand{\finiteField}{\mathbb{F}}
\def\actson{\curvearrowright}
\newcommand\BigFreeProd{\displaystyle\mathop{\mbox{\huge{$\ast$}}}}
\newcommand{\Aut}{\mathrm{Aut}}
\def\PSL{\mathrm{PSL}}
\def\Sz{\mathrm{Sz}}
\def\SO{\mathrm{SO}}
\def\O{\mathrm{O}}
\def\llangle{\langle\!\langle}
\def\rrangle{\rangle\!\rangle}
\def\D{\mathbb{D}}
\def\N{\mathbb{N}}
\def\Z{\mathbb{Z}}
\def\Q{\mathbb{Q}}
\def\R{\mathbb{R}}
\def\C{\mathbb{C}}
\def\H{\mathbb{H}}
\def\FF{\mathcal{F}}
\def\SS{\mathcal{S}}
\def\K{\mathcal{K}}
\def\FF{\mathcal{F}}
\def\SLV{\mathcal{SLV}}
\def\XOS{\Gamma_{OS}}
\def\sg{\mathrm{sg}}
\def\tt{\mathbf{t}}
\def\ii{\mathbf{i}}
\def\jj{\mathbf{j}}
\def\kk{\mathbf{k}}
\def\bad{u}
\def\ttbad{\tt_{\bad}}
\def\zzbad{\mathbf{z}_{\bad}}
\def\rotMat{R}
\title{Group actions of $A_5$ on contractible $2$-complexes}
\author{Iván Sadofschi Costa}
\address{Departamento  de Matem\'atica - IMAS\\
 FCEyN, Universidad de Buenos Aires. Buenos Aires, Argentina.}
\email{isadofschi@dm.uba.ar}
\subjclass[2010]{
57S17, 
57M20, 
57M60, 
55M20, 
55M25, 
20F05, 
20E06
}
\keywords{Group actions, contractible $2$-complexes, moduli of group representations, mapping degree}
\thanks{Researcher of CONICET. The author was partially supported by grants PICT-2017-2806, PIP 11220170100357CO and UBACyT 20020160100081BA}
\newcommand{\statementTheoremWi}{There is no presentation of $A_5$ of the form
$$\langle a,b,c,d,x_0,\ldots, x_k \mid a^2, b^3, c^2, d^2, (ab)^3, (bc)^2,(cd)^5, x_0ax_0^{-1}=d, w_0,\ldots, w_k\rangle$$
with $w_0,\ldots, w_k\in \ker(\phi)$, where $\phi\colon F(a,b,c,d,x_0,\ldots, x_k)\to A_5$ is given by
$a\mapsto (2,5)(3,4)$, $b\mapsto (3,5,4)$, $c\mapsto (1,2)(3,5)$, $d\mapsto (2,5)(3,4)$ and $x_i\mapsto 1$ for each $i=0,\ldots, k$.}
\newcommand{\statementTheoremCDAFive}{Every action of $A_5\cong \PSL_2(2^2)$ on a finite, contractible $2$-complex has a fixed point.}
\newcommand{\statementCharacterizationFundamentalGroupsAcyclicAFive}{Let $X$ be a fixed point free $2$-dimensional finite and acyclic $A_5$-complex and let $\pi=\pi_1(X)$. Then $\pi$ is infinite or there is an epimorphism $\pi\to A_5$.}
\newcommand{\statementCDForSimpleGroupsContainingAFive}{Let $G$ be one of the groups $\PSL_2(2^{2k})$, $\PSL_2(5^k)$ for $k\geq 1$ or $\PSL_2(q)$ for $q\equiv \pm 3 \pmod 8$ and $q\equiv \pm 1\pmod 5$.
 Then every action of $G$ on a finite contractible $2$-complex has a fixed point.}
\begin{document}
\begin{abstract}
We prove that every action of $A_5$ on a finite $2$-dimensional contractible complex has a fixed point.
\end{abstract}

\maketitle

\setcounter{tocdepth}{1}
\tableofcontents

\section{Introduction}

A well-known result of Jean-Pierre Serre \cite{SerreTrees} states that every action of a finite group on a contractible $1$-complex (i.e. a  tree) has a fixed point.
By Smith theory, every action of a $p$-group on the disk $\D^n$ has a fixed point.
The group $A_5$ acts simplicially and fixed point freely on the barycentric subdivision $X$ of the $2$-skeleton of the Poincaré homology sphere which is an acyclic $2$-complex.
By considering the join $X*A_5$, Edwin E. Floyd and Roger W. Richardson \cite{FloydRichardson} proved that $A_5$ acts simplicially and fixed point freely on a contractible $3$-complex.
Moreover, by embedding $X*A_5$ in $\R^{81}$ and taking a regular neighbourhood they proved that $A_5$ acts simplicially and fixed point freely on a triangulation of the disk $\D^{81}$.
This was the only example known of this kind until Bob Oliver obtained a complete classification of the groups that act fixed point freely on a disk $\D^n$ \cite{OliverDisks}.
The example of Floyd and Richardson makes clear that it is not possible to extend Serre's result to $3$-complexes and is natural to wonder if it holds for $2$-complexes.
Carles Casacuberta and Warren Dicks \cite{CD} made the following conjecture (without requiring $X$ to be finite)
which was also posed by Michael Aschbacher and Yoav Segev as a question \cite[Question 3]{AschbacherSegev} in the finite case.

\begin{conjecture}\label{CasacubertaDicks}
Let $G$ be a finite group. If $X$ is a $2$-dimensional finite contractible $G$-complex then $X^G\neq \emptyset$.
\end{conjecture}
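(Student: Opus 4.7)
The plan is to proceed by a minimal-counterexample argument and reduce to the case of a non-abelian simple group, with $A_5$ as the essential base case. If $G$ is a counterexample of minimal order, then every proper subgroup of $G$ already fixes a point on every finite contractible $2$-complex on which it acts; Smith theory rules out $p$-groups, and a careful analysis of the $G/N$-action on the Smith fixed-point sets $X^N$ (for $N\trianglelefteq G$ a $p$-group) should force $G$ to be simple. Since $A_5$ is the smallest non-abelian simple group, a proof must start there.

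For $A_5$ specifically, assume for contradiction that a fixed-point-free finite contractible $A_5$-complex $X$ exists; after one barycentric subdivision, the $A_5$-CW structure is admissible. Because every proper subgroup of $A_5$ is solvable, Smith theory applied to its Sylow subgroups shows that each proper subgroup of $A_5$ fixes points in $X$, and hence the cell stabilizers of $X$ are all \emph{proper} subgroups of $A_5$. The equivariant cellular data of $X$, together with the orbifold fundamental group of $X/A_5$ computed via a Bass--Serre style decomposition, should now translate into a combinatorial presentation of $A_5$ of a very specific form, with generators coming from a spanning tree of the orbit space and from loops, and relators encoding the stabilizer orders together with the $2$-cell attaching words.

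The crux is then to show that no such presentation of $A_5$ can exist. Concretely, the problem should reduce to proving that $A_5$ does not admit a presentation of the form $\langle a,b,c,d,x_0,\ldots,x_k \mid a^2, b^3, c^2, d^2, (ab)^3, (bc)^2, (cd)^5, x_0 a x_0^{-1}=d, w_0,\ldots, w_k\rangle$ with each $w_i$ lying in the kernel of the obvious evaluation $F(a,b,c,d,x_0,\ldots,x_k)\to A_5$. This is the main obstacle: no Tietze manipulation can rule out such a presentation because the $w_i$ are allowed to be arbitrarily complicated words in the kernel. I expect the right tool to be the moduli space of representations into a compact Lie group, for instance $\SO(3)$ or $\PSL_2(\C)$; a mapping-degree or character-variety computation should provide an obstruction invisible to purely combinatorial methods.

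For the full conjecture, the strategy would be to extend the $A_5$ obstruction to all finite groups containing $A_5$, consistent with the paper's results for $\PSL_2(2^{2k})$, $\PSL_2(5^k)$ and certain $\PSL_2(q)$. Combined with a classification-theoretic reduction of minimal counterexamples to specific simple groups, this would eventually settle every case. The recurring obstacle in each case is the representation-variety input; the equivariant and combinatorial bookkeeping, while intricate, is in principle routine.
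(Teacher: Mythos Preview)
The statement you are attempting to prove is a \emph{conjecture}, and the paper does not prove it in general; it proves only the case $G=A_5$ (and, as a corollary, those $G$ in \cref{teoA} that contain $A_5$ as a subgroup). Your outline is therefore not a proof but a strategy, and in its broad strokes it matches the paper's own programme: reduce to a short list of simple groups, translate the problem via Brown's presentation into a statement about presentations of the form in \cref{theoremWi}, and then obstruct such presentations using a moduli of $\SO(3)$-representations and a mapping-degree argument. That is exactly what the paper carries out for $A_5$, and \cref{reduccion} records precisely the reduction you sketch.

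There are, however, two genuine gaps. First, your minimal-counterexample reduction is too optimistic: getting from an arbitrary finite $G$ down to a simple group is not just Smith theory on a normal $p$-subgroup, it is the content of Oliver--Segev's \cref{teoA,teoB}, and the simple groups that survive are the specific list in \cref{minimalSimpleGroupsOfOliverSegev}. Crucially, that list includes groups that do \emph{not} contain $A_5$ --- for instance $\Sz(2^p)$ and $\PSL_2(q)$ with $q\equiv\pm 3\pmod 5$ --- so ``extending the $A_5$ obstruction to all finite groups containing $A_5$'' cannot close the argument. Second, the representation-variety/degree input is not routine bookkeeping: the paper's argument for $A_5$ depends on an explicitly constructed three-parameter family of $\SO(3)$-representations of $\Gamma_k$ with a unique ``bad'' point $\zzbad$ and a delicate parity-of-degree computation (\cref{teoModuli,exactlyOneBad,atLeastTwo}). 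No analogue of this construction is known for the other minimal groups on the list, and producing one is exactly the open problem that stands between the $A_5$ case and the full conjecture.
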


In \cite{CD} the conjecture is proved for solvable groups.
Independently, Segev~\cite{Segev} studied the question of which groups act without fixed points on an acyclic $2$-complex and proved \cref{CasacubertaDicks} for solvable groups and the alternating groups $A_n$ for $n\geq 6$.
In \cite{SegevCollapsible}, Segev proved the conjecture for collapsible $2$-complexes.
Using the classification of the finite simple groups, Aschbacher and Segev proved that for many groups any action on a finite $2$-dimensional acyclic complex has a fixed point \cite{AschbacherSegev}.
Later, Oliver and Segev \cite{OS} gave a complete classification of the groups that act without fixed points on a finite acyclic $2$-complex.
Before \cite{OS}, $A_5$ was the only group known to act without fixed points on an acyclic $2$-complex.
An excellent exposition on this topic is the one given by Alejandro Adem at the Séminaire Bourbaki~\cite{Adem}.
In \cite{Corson}, J.M. Corson proved that \cref{CasacubertaDicks} holds for diagrammatically reducible complexes.
The smallest group for which \cref{CasacubertaDicks} remained open is the alternating group $A_5$.
The main result of this paper is the following.

\begin{thmrep}[\ref{theoremCDA5}]
\statementTheoremCDAFive
\end{thmrep}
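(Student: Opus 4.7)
The main theorem follows quickly from a structural characterization of the fundamental groups of hypothetical fixed-point-free $2$-dimensional finite acyclic $A_5$-complexes, namely the statement that for any such $Y$ the group $\pi_1(Y)$ is either infinite or admits an epimorphism onto $A_5$. Granting this, the plan is a direct contradiction: suppose that $A_5$ acts on a finite contractible $2$-complex $X$ without a global fixed point; since every contractible complex is acyclic, the characterization applies with $Y = X$, but $\pi_1(X) = 1$ is finite and manifestly does not surject onto the nontrivial group $A_5$.

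The substantive content is therefore the characterization itself, which I would prove by analysing the equivariant cellular structure of $Y$. Because $A_5$ is simple, every cell stabilizer is a proper subgroup, and combining this with the structural analysis of Oliver and Segev of fixed-point-free actions on finite acyclic $2$-complexes restricts the possible isotropy types to a short list of cyclic and dihedral subgroups of $A_5$. Choosing a suitable fundamental domain in $Y$ together with a spanning tree of the orbit space and invoking the standard complex-of-groups machinery, one obtains a presentation of $\pi_1(Y)$ whose generators $a, b, c, d$ record generators of vertex isotropy subgroups (mapping in $A_5$ to elements of orders $2, 3, 2, 2$), whose extra generators $x_0, \ldots, x_k$ record the remaining edges of $Y/A_5$, and whose relations consist of the isotropy relators $a^2, b^3, c^2, d^2, (ab)^3, (bc)^2, (cd)^5$, an edge-identification relation $x_0 a x_0^{-1} = d$ arising from an orbit edge whose endpoints have conjugate $\Z/2$-isotropies, and $2$-cell relators $w_0, \ldots, w_k$. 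By design every relator lies in the kernel of the tautological homomorphism $\phi\co F(a, b, c, d, x_0, \ldots, x_k) \to A_5$ described in the introductory statement, and since $\phi(a), \phi(b), \phi(c)$ together generate $A_5$, the map $\phi$ descends to an epimorphism $\pi_1(Y) \twoheadrightarrow A_5$, which is the content of the characterization.

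The main obstacle is carrying out this construction so that the presentation has precisely this form. It rests on two combinatorial ingredients: first, a sharp classification of the isotropy lattice of a hypothetical counterexample, which requires the Oliver--Segev analysis together with additional equivariant reductions such as equivariant collapses, so as to put $Y$ into a normal form with essentially one orbit of each essential isotropy type; and second, a careful choice of fundamental domain and spanning tree so that only the single conjugation relation $x_0 a x_0^{-1} = d$ is needed beyond the isotropy and $2$-cell relators. Once this combinatorial setup is in place, the contradiction with $\pi_1(X) = 1$ is automatic and the theorem follows. The stronger assertion displayed in the introduction --- that no presentation of $A_5$ of this precise form can exist --- is not required for the present theorem, but is instead the key ingredient for extending the result to the larger class of simple groups treated elsewhere in the paper, where it would plausibly be established by a representation-variety and mapping-degree analysis in the spirit of the paper's keywords.
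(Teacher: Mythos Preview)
Your proposal contains a fundamental error: the presentation with generators $a,b,c,d,x_0,\ldots,x_k$ obtained from Brown's complex-of-groups machinery is \emph{not} a presentation of $\pi_1(Y)$. It is a presentation of the extension $\wt{G}_Y$ sitting in
\[
1 \longrightarrow \pi_1(Y) \longrightarrow \wt{G}_Y \xrightarrow{\ \overline{\phi}\ } A_5 \longrightarrow 1,
\]
as stated in \cref{resultadoBrown}. The map $\phi$ you describe descends to the quotient map $\overline{\phi}\colon \wt{G}_Y \to A_5$, which is always a surjection by construction; it tells you nothing about whether $\pi_1(Y)$ (the \emph{kernel} of $\overline{\phi}$) surjects onto $A_5$. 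In particular, your argument would apply verbatim to any fixed-point-free acyclic $A_5$-complex, contractible or not, and would purport to produce a surjection $\pi_1(Y)\to A_5$ without ever using the hypothesis that $\pi_1(Y)$ is trivial.

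Consequently you have the logical dependencies inverted. In the paper, \cref{fundamentalGroupsA5} is a \emph{corollary} of the representation-variety and degree argument, not an input to it. The actual content of the proof is precisely the statement you dismiss as unnecessary: if $X$ is contractible then $\pi_1(X)=1$, hence $\wt{G}_X\cong A_5$, and one obtains a presentation of $A_5$ of the displayed form. \cref{theoremWi} asserts that no such presentation exists, and its proof requires the moduli of representations in $\SO(3)$ (\cref{teoModuli}) together with the quaternionic degree argument (\cref{atLeastTwo}). There is no shortcut via ``soft'' complex-of-groups bookkeeping; the hard analytic/topological input is essential.
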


From this, using the results of Oliver and Segev \cite{OS}, we deduce the following.

\begin{cororep}[\ref{CDForSimpleGroupsContainingA5}]
 \statementCDForSimpleGroupsContainingAFive
\end{cororep}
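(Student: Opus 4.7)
The plan is to reduce to \cref{theoremCDA5} by invoking the Oliver-Segev classification~\cite{OS} of finite groups admitting a fixed-point-free action on a finite acyclic $2$-complex. First I would observe that each of the listed groups contains an isomorphic copy of $A_5$: one has $A_5 \cong \PSL_2(4) \leq \PSL_2(2^{2k})$ via the subfield inclusion $\F_4 \subseteq \F_{2^{2k}}$; $A_5 \cong \PSL_2(5) \leq \PSL_2(5^k)$ analogously; and for $q \equiv \pm 3 \pmod 8$ together with $q \equiv \pm 1 \pmod 5$, Dickson's classification of subgroups of $\PSL_2(q)$ furnishes a copy of $A_5$ (the condition $q \equiv \pm 1 \pmod 5$ is precisely what guarantees $5 \mid |\PSL_2(q)|$, and this is the obstruction to finding $A_5$).

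Next, arguing by contradiction, assume that such a $G$ acts without fixed points on a finite contractible $2$-complex $X$. After a barycentric subdivision, I may assume the action is admissible. Since $X$ is contractible it is in particular acyclic, so the Oliver-Segev structure theorem for fixed-point-free $G$-actions on finite $2$-dimensional acyclic complexes applies. The crucial fact I would extract from their work is that, for each of the three families of groups in the statement, the possible cell stabilizers appearing in such an action do not contain any subgroup isomorphic to $A_5$. Granting this, the $A_5 \leq G$ produced in the first step acts on $X$ without fixing any cell (a cell fixed by $A_5$ would have stabilizer containing $A_5$), so the restricted $A_5$-action on the finite contractible $2$-complex $X$ is fixed-point-free, contradicting \cref{theoremCDA5}.

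The main obstacle lies in the precise extraction from \cite{OS}: one must inspect, case by case for the three families, the description of the admissible isotropy patterns for fixed-point-free $G$-actions on a finite acyclic $2$-complex, and verify in each case that no admissible isotropy subgroup contains $A_5$. Since in the $\PSL_2(q)$ setting the relevant isotropy subgroups in the Oliver-Segev framework are solvable (Borel subgroups and dihedral normalizers of tori), verifying the absence of $A_5$ should amount to a short group-theoretic check once the correct statement is located in \cite{OS}; nevertheless, this bookkeeping is where the substantive content of the corollary sits.
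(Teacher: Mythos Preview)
Your proposal is correct and follows essentially the same route as the paper's proof. Both arguments (i) exhibit a copy of $A_5$ inside $G$ (the paper cites \cite[Proposition~3.3]{OS}; you invoke the subfield embedding and Dickson's list), (ii) show that in any fixed-point-free action of $G$ on a finite acyclic $2$-complex the isotropy subgroups cannot contain $A_5$, and (iii) conclude via \cref{theoremCDA5}. The only difference is packaging: where you propose a case-by-case inspection of the Oliver--Segev isotropy patterns, the paper invokes \cref{lemmaNonsolvableSubgroup}, which uses \cite[Proposition~6.4]{OS} to show uniformly that the isotropy family is exactly $\SLV$---so every stabilizer is solvable and in particular contains no $A_5$. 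This makes the ``bookkeeping'' you flag as the main obstacle a one-line citation rather than three separate checks.
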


Our proof of \cref{theoremCDA5} goes by constructing a nontrivial representation in $\SO(3,\R)$ of the fundamental group of every 
fixed point free, $2$-dimensional, finite acyclic $A_5$-complex. Therefore we have the following.

\begin{cororep}[\ref{fundamentalGroupsA5}]
 \statementCharacterizationFundamentalGroupsAcyclicAFive
\end{cororep}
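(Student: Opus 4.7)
The plan follows the strategy announced in the introduction: construct, for every fixed point free, $2$-dimensional, finite acyclic $A_5$-complex $X$ with $\pi=\pi_1(X)$, a nontrivial homomorphism $\rho\co\pi\to\SO(3,\R)$ that is compatible with the $A_5$-action on $X$, and then derive the dichotomy of the corollary from it.

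Granting $\rho$, assume $\pi$ is finite (otherwise there is nothing to prove). Then $\rho(\pi)$ is a finite subgroup of $\SO(3,\R)$, hence isomorphic to one of $C_n,\,D_n,\,A_4,\,S_4,\,A_5$. Compatibility with the $A_5$-action will be built in so that $\rho$ is equivariant with respect to the outer $A_5$-action on $\pi$ induced by the action on $X$ and the conjugation action of a fixed icosahedral embedding $\iota\co A_5\hookrightarrow\SO(3,\R)$; this equivariance forces $\rho(\pi)$ to be normalized by $\iota(A_5)$. Inspecting the normalizers of the finite subgroups of $\SO(3,\R)$ shows that among them only the trivial subgroup and $\iota(A_5)$ are normalized by $\iota(A_5)$, since the subgroups $C_n,\,D_n,\,A_4,\,S_4$ have normalizers of order strictly less than $60$ and the icosahedral group is self-normalizing. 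Nontriviality of $\rho$ therefore forces $\rho(\pi)=\iota(A_5)\cong A_5$, giving the required epimorphism $\pi\twoheadrightarrow A_5$.

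For the construction of $\rho$ itself, the plan is to choose an $A_5$-equivariant CW structure on $X$, extract a presentation of the orbifold fundamental group $\pi_1^{\mathrm{orb}}(X/A_5)$ (which sits in an extension $1\to\pi\to\pi_1^{\mathrm{orb}}(X/A_5)\to A_5\to 1$), and build a homomorphism $\wt\rho\co\pi_1^{\mathrm{orb}}(X/A_5)\to\SO(3,\R)$ extending $\iota$ on $A_5$; the restriction $\wt\rho|_\pi$ is then automatically $A_5$-equivariant. Because the action is fixed point free, every cell stabilizer is a proper subgroup of $A_5$ and so embeds canonically into $\iota(A_5)\subset\SO(3,\R)$, which pins down $\wt\rho$ on the stabilizer generators; the free parameters are the images of the edge-translation generators encoding the combinatorics of $X/A_5$, and these parameters together with the $2$-cell relations carve out a real algebraic variety $R$ of admissible $\wt\rho$. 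The main obstacle is to exhibit a point of $R$ for which $\wt\rho|_\pi$ is nontrivial. I would attack this via a mapping-degree argument (foreshadowed by the paper's keywords): assemble the $\wt\rho$-equivariant map $\wt X\to\SO(3,\R)$ and argue that triviality of $\wt\rho|_\pi$ would force its image into a single $A_5$-orbit $\iota(A_5)/H$, producing an $A_5$-equivariant map from $X$ to a finite $A_5$-set and hence a global fixed point on $X$, contradicting the hypothesis. Carrying out this degree computation while controlling equivariance and basepoints is where I expect the delicate combinatorial and representation-variety analysis of the paper to enter.
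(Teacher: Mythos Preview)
Your overall architecture---build a nontrivial $\rho\colon\pi\to\SO(3,\R)$ by first representing the extension $\wt G_X$ (what you call the orbifold fundamental group) and then read off the dichotomy---is the paper's strategy, but two points deserve correction.

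\textbf{The deduction step is much simpler than you make it.} You invoke equivariance of $\rho$ so that $\rho(\pi)$ is normalized by an icosahedral $\iota(A_5)$, and then run through the normalizers of finite rotation groups. This is unnecessary and, as written, not quite correct: the extension $1\to\pi\to\wt G_X\to A_5\to 1$ need not split, so there is no canonical copy of $A_5$ inside $\wt G_X$ for $\wt\rho$ to ``extend $\iota$ on''; what you actually get is that $\rho(\pi)$ is normalized by $\wt\rho(\wt G_X)$, not by a fixed $\iota(A_5)$. The paper bypasses all of this with one observation you omit: since $X$ is acyclic, $H_1(X)=0$, so $\pi$ is \emph{perfect}. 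Hence any homomorphic image $\rho(\pi)$ is perfect, and the only nontrivial finite perfect subgroup of $\SO(3,\R)$ is $A_5$. That is the entire proof of the corollary once the existence of a nontrivial $\rho$ is granted.

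\textbf{The existence of $\rho$ is where the real work lies, and your sketch of the degree argument does not match what is actually done.} Your heuristic---``triviality of $\wt\rho|_\pi$ would force the image into a single $A_5$-orbit $\iota(A_5)/H$, hence produce a fixed point on $X$''---does not obviously make sense and is not how the paper proceeds. The paper first reduces (via equivariant collapses and expansions) to a standard $1$-skeleton $\Gamma_{OS}(A_5)$ with $k$ extra free edge-orbits, then writes down an explicit $3$-parameter family of representations of $\wt G_{\Gamma}$ into $\SO(3,\C)$, lifts to quaternions, and shows by a \emph{parity} computation that the map recording the values of the relators has even degree on the boundary of the parameter cube but local degree $\pm1$ at the unique ``bad'' parameter (the one giving the trivial restriction to $\pi$). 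This forces a second zero, i.e.\ a representation with $\rho|_\pi$ nontrivial. Your outline of ``representation variety plus mapping degree'' is in the right spirit, but the mechanism is this parity/local-degree mismatch, not an equivariant-map-to-finite-orbit argument.
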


The paper is organized as follows.
In \cref{sectionReduction} we prove \cref{Refinement} which says that to prove \cref{theoremCDA5} it is enough to inspect the acyclic complexes of the type considered by Oliver and Segev in \cite{OS}.
The necessary results from \cite{OS} are recalled in \cref{sectionOS}.
In \cref{sectionReduction} we also prove \cref{reduccion}, which describes a possible path towards settling \cref{CasacubertaDicks}.

In \cref{sectionBrown} we establish the connection between \cref{theoremCDA5} and the following group theoretic statement, using a result of Kenneth S. Brown \cite{BrownPresentations} in Bass--Serre theory.

\begin{thmrep}[\ref{theoremWi}]
\statementTheoremWi
\end{thmrep}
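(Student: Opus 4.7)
The plan is to derive a contradiction from the assumption that the group $\pi$ given by the presentation is isomorphic to $A_5$. Since each relator lies in $\ker\phi$, the map $\phi$ descends to an epimorphism $\bar\phi\colon \pi \to A_5$; if $\pi \cong A_5$ then $\bar\phi$ is an isomorphism. Every non-trivial homomorphism $A_5 \to \SO(3,\R)$ is conjugate to the standard icosahedral embedding $\rho_0$, so any homomorphism $\sigma\colon \pi \to \SO(3,\R)$ factors, up to $\SO(3,\R)$-conjugation, as $\rho_0\circ\bar\phi$ and therefore sends $x_0$ to $\rho_0(\phi(x_0)) = I$. Hence constructing a representation $\sigma$ of $\pi$ with $\sigma(x_0)\neq I$ will yield the contradiction.

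Setting $A=\rho_0\phi(a)$, $B=\rho_0\phi(b)$, $C=\rho_0\phi(c)$, $D=\rho_0\phi(d)=A$, I consider the real algebraic variety $\mathcal{V}_0\subset\SO(3,\R)^{k+1}$ of tuples $(X_0',X_1',\ldots,X_k')$ satisfying $X_0' A (X_0')^{-1}=A$. Since the centralizer $C_{\SO(3,\R)}(A)$ is a circle $S^1$, $\mathcal{V}_0$ is the compact analytic manifold $S^1\times\SO(3,\R)^k$ of dimension $1+3k$. Each point of $\mathcal{V}_0$ defines a representation into $\SO(3,\R)$ of the partial group (with only the fixed relations imposed) by sending $a,b,c,d$ to $A,B,C,D$ and $x_i$ to $X_i'$. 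The standard point $v_0=(I,I,\ldots,I)$ corresponds to $\rho_0\circ\phi$, and $w_i(v_0)=I$ for every $i$ because $w_i\in\ker\phi$. The task thus reduces to exhibiting $v\in\mathcal{V}_0$ near $v_0$ with $X_0'\neq I$ and $w_i(v)=I$ for every $i$.

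To locate such a $v$, I analyze the differential at $v_0$ of the evaluation map $W\colon\mathcal{V}_0\to\SO(3,\R)^{k+1}$, $v\mapsto(w_0(v),\ldots,w_k(v))$, via Fox calculus. Identifying $T_I\SO(3,\R)$ with $\mathfrak{so}(3)$, $dW\vert_{v_0}$ is computed from the Fox derivatives of the relators evaluated in $\Z[A_5]$ through $\rho_0\phi$ and acting on $\mathfrak{so}(3)$. The rigidity of $\rho_0$, expressed as $H^1(A_5;\mathfrak{so}(3)^{\rho_0})=0$, imposes structural constraints on the image of $dW\vert_{v_0}$, while the $3$-dimensional $\SO(3,\R)$-conjugation action contributes to its kernel alongside the $X_0'$-direction in $C_{\SO(3,\R)}(A)$. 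Once the requisite tangent direction is established, the implicit function theorem or a mapping-degree argument (exploiting the compactness of $\mathcal{V}_0$) produces the desired point in $Z:=W^{-1}(I,\ldots,I)$ with $X_0'\neq I$.

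The principal obstacle is the Fox-calculus verification that $T_{v_0}Z$ properly contains the $\SO(3,\R)$-conjugation orbit through $v_0$ along the $X_0'$-direction, equivalently that the cokernel of $dW\vert_{v_0}$ has dimension at least $3$. The dimension count must absorb the naive deficit $\dim\mathcal{V}_0-3(k+1)=-2$ while leaving a free direction for $X_0'$; this relies on careful bookkeeping of how the condition $w_i\in\ker\phi$ constrains the Fox derivatives against the fixed $A_5$-relations $a^2,\,b^3,\,c^2,\,d^2,\,(ab)^3,\,(bc)^2,\,(cd)^5$ and the identification $x_0ax_0^{-1}=d$, combined with the rigidity of $\rho_0$.
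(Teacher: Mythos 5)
Your high-level strategy is the paper's: assume the presented group $\pi$ is $A_5$, note that then $\overline{\phi}$ is an isomorphism so every $\SO(3,\R)$-representation of $\pi$ kills $x_0$, and derive a contradiction by constructing a representation that does not. But the construction you propose has a genuine gap that cannot be repaired within your framework: you freeze $a,b,c,d$ at the standard icosahedral representation and only let $x_0$ move in the centralizer circle $C_{\SO(3,\R)}(A)\cong S^1$ (plus $\SO(3,\R)^k$ for the other free letters). Your moduli $\mathcal{V}_0$ then has dimension $3k+1$, while the evaluation map $W=(w_0,\ldots,w_k)$ lands in a space of dimension $3(k+1)=3k+3$. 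With a source of strictly smaller dimension than the target there is no mapping degree, and the implicit function theorem gives you nothing: generically $W^{-1}(I,\ldots,I)$ is just the known point $v_0$. The deficit of $2$ is not "absorbed" by the condition $w_i\in\ker(\phi)$ or by rigidity of $\rho_0$; indeed rigidity ($H^1(A_5;\mathfrak{so}(3))=0$) works \emph{against} you, since it says that deformations staying inside representations of $A_5$ are only conjugations. A concrete failure: take $k=0$ and $w_0=x_0$. Then $\mathcal{V}_0=S^1$, $W(X_0')=X_0'$, and $W^{-1}(I)=\{I\}$ --- your method produces nothing, yet the quotient group here (an extension of $A_5$ by the binary icosahedral group, of order $7200$) must still be ruled out as being $A_5$.

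The paper's resolution is precisely to \emph{not} freeze $c$ and $d$: it builds a three-parameter family of representations of $\Gamma_k$ in which $C=\rotMat(\alpha_1,\beta_1)S_0\rotMat(\alpha_1,\beta_1)^T$ and $D$ are twisted off the icosahedral position by two extra circle parameters (Theorem \ref{teoModuli}), so that the source $[0,2\pi]^3\times(\D^3)^k$ and the target $(\D^3)^{k+1}$ have equal dimension $3(k+1)$. This makes a degree argument possible: the differential at the unique "universal" zero $\ttbad$ is invertible (local degree $\pm1$, Lemmas \ref{lemmaMxInvertible}--\ref{lemmaDifferentialW}, where the acyclicity hypothesis $N=\llangle w_i\rrangle^{\Gamma_k}[N,N]$ is what replaces your Fox-calculus bookkeeping), while a parity argument on the boundary (Lemma \ref{lemmaDegreeEvenGeneralized}, after normalizing the exponent matrix of the $w_j$ to the identity) forces the total degree to be even, hence a second zero exists. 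Two further points you would need even after fixing the dimensions: (a) the correct contradiction criterion is that the representation fails to kill \emph{some} element of $\{x_0,\ldots,x_k,(bac)^3\}$, since $\ker(\overline{\phi})$ is normally generated by these (Lemma \ref{lemaXandBAC3}) --- insisting on $x_0$ alone is too restrictive, as the $w_0=x_0$ example shows; and (b) the uniqueness statement of Proposition \ref{exactlyOneBad} is what guarantees the second zero is a \emph{nontrivial} representation of the quotient. None of these ingredients is reachable from the $S^1\times\SO(3,\R)^k$ moduli you set up.
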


In order to prove \cref{theoremWi}, in \cref{sectionModuli} we introduce a moduli of representations of the group 
$$\Gamma_k = \langle a,b,c,d,x_0, \ldots, x_k \mid a^2, b^3, c^2, d^2, (ab)^3, (bc)^2,(cd)^5, x_0 a x_0^{-1}=d \rangle$$
in $\SO(3)$.
In \cref{sectionQuaternions} we view these rotations in $S^3\subset \H$, enabling us to do a degree argument which completes the proof of \cref{theoremWi}.
This proof is inspired by James Howie's proof of the Scott--Wiegold conjecture~\cite{HowieScottWiegold}.
Finally, in \cref{sectionConsequences} we put everything together to complete the proof of \cref{theoremCDA5}.\medskip

\textbf{Note.} Some of the results presented here appeared originally in the author's thesis \cite{SadofschiCostaThesis}.

\medskip

\textbf{Acknowledgements.} 
I am grateful to my mentor, Jonathan Barmak, for his constant advice, in particular for suggesting me to work on this problem during my PhD.
I would like to thank Bob Oliver for his feedback on my thesis.
I would also like to thank Kevin Piterman for taking a look at a previous version of this article, and Yago Antolin, Ignacio Darago, Carlos Di Fiore, Fernando Martin and Pedro Tamaroff for valuable conversations.


\section[Fixed point free actions on acyclic 2-complexes]{Fixed point free actions on acyclic $2$-complexes}
\label{sectionOS}
In this section we review the results obtained by Bob Oliver and Yoav Segev in their article \cite{OS} that are needed later.

Throughout the paper, by $G$-complex we mean a $G$-CW complex.
That is, a CW complex with a continuous $G$-action that is \textit{admissible}  (i.e. the action permutes the open cells of $X$, and maps a cell to itself only via the identity).
For more details see \cite[Appendix A]{OS}.
We will frequently assume that the $2$-cells in a $G$-complex are attached along closed edge paths, this will make no difference for the questions that we study. 
A \textit{graph} is a $1$-dimensional CW complex.
By \textit{$G$-graph} we always mean a $1$-dimensional $G$-complex.

\begin{definition}[{\cite{OS}}]
A $G$-complex $X$ is \textit{essential} if there is no normal subgroup $1\neq N\triangleleft G$ such that for each $H\subseteq G$, the inclusion $X^{HN}\to X^H$ induces an isomorphism on integral homology.
\end{definition}

The main results of \cite{OS} are the following two theorems.

\begin{theorem}[{\cite[Theorem A]{OS}}]\label{teoA}
For any finite group $G$, there is an essential fixed point free $2$-dimensional (finite) acyclic $G$-complex if and only if $G$ is isomorphic to one of the simple groups $\PSL_2(2^k)$ for $k\geq 2$, $\PSL_2(q)$ for $q\equiv \pm 3 \pmod 8$ and $q\geq 5$, or $\Sz(2^k)$ for odd $k\geq 3$. Furthermore, the isotropy subgroups of any such $G$-complex are all solvable.
\end{theorem}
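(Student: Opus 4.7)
The plan is to prove both directions and extract the solvable-isotropy statement along the way. For the ``only if'' direction, I would start from an essential, fixed point free $2$-dimensional acyclic $G$-complex $X$ and derive structural restrictions on $G$. First, via an inductive argument using the Casacuberta--Dicks result for solvable groups together with P.A.~Smith theory on the fixed-point subcomplexes $X^H$, I would argue that every isotropy subgroup of $X$ is solvable: otherwise a non-solvable isotropy would yield a smaller counterexample on one of the fixed-point subcomplexes. The essentiality hypothesis then forces $G$ to be simple, since for any proper normal $N\lhd G$ one could show the comparison $X^{HN}\to X^H$ is a homology equivalence for every $H\leq G$, contradicting essentiality.

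The crucial next step is to invoke the classification of finite simple groups. For each candidate simple $G$ having only solvable isotropies, the Euler characteristic identity $\chi(X)=1$ combined with orbit counting produces a Diophantine constraint on the conjugacy classes of isotropy subgroups and their indices. A case-by-case analysis then shows this constraint admits solutions only when $G$ is one of $\PSL_2(2^k)$ ($k\geq 2$), $\PSL_2(q)$ with $q\equiv \pm 3 \pmod 8$, or $\Sz(2^k)$ with odd $k\geq 3$. These are precisely the simple groups whose maximal solvable subgroup lattice splits into a short list of conjugacy classes (e.g.\ dihedral groups $D_{q\pm 1}$ together with $A_4$, $S_4$, or $A_5$ for $\PSL_2(q)$, and analogous data for $\Sz$).

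For the ``if'' direction, for each listed group I would construct an explicit acyclic $G$-complex as follows: take a bipartite $G$-graph with vertex stabilizers in two selected conjugacy classes of maximal solvable subgroups, and attach $G$-orbits of $2$-cells stabilized by the relevant pairwise intersections in order to kill the first homology. Acyclicity is verified by a Mayer--Vietoris or direct Euler characteristic computation; essentiality and fixed-point freeness follow from the simplicity of $G$ together with the fact that the isotropies are maximal proper subgroups, so no nontrivial normal subgroup can act trivially on any $X^H$.

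The main obstacle is the classification step in the necessity direction: eliminating all remaining finite simple groups (the alternating groups $A_n$ for $n\geq 6$, the sporadic groups, and the other Chevalley and twisted families) is where the bulk of the work lies, since for each family one must rule out every possible configuration of solvable isotropies compatible with the numerical and essentiality constraints. Groups of Lie rank $\geq 2$ require particularly careful treatment of their parabolic subgroup structure to confirm that no consistent configuration is available.
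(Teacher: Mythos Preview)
The paper does not prove this statement at all: it is quoted verbatim as Theorem~A of Oliver--Segev \cite{OS} and used as a black box. Section~\ref{sectionOS} opens with ``In this section we review the results obtained by Bob Oliver and Yoav Segev in their article \cite{OS} that are needed later,'' and \cref{teoA} carries no proof environment. So there is no argument in the present paper to compare your proposal against.

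That said, a few points of your sketch do not match what Oliver--Segev actually do (and what is visible in the excerpts reproduced here). For the ``if'' direction, the explicit complexes (Propositions~\ref{example3.4}--\ref{example3.7}) are not built on a bipartite graph with two vertex orbits and $2$-cells stabilized by pairwise intersections: they have three or four vertex orbits with stabilizers the maximal solvable subgroups, three or four edge orbits with cyclic or $C_2^2$ stabilizers, and a single \emph{free} orbit of $2$-cells. For the ``only if'' direction, your step ``essentiality forces $G$ simple'' is too quick: one must first show that every proper normal $N\lhd G$ has $X^N\neq\emptyset$, and this is not immediate---Oliver--Segev obtain it through the separating-family formalism (Definition~\ref{defSeparating}, Lemma~\ref{lemma1.2}) rather than by a direct homology comparison. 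Likewise, the elimination of the remaining simple groups is organized around the existence of $2$-dimensional H-universal $(G,\mathcal{F})$-complexes and the invariant $i_{\mathcal{F}}$ (Lemma~\ref{lemma2.3}, Proposition~\ref{proposition6.4}), not around a raw Diophantine constraint from $\chi(X)=1$.
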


\begin{theorem}[{\cite[Theorem B]{OS}}]\label{teoB}
Let $G$ be any finite group, and let $X$ be any $2$-dimensional acyclic $G$-complex. Let $N$ be the subgroup generated by all normal subgroups $N'\triangleleft G$ such that $X^{N'}\neq \emptyset$. Then $X^N$ is acyclic; $X$ is essential if and only if $N=1$; and the action of $G/N$ on $X^N$ is essential.
\end{theorem}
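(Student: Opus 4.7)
The plan is to prove the three assertions in order, with the bulk of the work concentrated on (a). Write $\mathcal{F} = \{N' \triangleleft G : X^{N'} \neq \emptyset\}$, so that $N = \langle N' : N' \in \mathcal{F}\rangle$. The structural input is a \emph{merging lemma}: whenever $M_1, M_2 \in \mathcal{F}$ with $X^{M_1}$ and $X^{M_2}$ nonempty and acyclic, the product $M_1 M_2 \triangleleft G$ also lies in $\mathcal{F}$ and $X^{M_1 M_2} = X^{M_1}\cap X^{M_2}$ is nonempty and acyclic. Granting this and starting from $X^{\{1\}} = X$, an induction on the length of a finite product expression for $N$ (which exists since $G$ is finite) yields (a).

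To establish the merging lemma I would regard $M_2$ as acting on the acyclic $2$-complex $X^{M_1}$ through its image in $G/M_1$, so that the fixed set is exactly $X^{M_1M_2}$. Nonemptiness should follow from the combination of acyclicity of $X^{M_1}$, the normality of the acting subquotient in $G/M_1$, and a Smith-type fixed-point principle. Acyclicity would then be extracted from the Mayer--Vietoris sequence for the cover $\{X^{M_1}, X^{M_2}\}$ of $X^{M_1}\cup X^{M_2}$ inside the acyclic $2$-complex $X$, where the hypothesis that $X$ is $2$-dimensional sharply constrains the top-degree term $H_2(X^{M_1}\cup X^{M_2})$ and forces the sequence to collapse. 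This is the technical heart of the theorem and, by a clear margin, the principal obstacle, since classical Smith theory delivers only mod-$p$ acyclicity whereas the statement asks for integral acyclicity.

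For (b), if $N = 1$ then every $N'\neq 1$ normal in $G$ has $X^{N'}=\emptyset$ by maximality, so the inclusion $X^{N'}\hookrightarrow X$ already fails to be a homology isomorphism at $H=1$, and hence $X$ is essential. Conversely, if $N\neq 1$ I would show that $N$ itself witnesses non-essentiality, that is, $X^{HN}\hookrightarrow X^H$ induces isomorphisms on integral homology for every $H\subseteq G$. The plan is to reduce this to (a) through an $H$-relative version of the merging step, considering the action of $N\cap N_G(H)$ on the acyclic complex $X^H$ and using that $X^{HN}$ sits inside $X^H$ as the common fixed set.

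For (c), suppose for contradiction that some $\bar{M}\neq 1$ normal in $G/N$ witnessed non-essentiality of $G/N$ on $X^N$, and let $N\lneq M\triangleleft G$ be its preimage. The condition that $X^{HM}\hookrightarrow X^{HN}$ is a homology isomorphism for every $H\supseteq N$ extends to all $H\subseteq G$ by replacing $H$ with $HN$ and using $HM=(HN)M$; composing with the isomorphisms $X^{HN}\hookrightarrow X^H$ from (b) shows that $M$ itself witnesses non-essentiality of $X$. In particular $X^M\hookrightarrow X$ is a homology isomorphism, so $X^M\neq\emptyset$, forcing $M\in\mathcal{F}$ and therefore $M\subseteq N$, contradicting $M\supsetneq N$.
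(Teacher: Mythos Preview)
This statement is not proved in the present paper: it is quoted verbatim as \cite[Theorem B]{OS} in the section entitled ``Fixed point free actions on acyclic $2$-complexes,'' whose stated purpose is to \emph{review} results of Oliver and Segev needed later. There is therefore no proof here to compare your proposal against; any comparison would have to be with the original argument in \cite{OS}.

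That said, a few comments on the proposal itself. Your overall architecture is reasonable and is indeed close in spirit to how such statements are typically organised: reduce part (a) to a two-factor ``merging lemma'' and then induct. You also correctly identify the crux: showing that $X^{M_1}\cap X^{M_2}$ is \emph{nonempty} (acyclicity then follows, and your observation that $H_2$ of any subcomplex of a $2$-dimensional acyclic complex injects into $H_2(X)=0$ is exactly the leverage one needs to collapse Mayer--Vietoris in the top degree). However, the nonemptiness step is left as an appeal to an unspecified ``Smith-type fixed-point principle,'' and this is precisely where classical Smith theory does not apply directly: the acting quotient $M_2/(M_1\cap M_2)$ need not be a $p$-group or solvable. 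In the framework of this paper, the relevant input is \cref{AciclicoOVacio}, which for an arbitrary finite group only yields ``acyclic or empty''; ruling out ``empty'' is the actual content, and your outline does not supply it. Similarly, your plan for the converse in (b) relies on an ``$H$-relative merging step'' that is only gestured at: one needs $X^{HN}=X^H\cap X^N$ to be acyclic whenever $X^H$ is nonempty, and again the nonemptiness of this intersection is the real issue. Your argument for (c) is clean and essentially complete once (a) and (b) are in hand.
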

The following fundamental result of Segev \cite[Theorem 3.4]{Segev} will be used frequently, sometimes implicitly.
We state the version given in \cite{OS}.

\begin{theorem}[{\cite[Theorem 4.1]{OS}}]\label{AciclicoOVacio}
Let $X$ be any $2$-dimensional acyclic $G$-complex (not necessarily finite). Then $X^G$ is acyclic or empty, and is acyclic if $G$ is solvable.
\end{theorem}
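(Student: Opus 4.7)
The plan is to proceed by induction on $|G|$, with the trivial group as the base case. For the inductive step, the primary reduction exploits proper normal subgroups. If $1 \neq N \triangleleft G$ is a proper normal subgroup, the inductive hypothesis applied to the $N$-action yields that $X^N$ is acyclic or empty (and acyclic when $N$ is solvable). If $X^N = \emptyset$ then $X^G = \emptyset$; otherwise $X^N$ is itself a finite-dimensional acyclic $2$-complex carrying an action of $G/N$, a group of smaller order, and a second application of the inductive hypothesis gives the conclusion for $X^G = (X^N)^{G/N}$. Since solvability is preserved under subgroups and quotients, the integral acyclicity strengthening propagates through this reduction. The problem is thereby reduced to the case where $G$ is simple.

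For $G$ simple there are two subcases. When $G \cong \Z/p$ (the solvable case), Smith theory applies: since $X$ is in particular $\Z/p$-acyclic, so is $X^G$. The two-dimensional hypothesis then forces $H_2(X^G;\Z) = 0$, because the top integral homology of a $2$-complex is free abelian and vanishes modulo $p$; connectedness of $X^G$ follows from mod-$p$ connectedness; and the universal coefficient theorem identifies $H_1(X^G;\Z)$ with a finite abelian group of order coprime to $p$. When $G$ is nonabelian simple, only the weaker "acyclic or empty" conclusion is required, and one can combine the $\Z/p$-acyclicity of $X^P$ for Sylow $p$-subgroups $P$ with the fact that $G$ is generated by its Sylows.

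The hard part I foresee is closing the cyclic prime-order case, namely eliminating the residual coprime-to-$p$ torsion in $H_1(X^G;\Z)$, since elementary Smith theory supplies only mod-$p$ information. A natural route is a localization or transfer argument for the quotient map $X \to X/G$ that leverages the integral acyclicity of $X$, not merely its mod-$p$ acyclicity, combined with the two-dimensional hypothesis to constrain the relevant chain complexes or spectral sequences so severely that no nontrivial torsion can persist. By contrast, the nonabelian simple case does not require such a delicate upgrade, because the empty-fixed-set alternative is always available.
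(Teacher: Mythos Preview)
This theorem is not proved in the paper at all: it is cited from \cite{OS} (and attributed originally to Segev) and used as a black box throughout. There is therefore no proof here to compare against, so I evaluate your proposal on its own terms.

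The inductive reduction via a proper normal subgroup $N\triangleleft G$ is correct and does reduce the question to simple $G$, with the solvable strengthening reducing to $G\cong\Z/p$. Your handling of the resulting base cases, however, has real gaps. For $G\cong\Z/p$, Smith theory gives $\Z/p$-acyclicity of $X^G$, and your deductions about $H_2$ and connectedness are fine; but the claim that $H_1(X^G;\Z)$ is a \emph{finite} abelian group of order coprime to $p$ already presumes $X$ is a finite complex, which the theorem explicitly does not assume. For infinite $X$ all that Smith theory yields is that $H_1(X^G;\Z)$ is $p$-divisible and $p$-torsion-free, which does not even force it to be a torsion group. Even granting finiteness, you correctly flag killing the coprime-to-$p$ part as unfinished, and the suggested ``localization or transfer'' route is too vague to constitute an argument. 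For nonabelian simple $G$, the proposal to ``combine the $\Z/p$-acyclicity of $X^P$ for Sylow $p$-subgroups with the fact that $G$ is generated by its Sylows'' is not a proof: the subcomplexes $X^P$ for different primes are unrelated, and you give no mechanism to transport information between $H_*(X^G)$ and $H_*(X^P)$ across primes. Both base cases are genuine gaps. (As an aside: $H_2(X^H;\Z)=0$ for every subgroup $H$ follows directly at the chain level, since for a $2$-complex $H_2(X^H)=Z_2(X^H)\subset Z_2(X)=H_2(X)=0$; Smith theory is not needed for that part, and this observation holds uniformly for all $H$, not just cyclic ones.)
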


We denote the set of subgroups of $G$ by $\SS(G)$.

\begin{definition}[{\cite{OS}}]\label{defSeparating}
By a \textit{family} of subgroups of $G$ we mean any subset $\FF\subseteq\SS(G)$ which is closed under conjugation.
A nonempty family is said to be \textit{separating} if it has the following three properties: (a) $G\notin \FF$; (b) if $H'\subseteq H$ and $H\in \FF$ then $H'\in \FF$; (c) for any $H\triangleleft K\subseteq G$ with $K/H$ solvable, $K\in \FF$ if $H\in \FF$.

For any family $\FF$ of subgroups of $G$, a \textit{$(G,\FF)$-complex} is a $G$-complex all of whose isotropy subgroups lie in $\FF$.
A $(G,\FF)$-complex is \textit{universal} (resp. H-\textit{universal}) if the fixed point set of each $H\in \FF$ is contractible (resp. acyclic).
\end{definition}

If $G$ is not solvable, the separating family of solvable subgroups of $G$ is denoted by $\SLV$.
If $G$ is perfect, then the family of proper subgroups of $G$ is denoted by $\mathcal{MAX}$.

\begin{lemma}[{\cite[Lemma 1.2]{OS}}]\label{lemma1.2}
Let $X$ be any $2$-dimensional acyclic $G$-complex without fixed points.
Let $\FF$ be the set of subgroups $H\subseteq G$ such that $X^H\neq \emptyset$.
Then $\FF$ is a separating family of subgroups of $G$, and $X$ is an {\normalfont H}-universal $(G,\FF)$-complex.
\end{lemma}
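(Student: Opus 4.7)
The plan is to verify each clause of \cref{defSeparating} in turn, leveraging \cref{AciclicoOVacio} (Segev's theorem) as the one non-trivial input. Throughout, I will use the elementary facts that $X^{gHg^{-1}} = g\cdot X^H$, that $X^{H'}\supseteq X^H$ whenever $H'\subseteq H$, and that if $H\triangleleft K$ then $K/H$ acts on $X^H$.

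First I would show $\FF$ is a family: closure under conjugation is immediate since $g\cdot X^H = X^{gHg^{-1}}$ is nonempty whenever $X^H$ is, and non-emptiness of $\FF$ follows from $\{1\}\in\FF$ (because $X$ is acyclic, hence nonempty). Condition (a) of separating holds by hypothesis, since $X^G=\emptyset$ says $G\notin\FF$. Condition (b) is also immediate: if $H\in\FF$ and $H'\subseteq H$, then $\emptyset\neq X^H\subseteq X^{H'}$, so $H'\in\FF$.

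The one step that uses a real theorem is condition (c). Suppose $H\triangleleft K\subseteq G$ with $K/H$ solvable and $H\in\FF$. Applying \cref{AciclicoOVacio} to the action of $H$ on $X$, the set $X^H$ is either acyclic or empty; since $H\in\FF$ it is nonempty, hence acyclic. Now $K/H$ acts on $X^H$, and $X^H$ is a $2$-dimensional acyclic complex; applying \cref{AciclicoOVacio} again, this time to the solvable group $K/H$ acting on $X^H$, we conclude $(X^H)^{K/H}=X^K$ is acyclic and in particular nonempty, so $K\in\FF$.

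Finally, the two claims about $X$ itself: for any $x\in X$ the isotropy subgroup $G_x$ satisfies $x\in X^{G_x}$, so $G_x\in\FF$ and $X$ is a $(G,\FF)$-complex; and for any $H\in\FF$ the set $X^H$ is acyclic by the same application of \cref{AciclicoOVacio} used above, so $X$ is H-universal. No real obstacle is expected here — the whole argument is a bookkeeping of the definitions once Segev's theorem is invoked twice, and the only thing to be a little careful about is that in verifying (c) one must apply \cref{AciclicoOVacio} first to $H\curvearrowright X$ to get that $X^H$ is itself an acyclic $2$-complex before feeding it to the $K/H$-action.
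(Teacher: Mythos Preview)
Your proof is correct. Note that the paper itself does not give a proof of this lemma --- it is quoted directly from \cite[Lemma 1.2]{OS} --- so there is nothing to compare against; your argument is exactly the standard one, and the only subtlety (that $X^H$ is again a $2$-dimensional acyclic $K/H$-complex before invoking \cref{AciclicoOVacio} a second time) is the point you flagged.
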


\begin{proposition}[{\cite[Proposition 6.4]{OS}}]\label{proposition6.4}
 Assume that $L$ is one of the simple groups $\PSL_2(q)$ or $\Sz(q)$, where $q=p^k$ and $p$ is prime ($p=2$ in the second case).
 Let $G\subseteq\Aut(L)$ be any subgroup containing $L$, and let $\FF$ be a separating family for $G$.
 Then there is a $2$-dimensional acyclic $(G,\FF)$-complex if and only if $G=L$, $\FF=\SLV$, and $q$ is a power of $2$ or $q\equiv \pm 3 \pmod 8$. 
\end{proposition}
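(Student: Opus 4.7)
The plan is to reduce this to \cref{teoA}, \cref{teoB}, and \cref{AciclicoOVacio}, using the structural fact that for $L$ nonabelian simple with $L\subseteq G\subseteq \Aut(L)$, the group $L$ is the unique minimal normal subgroup of $G$. The ``if'' direction is immediate: under the stated congruences on $q$, \cref{teoA} produces an essential fixed-point-free $2$-dimensional acyclic $L$-complex all of whose isotropy subgroups are solvable, which is in particular a $(L, \SLV)$-complex.

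For the ``only if'' direction, let $X$ be a $2$-dimensional acyclic $(G, \FF)$-complex; following \cref{lemma1.2}, I identify $\FF$ with the separating family $\{H\subseteq G : X^H\neq\emptyset\}$. Since $G\notin\FF$, we have $X^G=\emptyset$. Applying \cref{teoB}, let $N\triangleleft G$ be the subgroup generated by all $N'\triangleleft G$ with $X^{N'}\neq\emptyset$; then $X^N$ is acyclic and $G/N$ acts essentially on it. Because $L$ is nonabelian simple, every nontrivial normal subgroup of $G$ contains $L$, so either $N=1$ or $N\supseteq L$. In the latter case, $G/N$ is a quotient of $G/L\hookrightarrow \Out(L)$, and both $\Out(\PSL_2(q))$ and $\Out(\Sz(q))$ are solvable; hence $G/N$ is solvable, and \cref{AciclicoOVacio} applied to its action on the nonempty acyclic complex $X^N$ yields $X^G=(X^N)^{G/N}\neq\emptyset$, a contradiction. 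Therefore $N=1$, i.e.\ $X$ is essential.

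Now \cref{teoA} forces $G$ to be one of its listed simple groups, and since $L\triangleleft G$ is nontrivial with $G$ simple, we must have $G=L$; the congruence condition on $q$ is exactly what \cref{teoA} allows for $\PSL_2(q)$ and is automatic for $\Sz(2^k)$. The equality $\FF=\SLV$ then splits into two inclusions. By the last sentence of \cref{teoA} every $H\in\FF$ fixes a point whose stabilizer is solvable, so $H$ is solvable and $\FF\subseteq\SLV$; conversely, for any $H\in\SLV$, applying \cref{AciclicoOVacio} to the $H$-action on the acyclic complex $X$ gives $X^H$ acyclic and nonempty, so $H\in\FF$.

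The principal obstacle is the case $N\supseteq L$ in the above analysis: its resolution rests on the solvability of $\Out(L)$ for both infinite families of simple groups under consideration, which is the structural ingredient that lets \cref{AciclicoOVacio} produce the contradicting fixed point. Once this is in hand, the remainder is bookkeeping: \cref{teoA} pins down $G$ and the arithmetic condition on $q$, while the $\FF=\SLV$ equality reduces to two straightforward applications of \cref{AciclicoOVacio} and \cref{teoA}.
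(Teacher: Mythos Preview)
This proposition is not proved in the present paper; it is quoted from \cite[Proposition~6.4]{OS} in the background section \cref{sectionOS} with no accompanying argument, so there is no proof here to compare your attempt against.

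That said, two issues with your argument are worth flagging. First, you derive the result from \cref{teoA} and \cref{teoB}, which are the \emph{main theorems} of \cite{OS}. In that paper Proposition~6.4 is one of the technical ingredients feeding into the proofs of Theorems~A and~B, so your deduction, while formally valid once those theorems are taken as given, is circular as an independent proof of the proposition.

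Second, there is a genuine gap at the step ``following \cref{lemma1.2}, I identify $\FF$ with the separating family $\{H\subseteq G : X^H\neq\emptyset\}$.'' \Cref{lemma1.2} starts from a fixed-point-free acyclic $G$-complex $X$ and \emph{outputs} the family $\FF':=\{H:X^H\neq\emptyset\}$; it does not assert that an arbitrary separating family $\FF$ for which $X$ is a $(G,\FF)$-complex must equal $\FF'$. In general one only has $\FF'\subseteq\FF$ (isotropy subgroups lie in $\FF$, which is closed under passing to subgroups), and nothing rules out $\FF$ containing subgroups that fix no point of $X$. Your argument for $\FF\subseteq\SLV$ then uses that every $H\in\FF$ fixes a point, which is exactly what is not granted. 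In the paper's own applications the proposition is only ever invoked with $\FF=\{H:X^H\neq\emptyset\}$, so this subtlety never surfaces there.
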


\begin{lemma}\label{lemmaNonsolvableSubgroup}
 Let $G$ be one of the groups in \cref{teoA} and let $X$ be a fixed point free $2$-dimensional acyclic $G$-complex.
 If $K\leq G$ is not solvable then the action of $K$ on $X$ is fixed point free.
 \begin{proof}
 Let $\FF=\{H\leq G \tq X^H\neq \emptyset \}$.
Then, by \cref{lemma1.2}, $\FF$ is a separating family and $X$ is an {\normalfont H}-universal $(G,\FF)$-complex.
By \cref{proposition6.4}, we must have $\FF=\mathcal{SLV}$ and therefore $X^K=\emptyset$.
 \end{proof}
\end{lemma}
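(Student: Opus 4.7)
The plan is to exploit the classification of H-universal $(G,\mathcal{F})$-complexes provided by \cref{proposition6.4}, which forces the family of isotropy subgroups to be exactly $\mathcal{SLV}$ for every group $G$ listed in \cref{teoA}.

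First I would form the family $\mathcal{F}=\{H\leq G \tq X^H\neq\emptyset\}$ associated to the given action. The hypothesis that $X$ is fixed point free means precisely that $G\notin\mathcal{F}$, and the acyclicity of $X$ together with $2$-dimensionality puts us in the setting of \cref{lemma1.2}, which immediately tells us that $\mathcal{F}$ is a separating family of subgroups of $G$ and that $X$ is an H-universal $(G,\mathcal{F})$-complex.

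Next I would invoke \cref{proposition6.4}. Since $G$ is one of the simple groups appearing in \cref{teoA}, we have $L=G\subseteq \Aut(L)$, so the hypotheses of \cref{proposition6.4} are satisfied by the pair $(G,\mathcal{F})$. Because there does exist a $2$-dimensional acyclic $(G,\mathcal{F})$-complex (namely $X$ itself), the proposition forces $\mathcal{F}=\mathcal{SLV}$, i.e.\ every subgroup that fixes a point of $X$ must be solvable.

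The conclusion is then immediate: if $K\leq G$ is not solvable, then $K\notin\mathcal{SLV}=\mathcal{F}$, which by definition of $\mathcal{F}$ means $X^K=\emptyset$. There is no real obstacle here since the work has already been done in \cite{OS}; the only thing to verify carefully is that the groups listed in \cref{teoA} are indeed covered by the hypothesis of \cref{proposition6.4}, which is clear since each is of the form $\PSL_2(q)$ or $\Sz(q)$ with the appropriate restriction on $q$.
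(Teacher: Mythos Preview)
Your proof is correct and follows exactly the same approach as the paper: form the family $\mathcal{F}$ of subgroups with nonempty fixed point set, apply \cref{lemma1.2} to see it is separating, and then invoke \cref{proposition6.4} to force $\mathcal{F}=\mathcal{SLV}$. The only difference is that you spell out in more detail why the hypotheses of \cref{proposition6.4} apply (taking $L=G$) and why $K\notin\mathcal{SLV}$ gives $X^K=\emptyset$, but the argument is identical.
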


If $X$ is a poset, then  $\K(X)$ denotes the \textit{order complex} of $X$, that is, the simplicial complex with simplices the finite nonempty totally ordered subsets of $X$ (the complex $\K(X)$ is also known as the \textit{nerve} of $X$).

\begin{definition}[{\cite[Definition 2.1]{OS}}]
For any family $\FF$ of subgroups of $G$ define 
$$i_\FF(H)=\frac{1}{[N_G(H):H]}(1-\chi(\K(\FF_{>H}))).$$
\end{definition}

Recall that if $G\actson X$, the orbit $G\cdot x$ is said to be \textit{of type $G/H$} if the stabilizer $G_x$ is conjugate to $H$ in $G$.
In other words, if the action of $G$ on $G\cdot x$ is the same as the action of $G$ on $G/H$.

\begin{lemma}[{\cite[Lemma 2.3]{OS}}]\label{lemma2.3}
Fix a separating family $\FF$, a finite {\normalfont H}-universal $(G,\FF)$-complex $X$, and a subgroup $H\subseteq G$.
For each $n$, let $c_n(H)$ denote the number of orbits of $n$-cells of type $G/H$ in $X$. Then $i_\FF(H)=\sum_{n\geq 0} (-1)^nc_n(H)$.
\end{lemma}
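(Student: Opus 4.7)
The plan is to compute $\chi(X^H)$ in two different ways and then extract $\chi_H := \sum_n (-1)^n c_n(H)$ by induction in the poset $\FF$, going from maximal subgroups downward. (The interesting case is $H \in \FF$; when $H \notin \FF$ one has $c_n(H) = 0$ for all $n$ and the formula degenerates modulo a convention for $\chi$ of the empty complex.)

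First I would count the cells of $X^H$ orbit-by-orbit. An $n$-cell $\sigma$ of $X$ lies in $X^H$ exactly when $H \subseteq G_\sigma$, so for a $G$-orbit of $n$-cells of type $G/K$ the number of $H$-fixed cells in it is $|(G/K)^H|$. Using that the map $gK \mapsto gKg^{-1}$ from $G/K$ to the conjugacy class of $K$ is $[N_G(K):K]$-to-one, this equals $[N_G(K):K] \cdot \#\{K' \sim K : K' \supseteq H\}$. Grouping the resulting double sum (over conjugacy classes of $K$ and then over conjugates $K'$ containing $H$) as a single sum over subgroups $K \in \FF_{\geq H}$ gives
$$\chi(X^H) = \sum_{K \in \FF_{\geq H}} \chi_K \cdot [N_G(K):K], \qquad \chi_K := \sum_n (-1)^n c_n(K).$$

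Next I would invoke H-universality: since $X^H$ is acyclic whenever $H \in \FF$, $\chi(X^H) = 1$. Solving for the top-most term of the sum,
$$\chi_H \cdot [N_G(H):H] = 1 - \sum_{K \in \FF_{>H}} \chi_K \cdot [N_G(K):K].$$
I then proceed by induction on $H$ in $\FF$, from maximal elements downward. The base case is $H$ maximal in $\FF$: then $\FF_{>H} = \emptyset$, $\chi(\K(\emptyset)) = 0$, and both sides equal $1$. For the inductive step, substituting the hypothesis $\chi_K \cdot [N_G(K):K] = 1 - \chi(\K(\FF_{>K}))$ into each term reduces the lemma to the purely combinatorial identity
$$\sum_{K \in \FF_{>H}} \bigl(1 - \chi(\K(\FF_{>K}))\bigr) = \chi(\K(\FF_{>H})).$$

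This identity is a clean telescoping. Since $\K(\FF_{\geq K})$ is a cone with apex $K$, $\chi(\K(\FF_{\geq K})) = 1$, hence $1 - \chi(\K(\FF_{>K}))$ is the signed count of the simplices of $\K(\FF_{\geq K})$ containing the apex, namely of chains $K = L_0 < L_1 < \cdots < L_m$ in $\FF$ (with sign $(-1)^m$). Summing over $K \in \FF_{>H}$ and re-indexing by the minimum $L_0 = K$, one recovers precisely the nonempty chains of $\FF_{>H}$, with the very same sign convention that defines $\chi(\K(\FF_{>H}))$. The main obstacle I anticipate is keeping sign conventions straight across the two distinct uses of the Euler characteristic --- one over $G$-orbits of cells of $X$, the other over chains in the poset $\FF$ --- but the essential geometric input is H-universality together with the contractibility of the cones $\K(\FF_{\geq K})$.
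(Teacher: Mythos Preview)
The paper does not prove this lemma: it is quoted from Oliver--Segev \cite[Lemma 2.3]{OS} as a black box, with no proof given in the present article. So there is no ``paper's own proof'' to compare your attempt against.

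That said, your argument is correct and is essentially the standard one. The two ingredients --- computing $\chi(X^H)$ by summing $|(G/K)^H|$ over orbits, and the M\"obius-type inversion over the poset $\FF_{\geq H}$ --- are exactly what one expects, and your telescoping verification of the identity
\[
\sum_{K \in \FF_{>H}} \bigl(1 - \chi(\K(\FF_{>K}))\bigr) = \chi(\K(\FF_{>H}))
\]
via chains indexed by their minimum is clean. One small remark: your parenthetical about the case $H \notin \FF$ is slightly off. Since a separating family is closed under subgroups, $H \notin \FF$ forces $\FF_{>H} = \emptyset$, whence $i_\FF(H) = 1/[N_G(H):H] \neq 0$, while all $c_n(H)$ vanish; so the formula as literally stated fails there. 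In practice the lemma is only applied to $H \in \FF$ (and the original \cite{OS} likely restricts to that case implicitly), so this does not affect anything.
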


\begin{proposition}[{\cite[Tables 2,3,4]{OS}}]\label{indices}
Let $G$ be one of the simple groups $\PSL_2(2^k)$ for $k\geq 2$, $\PSL_2(q)$ for $q\equiv \pm 3 \pmod 8$ and $q\geq 5$, or $\Sz(2^k)$ for odd $k\geq 3$. Then $i_\SLV( 1 ) = 1$.
\end{proposition}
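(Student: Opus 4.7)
The plan is to apply the definition directly. Since $N_G(1) = G$, we have $[N_G(1) : 1] = |G|$, so
$$i_\SLV(1) = \frac{1-\chi(\K(\SLV_{>1}))}{|G|},$$
and the claim $i_\SLV(1) = 1$ is equivalent to $\chi(\K(\SLV_{>1})) = 1 - |G|$. Using the standard identification of the reduced Euler characteristic of the order complex of the proper part of a bounded poset $P$ with the M\"obius function value $\mu_P(\hat 0, \hat 1)$, applied to $P = \SLV\cup\{G\}$ (whose minimum is $\{1\}$, whose maximum is $G\notin \SLV$, and whose proper part is exactly $\SLV_{>1}$), the claim reduces to the identity
$$\mu_{\SLV\cup\{G\}}(1, G) = -|G|.$$

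A useful reduction is as follows: for any $H \in \SLV$, every subgroup of $H$ is again solvable, so the interval $[1, H]$ in $\SLV\cup\{G\}$ coincides with the full subgroup lattice $\SS(H)$, and therefore $\mu_{\SLV\cup\{G\}}(1, H) = \mu_{\SS(H)}(1, H)$ is a purely lattice-theoretic invariant of $H$. The defining M\"obius recurrence then becomes
$$\mu_{\SLV\cup\{G\}}(1, G) = -\sum_{H \in \SLV} \mu_{\SS(H)}(1, H),$$
a sum over conjugacy classes of solvable subgroups of $G$ weighted by their class sizes.

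I would then proceed case-by-case, invoking Dickson's classification of subgroups of $\PSL_2(q)$ and Suzuki's analogous one for $\Sz(2^k)$. In each case, the solvable subgroups fall into a short list of isomorphism types (cyclic, dihedral, metacyclic of Borel type, $A_4$, and $S_4$), for each of which $\mu_{\SS(H)}(1, H)$ is standard: for instance $\mu(1, Z_p) = -1$, $\mu(1, V_4) = 2$, $\mu(1, D_{2p}) = p$ for an odd prime $p$, and $\mu(1, A_4) = 4$. Collecting the conjugacy class counts supplied by the classification and summing, the arithmetic should telescope to $-|G|$. As a sanity check for $G = A_5 = \PSL_2(4)$, where every proper subgroup is solvable, the sum indexes over the $15$ copies of $Z_2$, $10$ of $Z_3$, $6$ of $Z_5$, $5$ of $V_4$, $10$ of $S_3$, $6$ of $D_{10}$, and $5$ of $A_4$, giving $1 + 15(-1) + 10(-1) + 6(-1) + 5(2) + 10(3) + 6(5) + 5(4) = 60$, whence $\mu_{\SLV\cup\{G\}}(1, A_5) = -60 = -|A_5|$, as required.

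The main obstacle is handling the three families uniformly. A subtlety arises when $G$ has non-solvable proper subgroups, for example $\PSL_2(2^d) \leq \PSL_2(2^k)$ for proper divisors $d \mid k$ with $d \geq 2$, or $\PSL_2(q')\leq \PSL_2(q)$ in the case of non-prime $q$ satisfying the congruence: these subgroups are \emph{excluded} from the sum, yet their solvable subgroups must be correctly counted via their embeddings in different conjugate copies. Verifying that the solvable contributions alone still total exactly $-|G|$ for each of the three families is the content of the explicit computation carried out in Tables 2, 3, 4 of \cite{OS}.
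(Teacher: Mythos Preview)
Your proposal is correct: the paper itself gives no proof, simply citing Tables 2--4 of \cite{OS}, and your M\"obius-function reformulation together with the $A_5$ sanity check and the deferral of the full case analysis to those tables is exactly the computation those tables record. As a side remark, within the paper's own toolkit there is an even shorter route: by \cref{lemma2.3}, $i_\SLV(1)$ equals the alternating sum of the numbers of \emph{free} orbits of cells in any finite H-universal $(G,\SLV)$-complex, and \cref{example3.4,example3.5,example3.7} exhibit such complexes with no free $0$- or $1$-cells and exactly one free orbit of $2$-cells, whence $i_\SLV(1)=0-0+1=1$ immediately.
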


For each family of groups appearing in \cref{teoA}, Oliver and Segev describe an example.
In what follows, $D_{2m}$ is a dihedral group of order $2m$ and $C_m$ is a cyclic group of order $m$.

\begin{proposition}[{\cite[Example 3.4]{OS}}]\label{example3.4}
Set $G=\PSL_2(q)$, where $q=2^k$ and $k\geq 2$.
Then there is a $2$-dimensional acyclic fixed point free $G$-complex $X$, all of whose isotropy subgroups are solvable.
More precisely $X$ can be constructed to have three orbits of vertices with isotropy subgroups isomorphic to $B=\finiteField_q\rtimes C_{q-1}$, $D_{2(q-1)}$, and $D_{2(q+1)}$; three orbits of edges with isotropy subgroups isomorphic to $C_{q-1}$, $C_2$ and $C_2$; and one free orbit of $2$-cells. 
\end{proposition}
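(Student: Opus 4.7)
The plan is to realize $X$ as an explicit $G$-graph with three orbits of vertices and three orbits of edges, as prescribed in the statement, and then attach a single free $G$-orbit of $2$-cells along a carefully chosen closed edge-path; acyclicity of the result will then follow from an Euler-characteristic count combined with a fundamental-group argument.

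I first fix three maximal subgroups of $G=\PSL_2(q)$, $q=2^k\geq 4$: a Borel subgroup $B\cong \mathbb{F}_q\rtimes C_{q-1}$, the normalizer $T_+\cong D_{2(q-1)}$ of a split torus $A\leq B$, and the normalizer $T_-\cong D_{2(q+1)}$ of a non-split torus, so that $B\cap T_+ = A$. Since all involutions in $G$ are conjugate and each of $B, T_+, T_-$ contains involutions, I can choose an involution $\sigma$ lying in both $B$ and some conjugate of $T_-$, and an involution $\tau$ lying in both $T_+$ and some (possibly different) conjugate of $T_-$. I then define the $1$-skeleton $X^1$ as the $G$-graph with vertex orbits $G/B,\,G/T_+,\,G/T_-$ and edge orbits $G/A,\,G/\langle\sigma\rangle,\,G/\langle\tau\rangle$ joining the three vertex orbits pairwise in the triangular pattern dictated by the incidences above. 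The quotient $X^1/G$ is a triangle, and $X^1$ is connected by Bass--Serre theory because the three vertex stabilizers generate $G$ (any two of them already do, since they are distinct maximal subgroups of the simple group $G$).

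A direct count yields
\[ \chi(X^1) = (q+1) + \tfrac{q(q+1)}{2} + \tfrac{q(q-1)}{2} - q(q+1) - q(q-1)(q+1) = 1 - |G|, \]
so attaching one free $G$-orbit of $2$-cells produces a $2$-complex $X$ with $\chi(X) = 1$, consistent with acyclicity. Fixed-point freeness is automatic since every isotropy subgroup is proper, and solvability of all isotropies is visible from the list $B, T_\pm, A, \langle\sigma\rangle, \langle\tau\rangle, 1$.

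The main obstacle is choosing the attaching word $w$ of the free $2$-orbit so that $\pi_1(X) = 0$ (equivalently $H_1(X) = 0$); once this is done, $\chi(X) = 1$ forces $H_2(X) = 0$ and acyclicity follows. By Bass--Serre theory, $\pi_1(X^1)$ is the kernel of the natural surjection from $\Lambda$ (the fundamental group of the triangle of groups with vertex groups $B, T_+, T_-$ and edge groups $A, \langle\sigma\rangle, \langle\tau\rangle$) onto $G$, and I must exhibit a closed edge-path $w$ in $X^1$ whose $G$-equivariant normal closure in $\Lambda$ equals this kernel. This is equivalent to finding a presentation of $G$ whose only relation beyond those of the triangle amalgamation is a single conjugacy class of words. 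For $q = 4$ (so $G \cong A_5$) this can be verified by hand from the standard presentation $A_5 = \langle a, b \mid a^2, b^3, (ab)^5\rangle$; the general case proceeds by analogous classical presentations of $\PSL_2(2^k)$.
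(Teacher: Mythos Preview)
The paper does not prove this proposition; it is quoted verbatim from Oliver--Segev \cite[Example 3.4]{OS} and used as a black box.  So there is no ``paper's proof'' to compare against.  That said, your proposal contains a genuine error that would make the argument fail even in the case you single out as checkable by hand.

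You write ``choosing the attaching word $w$ of the free $2$-orbit so that $\pi_1(X)=0$ (equivalently $H_1(X)=0$)''.  These are \emph{not} equivalent: the statement asks for $X$ to be \emph{acyclic}, not simply connected.  Indeed, for $q=4$ the complex produced by this construction is (up to $G$-homotopy) the barycentric subdivision of the $2$-skeleton of the Poincar\'e homology sphere, whose fundamental group is the binary icosahedral group $A_5^*\cong\mathrm{SL}(2,5)$ of order $120$ (this is recalled explicitly in the paper right after the proposition and again in \cref{defGammaOSA5}).  So your proposed verification for $A_5$ via the presentation $\langle a,b\mid a^2,b^3,(ab)^5\rangle$ cannot possibly succeed: no choice of a single attaching word will make $\pi_1(X)$ trivial, and in fact the entire content of the paper is that this never happens for any number of attaching words.

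What you actually need is only $H_1(X;\Z)=0$, which together with your Euler-characteristic count $\chi(X)=1$ and connectedness gives acyclicity.  In the language of your Bass--Serre setup, you do not need the $G$-equivariant normal closure of $w$ in $\Lambda$ to be all of $\ker(\Lambda\to G)$; you only need it to generate $\ker(\Lambda\to G)$ modulo commutators, i.e.\ to generate the relation module.  That is a far weaker (homological) condition, and it is what Oliver--Segev verify.
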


We have $A_5= \PSL_2(2^2)$.
The barycentric subdivision of the $2$-skeleton of the Poincar\'e dodecahedral space is an $A_5$-complex of the type given in \cref{example3.4} with fundamental group the binary icosahedral group $A_5^*\cong \mathrm{SL}(2,5)$ which has order $120$.
The Poincar\'e dodecahedral space appears in many other natural ways, for more information see \cite{KirbyScharlemann}.

\begin{proposition}[{\cite[Example 3.5]{OS}}]\label{example3.5}
Assume that $G=\PSL_2(q)$, where $q=p^k\geq 5$ and $q\equiv \pm 3 \pmod 8$. Then there is a $2$-dimensional acyclic fixed point free $G$-complex $X$, all of whose isotropy subgroups are solvable. More precisely, $X$ can be constructed to have four orbits of vertices with isotropy subgroups isomorphic to $B=\finiteField_q\rtimes C_{(q-1)/2}$, $D_{q-1}$, $D_{q+1}$, and $A_4$; four orbits of edges with isotropy subgroups isomorphic to $C_{(q-1)/2}$, $C_2^2$, $C_3$ and $C_2$; and one free orbit of $2$-cells.
\end{proposition}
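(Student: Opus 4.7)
The plan is to give an explicit construction along the lines of \cref{example3.4}, guided by Dickson's classification of subgroups of $\PSL_2(q)$. For $q = p^k \geq 5$ with $q \equiv \pm 3 \pmod 8$, a Sylow $2$-subgroup of $G = \PSL_2(q)$ is isomorphic to $C_2 \times C_2$, and the conjugacy classes of maximal solvable subgroups are represented by the Borel $B = \F_q \rtimes C_{(q-1)/2}$, the normalizers of the split and non-split tori $D_{q-1}$ and $D_{q+1}$, and the exceptional $A_4$ (the normalizer of a Sylow $2$-subgroup). These will serve as the four vertex stabilizers.

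First I would assemble the $1$-skeleton $X^{(1)}$ as a $G$-graph with vertex orbits $G/B$, $G/D_{q-1}$, $G/D_{q+1}$, $G/A_4$ and four edge orbits realizing the prescribed edge stabilizers, where each edge with stabilizer $K$ connects two vertices whose stabilizers both contain $K$. Concretely, one takes a $C_{(q-1)/2}$-orbit between $G/B$ and $G/D_{q-1}$ (using that the cyclic torus sits in both), a $C_2 \times C_2$-orbit between $G/A_4$ and one of the $D_{q\pm 1}$ vertices (using that a Sylow $2$-subgroup sits in both), a $C_3$-orbit between $G/A_4$ and whichever vertex contains an appropriate element of order $3$, and a $C_2$-orbit joining the remaining pair. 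The precise bipartite pattern is dictated by the containments in the subgroup lattice of $G$.

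Next I would verify that $X^{(1)}$ is connected and compute $\chi(X^{(1)})$ together with the $\Z[G]$-module structure of $H_1(X^{(1)}; \Z)$. By \cref{indices}, $i_\SLV(1) = 1$, so \cref{lemma2.3} forces exactly one free orbit of $2$-cells, matching the statement. Attaching this orbit along a loop $w$ produces a complex $X$ with the correct Euler characteristic, so acyclicity reduces to checking that $H_1(X; \Z) = H_2(X; \Z) = 0$.

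The main obstacle is the choice of the attaching word $w$. Since the $2$-cells form a free $G$-orbit, the boundary map $\partial_2 \colon \Z[G] \to H_1(X^{(1)}; \Z)$ is the $\Z[G]$-linear extension of $[w]$, and one needs $[w]$ to generate $H_1(X^{(1)}; \Z)$ as a $\Z[G]$-module with zero annihilator, so that $\partial_2$ is an isomorphism. Producing such a $w$ is the delicate part of the construction and requires detailed information about how the coset graphs of the four maximal stabilizers overlap; once $w$ is in hand, the remaining properties of $X$ (fixed-point freeness and solvability of isotropy) are immediate, since none of $B$, $D_{q\pm 1}$, $A_4$ equals $G$ and all of them are solvable.
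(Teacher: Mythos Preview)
This proposition is not proved in the paper; it is quoted from \cite[Example~3.5]{OS} and only the statement is recalled. So there is no proof here to compare your attempt against.

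Your outline follows the natural strategy, and indeed the one Oliver--Segev use, but it stops exactly where the content lies. Assembling the $G$-graph from the subgroup inclusions and observing that one free orbit of $2$-cells is what is needed for $\chi=1$ is bookkeeping. The substance is the existence of a closed edge path $w$ whose $G$-orbit of $2$-cells kills $H_1(X^{(1)};\Z)$ injectively, i.e.\ the assertion that $H_1(X^{(1)};\Z)\cong\Z[G]$ as a $\Z[G]$-module. You name this as ``the delicate part'' and then do not do it. In \cite{OS} this step is not handled by exhibiting $w$ explicitly; they prove a general criterion relating the existence of an acyclic $(G,\FF)$-complex over a given $G$-graph to a condition on characters, and then verify that criterion for $\PSL_2(q)$ with $q\equiv\pm3\pmod 8$ from the character table. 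Without either that argument or an explicit $w$, what you have is a plan, not a proof. A smaller gap: your description of which vertex orbits each edge joins is left vague; which of $D_{q-1}$, $D_{q+1}$ contains $C_2^2$ or $C_3$ depends on $q\bmod 4$ and $q\bmod 3$, and the resulting incidence pattern (and the connectedness of $X^{(1)}$) should be pinned down rather than deferred to ``the subgroup lattice.''
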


\begin{proposition}[{\cite[Example 3.7]{OS}}]\label{example3.7}
Set $q=2^{2k+1}$ for any $k\geq 1$. Then there is a $2$-dimensional acyclic fixed point free $\Sz(q)$-complex $X$, all of whose isotropy subgroups are solvable. More precisely, $X$ can be constructed to have four orbits of vertices with isotropy subgroups isomorphic to $M(q,\theta)$, $D_{2(q-1)}$, $C_{q+\sqrt{2q}+1}\rtimes C_4$, $C_{q-\sqrt{2q}+1}\rtimes C_4$; four orbits of edges with isotropy subgroups isomorphic to $C_{q-1}$, $C_4$, $C_4$ and $C_2$; and one free orbit of $2$-cells.
\end{proposition}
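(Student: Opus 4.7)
The plan is to build $X$ explicitly as a $G$-CW complex from the subgroup lattice of $G=\Sz(q)$ for $q=2^{2k+1}$. By Suzuki's classification, the maximal subgroups of $\Sz(q)$ fall into exactly four conjugacy classes, represented by the Borel subgroup $M(q,\theta)$ of order $q^2(q-1)$, the normalizer $D_{2(q-1)}$ of a split maximal torus, and the Frobenius normalizers $C_{q\pm\sqrt{2q}+1}\rtimes C_4$ of the two non-split maximal tori. These will be the four vertex stabilizers, giving the four vertex orbits $G/M_i$ of the statement.

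First I would assemble the $1$-skeleton as (the universal cover of) a graph of groups. For each of the prescribed edge stabilizers $C_{q-1}, C_4, C_4, C_2$, a direct computation inside $\Sz(q)$ identifies it with an intersection $M_i\cap M_j$ of two of the maximal subgroups: the split torus $C_{q-1}$ is the natural intersection $M(q,\theta)\cap D_{2(q-1)}$; the two copies of $C_4$ appear as intersections of $M(q,\theta)$ with each of the two Frobenius normalizers, coming from Sylow $2$-subgroups of the Frobenius complements; and $C_2$ — the unique class of involutions in $\Sz(q)$ — links the remaining pair of vertex orbits. Drawing one edge orbit per such intersection produces a connected graph of groups $Y$, and hence a $G$-graph with the correct edge-orbit data.

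Next I would attach a single free $G$-orbit of $2$-cells along a loop chosen to kill the Bass--Serre fundamental group of the covering of $Y$, yielding an $X$ with precisely the prescribed orbit structure. Fixed-point freeness of the resulting action is automatic, since all cell stabilizers are proper in $G$. All stabilizers are solvable by inspection, so \cref{AciclicoOVacio} gives $X^H$ acyclic whenever non-empty; combining the orbit count with $i_{\SLV}(1)=1$ from \cref{indices} via \cref{lemma2.3} forces $\chi(X)=1$, and together with $H_1(X)=0$ by construction of the attaching $2$-cell this yields $\widetilde{H}_*(X)=0$.

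The main obstacle, and the real combinatorial substance of the argument, is the verification that the four maximal subgroups glued along the stated intersections, together with the one extra relator coming from the attached $2$-cell orbit, really present $\Sz(q)$ — equivalently, that the universal cover of the resulting complex of groups is simply connected and carries the correct $G$-action. This requires a precise calculation inside $\Sz(q)$ based on Suzuki's structure theorem, and is the only step of genuine difficulty in the construction.
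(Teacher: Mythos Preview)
The paper does not prove this statement; it is quoted verbatim from Oliver--Segev \cite[Example~3.7]{OS} as background, with no argument supplied. So there is no ``paper's own proof'' to compare against. Your outline --- take the four classes of maximal solvable subgroups as vertex groups, build the $G$-graph from the appropriate intersections, then attach one free orbit of $2$-cells --- is indeed the shape of the Oliver--Segev construction.

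That said, there is a genuine conceptual error in your plan. You repeatedly aim for \emph{simple connectivity}: you want to ``kill the Bass--Serre fundamental group'' and you describe the main obstacle as checking that the glued complex of groups ``really present[s] $\Sz(q)$''. But the statement only asks for an \emph{acyclic} complex, and this distinction is the entire point of the paper you are reading. In the parallel $A_5=\PSL_2(4)$ construction (\cref{example3.4}), the resulting complex is the $2$-skeleton of the Poincar\'e sphere, with $\pi_1\cong A_5^*$ of order $120$; and the main theorem of this very paper (\cref{theoremCDA5}, via \cref{TheoremEquivalenceA5CDAndWi}) shows that for $A_5$ \emph{no} choice of attaching map can make such a complex simply connected. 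Whether the analogous statement holds for $\Sz(q)$ is exactly the open part of \cref{CasacubertaDicks}. So the step you flag as ``the only step of genuine difficulty'' is not merely difficult --- it may well be impossible, and in any case it is not what is required. What Oliver--Segev actually verify is that the attaching loop can be chosen so that its $G$-orbit generates $H_1$ of the $G$-graph as an abelian group; this is a linear-algebra condition over $\Z[G]$, not a presentation-theoretic one.

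A smaller circularity: your appeal to \cref{AciclicoOVacio} and \cref{lemma2.3} to deduce $\chi(X)=1$ presupposes that $X$ is already acyclic (both results take an acyclic or H-universal complex as hypothesis). The fix is immediate --- $\chi(X)$ is read off directly from the orbit data and the orders of the listed stabilizers --- but as written the argument is begging the question.
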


We also have $A_5\cong\PSL_2(5)$, so this group is addressed in both \cref{example3.4} and \cref{example3.5}.
There is no other such exception.

\begin{definition}\label{defGammaOS}
If $G$ is one of the groups in \cref{teoA}, the \textit{Oliver--Segev $G$-graph $\XOS(G)$} is the $1$-skeleton of any $2$-dimensional fixed point free acyclic $G$-complex without free orbits of $1$-cells of the type constructed in \cref{example3.4,example3.5,example3.7}.
For this definition, we regard $A_5$ as $\PSL_2(2^2)$ rather than $\PSL_2(5)$.
\end{definition}

Generally, there is more than one possible choice for the $G$-graph $\XOS(G)$.  Even for $G=A_5$, thought of as $\PSL_2(2^2)$, the quotient graph $\XOS(G)/G$ is not unique.
However in \cref{UnicidadXOS} we show that $\XOS(G)$ is unique up to $G$-homotopy equivalence.
Moreover, \cref{PosiblesPi1NoDependenDeGammaOS} shows the particular choice of $\XOS(G)$ is irrelevant for our purposes.

\begin{definition}[A construction of $\XOS(A_5)$]\label{defGammaOSA5}
Here we give a construction of $\XOS(A_5)$ and we fix some notation in regard to this graph.
Consider the following subgroups of $A_5$:
\begin{align*}
H_1 & =\langle(2,5)(3,4),(3,5,4)\rangle \cong A_4,  \\
H_2 & = \langle (3,5,4),(1,2)(3,5)\rangle \cong D_{6}  \text{ and}\\
H_3 &=  \langle(1,2)(3,5),(2,5)(3,4)\rangle\cong D_{10} .
\end{align*}
The graph $\XOS(A_5)$ has three orbits of vertices whose representatives $v_1$, $v_2$, $v_3$ have stabilizers $H_1$, $H_2$, $H_3$ respectively.
In addition, $\XOS(A_5)$ has three orbits of edges whose representatives
$v_1\xrightarrow{e_{12}} v_2$,
$v_3\xrightarrow{e_{31}} v_1$ and
$v_2\xrightarrow{e_{23}} v_3$
have stabilizers 
\begin{align*}
H_{12}=H_1\cap H_2 &= \langle (3,5,4)\rangle \cong \Z_3,\\
H_{13}=H_1\cap H_3 &=\langle (2,5)(3,4)\rangle \cong \Z_2   \text{ and}\\
H_{23}=H_2\cap H_3 &=\langle (1,2)(3,5)\rangle \cong \Z_2 
\end{align*}
respectively.

Attaching a free orbit of $2$-cells to $\XOS(A_5)$ along the orbit of the closed edge path $(e_{12}, e_{23}, e_{31})$ we obtain an acyclic $2$-dimensional fixed point free $A_5$-complex of the type given in \cref{example3.4}.
This complex is, in fact, the barycentric subdivision of the $2$-skeleton of the Poincar\'e dodecahedral space  (a simplicial complex having $21=5+10+6$ vertices, $80=20+30+30$ edges and $60$ faces).
A concrete isomorphism can be produced by mapping $v_3$ to the barycentre of a pentagonal $2$-cell $ABCDE$, $v_1$ to $A$ and $v_2$ to the barycentre (midpoint) of $AB$.
For more details on this see \cite[pp. 20-21]{OS}.
\end{definition}

\section{A reduction}
\label{sectionReduction}
In this section we rely on the results of Oliver and Segev to prove \cref{Refinement}, which allows us to reduce the proof of \cref{theoremCDA5} to the study of acyclic complexes of the type considered in \cite{OS}.
We also prove \cref{reduccion} which describes a possible path to establish \cref{CasacubertaDicks}.
We first prove some results which will be used to do equivariant modifications to our complexes.

\begin{definition}
If $X,Y$ are $G$-spaces, a \textit{$G$-homotopy} is an equivariant map $H\co X\times I \to Y$.
We say that $f_0(x)=H(x,0)$ and $f_1(x)=H(x,1)$ are \textit{$G$-homotopic} and we denote this by $f_0\simeq_G f_1$.
An equivariant map $f\co X\to Y$ is a \textit{$G$-homotopy equivalence} if there is an equivariant map $g\co Y\to X$ such that $fg\simeq_G 1_Y$ and $gf\simeq_G 1_X$.
A $G$-invariant subspace $A$ of $X$ is a \textit{strong $G$-deformation retract of $X$} if there is a retraction $r\co X\to A$ such that there is a $G$-homotopy $H\co ir\simeq 1_X$ relative to $A$, where $i\co A\to X$ is the inclusion.
\end{definition}

\begin{remark}
An equivariant map $f\co X\to Y$ is a $G$-homotopy equivalence if and only if $f^H\co X^H \to Y^H$ is a homotopy equivalence for each subgroup $H\leq G$ (see \cite[Chapter II, (2.7) Proposition]{Dieck}).
Thus, if $f\co X\to Y$ is a $G$-homotopy equivalence, the action $G\actson X$ is fixed point free (resp. essential) if and only if the action $G\actson Y$ is fixed point free (resp. essential).
\end{remark}

The following lemma allows us to do elementary expansions equivariantly.
\begin{lemma}\label{LemaExpansiones}
Let $X$ be an acyclic $2$-dimensional $G$-complex. Let $H\leq G$ and $x_0,x_1\in X^{(0)}\cap X^H$.
Then there is a $G$-complex $Y\supset X$, such that $X$ is a strong $G$-deformation retract of $Y$ and $Y$ is obtained from $X$ by attaching an orbit of $1$-cells of type $G/H$ with endpoints $\{ x_0 , x_1\}$ and an orbit of $2$-cells of type $G/H$.
 
\begin{proof}
We attach an orbit of $1$-cells of type $G/H$ to $X$ using the attaching map $\varphi \co G/H \times S^0 \to X^{(0)}$ defined by  $(gH,1)\mapsto g\cdot x_0$, $(gH,-1)\mapsto g\cdot x_1$.
Let $e$ be the $1$-cell of this new orbit corresponding to the coset $H$. Since $X$ is acyclic, by \cref{AciclicoOVacio} $X^H$ is also acyclic. Let $\gamma$ be an edge path in $X^H$ starting at $x_1$ and ending at $x_0$.
Then we attach an orbit of $2$-cells of type $G/H$ in such a way that the $2$-cell corresponding to the coset $H$ is attached along the closed edge path given by $e$ and $\gamma$.
It is clear that $X$ is a strong $G$-deformation retract of $Y$.
\end{proof}
\end{lemma}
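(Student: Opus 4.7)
The plan is to perform two equivariant cell attachments and then exhibit an equivariant deformation retraction, using Segev's acyclicity theorem as the key input. First I would attach an orbit of $1$-cells along the map $\varphi\colon G/H\times S^0\to X^{(0)}$ given by $(gH,1)\mapsto g\cdot x_0$ and $(gH,-1)\mapsto g\cdot x_1$; this is well-defined and equivariant because $H$ fixes both $x_0$ and $x_1$. Writing $e$ for the new $1$-cell indexed by the coset $H$, its stabilizer in $G$ is exactly $H$, and $H$ acts trivially on $e$, so admissibility of the resulting $G$-graph $X'$ is preserved.

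Next, to attach $2$-cells of type $G/H$ I need a closed edge path through $e$ that is $H$-invariant. This is where Theorem \ref{AciclicoOVacio} enters: since $X$ is $2$-dimensional and acyclic and $X^H$ is nonempty (it contains $x_0$ and $x_1$), $X^H$ is acyclic, in particular path-connected. I choose an edge path $\gamma\subseteq X^H$ from $x_1$ to $x_0$ and attach an orbit of $2$-cells of type $G/H$ whose representative is glued along the loop $e\cdot\gamma$. Because $H$ fixes $e$ and $\gamma$ pointwise, the attaching map is $H$-equivariant, and propagating it by coset representatives gives an admissible $G$-complex $Y$ with the claimed cell structure.

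Finally, the representative $2$-cell supplies an $H$-equivariant strong deformation retraction of (the closed $2$-cell together with $e$) onto $\gamma\subseteq X$ relative to $\{x_0,x_1\}$: pick any homeomorphism of the $2$-disk that collapses the $e$-arc of its boundary onto the $\gamma$-arc, and contract radially. Translating this retraction across the $G$-orbit by coset representatives yields a $G$-equivariant strong deformation retraction of $Y$ onto $X$. The only mild obstacle is the equivariance bookkeeping, i.e.\ checking that the new cells have stabilizer exactly $H$ and that the chosen disk retraction is $H$-equivariant; both are handled by first performing the construction $H$-equivariantly on a representative cell and then translating, so there is no substantial difficulty.
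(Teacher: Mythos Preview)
Your proposal is correct and follows essentially the same approach as the paper: attach the orbit of $1$-cells via $\varphi$, invoke \cref{AciclicoOVacio} to find an edge path $\gamma\subset X^H$ joining $x_1$ to $x_0$, attach the orbit of $2$-cells along $e\cdot\gamma$, and observe the resulting elementary expansion is $G$-equivariant. The only difference is that you spell out the deformation retraction and the equivariance bookkeeping in more detail than the paper, which simply declares it ``clear''.
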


The following very natural definitions appear in \cite[Section 2]{KLV}.

\begin{definition}\label{definitionReducedGraph}
A \textit{forest} is a graph with trivial first homology.
If a subcomplex $\Gamma$ of a CW complex $X$ is a forest, there is a CW complex $Y$ obtained from $X$ by shrinking each connected component of $\Gamma$ to a point.
The quotient map $q\co X\to Y$ is a homotopy equivalence and we say $Y$ is obtained from $X$ by a \textit{forest collapse}.

If $X$ is a $G$-complex and $\Gamma\subset X$ is a forest which is $G$-invariant, the quotient map $q$ is a $G$-homotopy equivalence and we say the $G$-complex $Y$ is obtained from $X$ by a $G$-\textit{forest collapse}.
We say that a $G$-graph is \textit{reduced} if it has no edge $e$ such that $G\cdot e$ is a forest.
\end{definition}

\begin{lemma}\label{Lema1EsqueletoReducido}
Let $X$ be a $2$-dimensional acyclic $G$-complex. If $X^{(1)}$ is a reduced $G$-graph then stabilizers of different vertices are not comparable.

\begin{proof}
Let $\FF=\{ G_x \tq x\in X^{(0)}\}$ and let $M=\{ v\in X^{(0)} \tq G_v \text{ is maximal in }\FF \}$.
We first prove, by contradiction, that $X^{(0)}=M$.
Consider $v\in X^{(0)}-M$ such that $G_v$ is maximal in $\{G_{x} \tq x \in X^{(0)}-M\}$.
Then since $X^{G_v}$ contains $v$, by \cref{AciclicoOVacio} it must be acyclic.
Since $v\notin M$, there is a vertex $w\in X^{G_v}\cap M$.
By connectivity there is an edge $e\in X^{G_v}$ whose endpoints $v'$ and $w'$ satisfy $v'\notin M$ and $w'\in M$.
Since $G_{v'}\geq G_v$ and $v'\notin M$, by our choice of $v$ we have $G_v=G_{v'}$.
Since $e\in X^{G_v}$ we have $G_v\leq G_e$ and since $v'$ is an endpoint of $e$ we have $G_e \leq G_{v'}$. 
Thus $G_e=G_{v'}$ and then the degree of $v'$ in the graph $G\cdot e$ (which has vertex set $G\cdot w' \coprod G\cdot v'$) is $1$.
Thus $G\cdot e$ is a forest, contradiction.
Therefore we must have $M=X^{(0)}$.
To conclude we have to prove that different vertices $u,v\in M$ have different stabilizers.
Suppose $G_u=G_{v}$ to get a contradiction.
Since $u,v$ are vertices of $X^{G_u}$ which is connected, there is an edge $e\in X^{G_u}$ and by maximality we must have $G_e=G_u$.
If $u',v'$ are the endpoints of $e$, we have $G_{u'}=G_{v'}$. We have two cases and in any case we obtain a contradiction.
If $G\cdot u' \neq G \cdot v'$ then $G\cdot e$ is a forest consisting of $|G/G_e|$ disjoint edges, contradiction.
Otherwise, there is a nontrivial element $g\in G$ such that $g\cdot u' = v'$ and we have $G_{u'}= G_{v'} = gG_{u'}g^{-1}$. Thus $g\in N_G( G_{u'} )$.
Consider the action of $\langle g \rangle$ on $X^{G_{u'}}$, which is acyclic and thus has a fixed point by the Lefschetz fixed point theorem. But this cannot happen, since this would imply that $\langle G_{u'}, g \rangle \gneq G_{u'}$ fixes a point of $X$, which is a contradiction since $u'\in M$.
\end{proof}
\end{lemma}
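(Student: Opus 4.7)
My plan follows the Bass--Serre style argument that a reduced $G$-graph prevents nontrivial comparability of vertex stabilizers. The main technical observation, used repeatedly, is the following: if $e$ is an edge of $X$ whose endpoints $v', w'$ lie in distinct $G$-orbits and whose stabilizer equals $G_{v'}$, then in the orbit $G \cdot e$ the vertex $v'$ has degree one (any edge $g \cdot e$ incident to $v'$ would force $g \in G_{v'} = G_e$, hence $g \cdot e = e$). This makes $G \cdot e$ a disjoint union of stars centred at vertices of the orbit $G \cdot w'$, and therefore a forest, contradicting the reducedness of $X^{(1)}$.

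Write $\FF = \{G_x : x \in X^{(0)}\}$ and let $M$ denote the set of vertices whose stabilizer is maximal in $\FF$. The first step is to show $X^{(0)} = M$ by contradiction: pick $v \notin M$ whose stabilizer is maximal among non-$M$ stabilizers. By \cref{AciclicoOVacio}, $X^{G_v}$ is acyclic and hence connected; it contains $v$ and also at least one element of $M$, since any vertex $w$ with $G_w \supsetneq G_v$ must itself have maximal stabilizer by the extremal choice of $G_v$. Traversing an edge path from $v$ to such a $w$ in $X^{G_v}$, one isolates an edge $e$ with one endpoint $v' \notin M$ and the other $w' \in M$; the inclusions $G_v \subseteq G_e \subseteq G_{v'}$ combined with the extremality of $G_v$ force $G_{v'} = G_e = G_v$, and since $G_{w'} \supsetneq G_{v'}$ the two endpoint orbits are distinct. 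The key observation above then produces a forest orbit, contradicting reducedness.

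For the second step, suppose $u \neq v$ with $G_u = G_v$ (necessarily maximal by the first step). The acyclic set $X^{G_u}$ connects $u$ and $v$ by an edge path, on which one finds an edge $e$ with $G_u \subseteq G_e$; maximality forces the endpoints $u', v'$ of $e$ to satisfy $G_{u'} = G_{v'} = G_e = G_u$. If $G \cdot u' \neq G \cdot v'$, the key observation again yields a forest orbit. Otherwise some $g \in G$ sends $u'$ to $v'$, so $g \in N_G(G_{u'}) \setminus G_{u'}$, and $g$ acts on the finite-dimensional acyclic complex $X^{G_{u'}}$; by the Lefschetz fixed point theorem $g$ has a fixed point there. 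The open cell containing this fixed point is stabilized by $\langle G_{u'}, g \rangle \supsetneq G_{u'}$, so any vertex in its closure has stabilizer strictly containing $G_{u'}$, contradicting maximality.

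The main obstacle is the Lefschetz step above: one must carefully combine admissibility of the $G$-CW action (so that point stabilizers equal cell stabilizers) with the fact that vertex stabilizers dominate cell stabilizers, in order to propagate a fixed point of $g$ inside $X^{G_{u'}}$ back to a \emph{vertex} whose stabilizer strictly contains $G_{u'}$. All other steps are essentially bookkeeping layered on top of the degree-one-in-orbit observation and Segev's theorem \cref{AciclicoOVacio}.
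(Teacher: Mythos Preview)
Your proof is correct and follows essentially the same route as the paper's: the same two-step structure (first $X^{(0)}=M$ via an extremal non-maximal vertex, then distinct stabilizers via the Lefschetz argument on $X^{G_{u'}}$), the same use of \cref{AciclicoOVacio}, and the same degree-one-in-orbit observation to produce a forest orbit contradicting reducedness. You supply slightly more detail than the paper in justifying why a vertex of $M$ lies in $X^{G_v}$ and in propagating the Lefschetz fixed point back to a vertex, but the underlying argument is identical.
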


Now we prove the main results of the section.

\begin{theorem}\label{Refinement}
 Let $G$ be one of the groups in \cref{teoA}.
 Let $X$ be a fixed point free $2$-dimensional finite acyclic $G$-complex.
 Then there is a fixed point free $2$-dimensional finite acyclic $G$-complex $X'$ obtained from the $G$-graph $\XOS(G)$ by attaching $k\geq 0$ free orbits of $1$-cells and $k+1$ free orbits of $2$-cells and an epimorphism $\pi_1(X)\to \pi_1(X')$.
  \begin{proof}
    Let $\FF=\{H\leq G \tq X^H\neq \emptyset \}$.
    Then, by \cref{lemma1.2}, $\FF$ is a separating family and $X$ is an {\normalfont H}-universal $(G,\FF)$-complex.
    By \cref{proposition6.4}, we must have $\FF=\mathcal{SLV}$.
    By doing enough $G$-forest collapses we can assume that $X^{(1)}$ is a reduced $G$-graph.
    The stabilizers of the vertices of $\XOS(G)$ are precisely the maximal solvable subgroups of $G$.
    Therefore, since every solvable subgroup of $G$ fixes a point of $X$,
    by \cref{Lema1EsqueletoReducido}, we may identify  $X^{(0)}=\XOS(G)^{(0)}$.
    Applying \cref{LemaExpansiones} enough times to modify $X$, we may further assume $\XOS(G)$ is a subcomplex of $X$.

    Finally we will modify $X$ so that for every subgroup $1\neq H\leq G$, we have $X^H=\XOS(G)^H$.
    We do this by reverse induction on $|H|$.
    Assume that we have $X$ such that it holds for every subgroup $K$ with $H\lneq K\leq G$.
    If $H$ is not solvable, we have $X^H=\XOS(G)^H=\emptyset$ so we are done.
    If $H$ is solvable, since $\XOS(G)^H$ is a tree (it is acyclic and $1$-dimensional) and $X^H$ is acyclic by \cref{AciclicoOVacio}, the inclusion $\XOS(G)^H\hookrightarrow X^H$ is an $N_G(H)$-equivariant homology equivalence. 
    Now since $\XOS(G)^H$ is a tree we can define an $N_G(H)$-equivariant retraction $r_H\co  X^H\to \XOS(G)^H$.
    Then $r_H$ is a homology equivalence.
    Moreover, the stabilizer of the cells in $X^H - \XOS(G)^H$ is $H$ (the stabilizer cannot be bigger by the induction hypothesis).
    We define retractions $r_{H^g}\co X^{H^g}\to \XOS(G)^{H^g}$ by  $r_{H^g}(gx)=g\cdot r_H(x)$ which glue to give a $G$-equivariant homology equivalence
      $$r\co \XOS(G) \bigcup_{g\in G} X^{H^g}\to \XOS(G).$$
    We may replace $X$ by the pushout $\wt{X}$ given by the following diagram
    \begin{center}
    \begin{tikzcd}
    \XOS(G) \displaystyle\bigcup_{g\in G} X^{H^g} \arrow[hook]{d}[]{}\arrow{r}[]{r} & \XOS(G)\arrow[hook]{d}{} \\
    X\arrow[]{r}[swap]{\overline{r}}& \wt{X}
    \end{tikzcd}
    \end{center}
    It follows that $\overline{r}$ is a homology equivalence, so the resulting $G$-complex $\wt{X}$ is acyclic.
    Moreover since $\wt{X}^{(1)}$ is a subcomplex of $X^{(1)}$ and the restriction $\overline{r}\colon X^{(1)}\to \wt{X}^{(1)}$ is a retraction,
    $\overline{r}$ induces an epimorphism on $\pi_1$.
    This procedure removes the excessive orbits of cells of type $G/H$.
    By induction we obtain a complex $X'$ such that $X'^{(1)}$ coincides with $\XOS(G)$ up to $k\geq 0$ free orbits of $1$-cells and such that every orbit of $2$-cells is free.
    By \cref{lemma1.2} $X'$ is an H-universal $(G,\mathcal{SLV})$-complex.
    Now by \cref{lemma2.3} and \cref{indices} there are exactly $k+1$ orbits of $2$-cells.
\end{proof}
\end{theorem}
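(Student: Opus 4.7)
The plan is to reduce an arbitrary fixed-point-free $2$-dimensional finite acyclic $G$-complex $X$ to one built from $\XOS(G)$ by attaching free orbits of cells, applying a sequence of $G$-equivariant modifications that preserve acyclicity and the fixed-point-free property, and which collectively admit an epimorphism on fundamental groups.

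First, by \cref{lemma1.2,proposition6.4}, the family $\FF=\{H\leq G \tq X^H\neq\emptyset\}$ of isotropy subgroups of $X$ equals $\SLV$. By performing $G$-forest collapses as in \cref{definitionReducedGraph}, which are $G$-homotopy equivalences, I may assume $X^{(1)}$ is reduced. Then \cref{Lema1EsqueletoReducido} ensures that distinct vertex stabilizers are incomparable. Since $\FF=\SLV$, each maximal solvable subgroup of $G$ must fix a vertex, and these maximal subgroups are exactly the vertex stabilizers of $\XOS(G)$. This identifies the vertex set of $X$ equivariantly with that of $\XOS(G)$. Repeated applications of \cref{LemaExpansiones}, each attaching a $G$-orbit of $1$-cells together with a parallel orbit of $2$-cells as a strong $G$-deformation retract, then allow me to embed $\XOS(G)$ itself as a subcomplex of $X$ without changing the fixed-point sets.

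The core step is to trim $X$ so that $X^H=\XOS(G)^H$ for every $1\neq H\leq G$, working by reverse induction on $|H|$. The non-solvable case is immediate, as both sides are empty by \cref{lemmaNonsolvableSubgroup}. For solvable $H$, the set $\XOS(G)^H$ is a tree (acyclic and $1$-dimensional) while $X^H$ is acyclic by \cref{AciclicoOVacio}, so I can build an $N_G(H)$-equivariant retraction $r_H\co X^H\to \XOS(G)^H$. The inductive hypothesis forces the stabilizer of every cell in $X^H\smallsetminus \XOS(G)^H$ to be exactly $H$, so the conjugate retractions $r_{H^g}(gx)=g\cdot r_H(x)$ glue along $\XOS(G)$ to a $G$-equivariant homology equivalence; pushing out replaces $X$ by a new $G$-complex $\widetilde X$ in which the extraneous cells of type $G/H$ disappear while the already-normalized higher fixed sets are untouched. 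Because each such pushout restricts to a retraction on $1$-skeleta, it is surjective on $\pi_1$, and composing all of them yields the required epimorphism $\pi_1(X)\to\pi_1(X')$.

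Finally, once the reduction stabilizes, $X'$ consists of $\XOS(G)$ together with $k\geq 0$ free orbits of $1$-cells and some number $\ell\geq 0$ of free orbits of $2$-cells. Applying \cref{lemma2.3} to the trivial subgroup, together with the formula $i_\SLV(1)=1$ from \cref{indices}, gives $\ell-k=1$, so $\ell=k+1$. I expect the main obstacle to lie in the inductive pushout step: verifying carefully that the retractions $r_H$ can be chosen $N_G(H)$-equivariantly and glue without conflict along the $G$-orbit of $H$, and that the output $\widetilde X$ inherits all the hypotheses (fixed-point free, acyclic, isotropy family still $\SLV$, stabilizer incomparability of vertices) needed to continue the induction.
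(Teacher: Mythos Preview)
Your proposal is correct and follows essentially the same argument as the paper's proof: identify $\FF=\SLV$, reduce the $1$-skeleton and match vertices via \cref{Lema1EsqueletoReducido}, expand to embed $\XOS(G)$, then prune by reverse induction on $|H|$ via the $N_G(H)$-equivariant retraction and pushout, and finish with the Euler-characteristic count from \cref{lemma2.3} and \cref{indices}. The concerns you flag at the end (equivariance and gluing of the $r_H$, preservation of the inductive hypotheses) are exactly the points the paper handles implicitly, and your treatment of them is adequate.
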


From \cite[Propositions 3.3 and 3.6]{OS} we have:

\begin{proposition}\label{minimalSimpleGroupsOfOliverSegev}
Each of the groups in the statement of \cref{teoA} has a subgroup isomorphic to one of the following groups:
\begin{itemize}
\item $\PSL_2(2^p)$ for $p$ prime;

\item $\PSL_2(3^p)$ for an odd prime $p$;

\item $\PSL_2(q)$ for a prime $q>3$ such that  $q\equiv \pm 3 \pmod 5$ and $q\equiv \pm 3 \pmod 8$;

\item $\Sz(2^p)$ for $p$ an odd prime.
\end{itemize}
Moreover, every proper subgroup of a group in this list is solvable.
\end{proposition}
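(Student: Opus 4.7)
The plan is to address the two halves of the statement separately. For the existence claim, I would go through each family of \cref{teoA} and exhibit an explicit subgroup from the displayed list, using the subfield embeddings $\PSL_2(p^a)\hookrightarrow \PSL_2(p^b)$ whenever $a\mid b$, and $\Sz(2^a)\hookrightarrow \Sz(2^b)$ whenever $a\mid b$. For the ``moreover'' clause I would quote the subgroup classifications recorded in \cite[Propositions 3.3 and 3.6]{OS}, which in the $\PSL_2$ case are due to Dickson and in the $\Sz$ case to Suzuki.

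For $G=\PSL_2(2^k)$ with $k\geq 2$, I would take $\PSL_2(2^p)\leq G$ for any prime $p\mid k$, landing in the first item of the list. For $G=\Sz(2^k)$ with $k\geq 3$ odd, take $\Sz(2^p)\leq G$ for any (necessarily odd) prime $p\mid k$, landing in the fourth item. The interesting family is $G=\PSL_2(q)$ with $q=p^k$ odd, $q\equiv \pm 3\pmod 8$, and $q\geq 5$. Since $r^2\equiv 1\pmod 8$ for every odd $r$, one checks that $k$ must be odd and $p\equiv \pm 3\pmod 8$. I would then split on $p$: if $p=3$, pick an odd prime $r\mid k$ and take $\PSL_2(3^r)\leq G$ (second item); if $p=5$, take $\PSL_2(5)\cong A_5\cong \PSL_2(4)\leq G$ (first item); if $p\geq 7$, take the prime field subgroup $\PSL_2(p)\leq G$, which lies in the third item when $p\equiv \pm 3\pmod 5$, while in the remaining case $p\equiv \pm 1\pmod 5$ Dickson's theorem embeds an $A_5\cong \PSL_2(4)$ into $\PSL_2(p)$, so $G$ again contains a group from the first item.

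For the ``moreover'' clause the argument is essentially a lookup in the classifications of \cite[Propositions 3.3 and 3.6]{OS}: the only candidate nonsolvable subgroups of $\PSL_2(q)$ are the subfield subgroups $\PSL_2(q')$ together with the exceptional simple subgroups $A_5$, $S_4$, and $\PGL_2$-type subgroups, and for $\Sz(q)$ only the subfield subgroups $\Sz(q')$ arise. In each item of the displayed list the primality of the exponent forces every proper subfield subgroup ($\PSL_2(2)\cong S_3$, $\PSL_2(3)\cong A_4$, or $\Sz(2)$) to be solvable, while the congruence conditions in the third item exclude the exceptional $A_5$, $S_4$, and $\PGL_2$-type subgroups of $\PSL_2(p)$ via the standard divisibility criteria on $|\PSL_2(p)|$ modulo $5$ and $8$. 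Hence every proper subgroup is solvable. The main obstacle is purely the arithmetic bookkeeping in the third family, keeping the case split on $p\bmod 8$ and $p\bmod 5$ exhaustive; no conceptual step is deep once Dickson's and Suzuki's subgroup lists are invoked.
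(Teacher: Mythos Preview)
The paper gives no proof of this proposition: it is stated as a direct consequence of \cite[Propositions 3.3 and 3.6]{OS}, with the introductory line ``From \cite[Propositions 3.3 and 3.6]{OS} we have'' standing in for the argument. Your proposal is therefore not competing with a proof in the paper but rather spelling out what that citation amounts to, and it does so correctly. The case split on the residue of $p$ modulo $5$ in the odd-characteristic family, together with the subfield embeddings and the exceptional isomorphism $\PSL_2(5)\cong A_5\cong \PSL_2(4)$, is exactly the intended content.

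One small slip: in the ``moreover'' clause you list $S_4$ among the ``exceptional simple'' (nonsolvable) subgroups of $\PSL_2(q)$, but $S_4$ is solvable and so does not need to be excluded. The only nonsolvable proper subgroups that can occur are $A_5$ and subfield-type subgroups $\PSL_2(q')$ or $\PGL_2(q')$; your divisibility check that $5\nmid |\PSL_2(q)|$ under the stated congruences (and the analogous checks for $\PSL_2(2^p)$, $\PSL_2(3^p)$ with $p$ an odd prime) correctly rules out $A_5$, and primality of the exponent disposes of subfield subgroups. With that correction the argument is complete.
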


\begin{theorem}\label{reduccion}
To prove \cref{CasacubertaDicks} it is enough to prove $P(G)$ 
for each group $G$ listed in \cref{minimalSimpleGroupsOfOliverSegev},
where $P(G)$ denotes the following proposition:
``there is a nontrivial representation in $\SO(n,\R)$
of the fundamental group of every acyclic $G$-complex obtained from $\XOS(G)$ by attaching $k\geq 0$ free orbits of $1$-cells and $(k+1)$ free orbits of $2$-cells''.

\begin{proof}
Let $G$ be a finite group and suppose that $X$ is a finite, acyclic $2$-dimensional fixed point free $G$-complex.
Let $N$ be the subgroup generated by all normal subgroups $N'\triangleleft G$ such that $X^{N'}\neq \emptyset$.
By \cref{teoB} we have that $Y=X^N$ is acyclic and the action of $G/N$ on $Y$ is essential and fixed point free.
Then $G/N$ must be one of the groups in \cref{teoA}.
We take a subgroup $K$ of $G/N$ isomorphic to one of the groups listed in \cref{minimalSimpleGroupsOfOliverSegev}.
Then by \cref{lemmaNonsolvableSubgroup} the action of $K$ on $Y$ is fixed point free.
Now, by \cref{Refinement} and by $P(K)$, it follows that $\pi_1(Y)$ admits a nontrivial representation in $\SO(n,\R)$. 
Therefore, by \cref{casoParticularConjeturaSubcomplejo}, $X$ cannot be contractible.
\end{proof}
\end{theorem}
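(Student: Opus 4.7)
The plan is to argue contrapositively: assuming $P(G)$ holds for every $G$ in the minimal list of Proposition~\ref{minimalSimpleGroupsOfOliverSegev}, I will derive a contradiction from any hypothetical fixed-point-free action of a finite group on a finite contractible $2$-complex. The argument is pure assembly of the results already established in Sections~\ref{sectionOS} and \ref{sectionReduction}.

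Suppose for contradiction that a finite group $G$ acts on a finite contractible $2$-complex $X$ with $X^G = \emptyset$. Let $N \triangleleft G$ be the subgroup generated by all $N' \triangleleft G$ with $X^{N'} \neq \emptyset$, and set $Y = X^N$. By Theorem~\ref{teoB}, $Y$ is acyclic and the quotient $G/N$ acts essentially and fixed-point-freely on $Y$. Theorem~\ref{teoA} then forces $G/N$ to be one of the simple groups in the Oliver--Segev list, and Proposition~\ref{minimalSimpleGroupsOfOliverSegev} produces a subgroup $K \leq G/N$ isomorphic to one of the minimal simple groups appearing in that proposition. Since $K$ is nonsolvable, Lemma~\ref{lemmaNonsolvableSubgroup} guarantees $Y^K = \emptyset$, so $K$ acts fixed-point-freely on the finite acyclic $2$-complex $Y$.

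With $K \actson Y$ now in hand, Theorem~\ref{Refinement} produces a finite acyclic fixed-point-free $K$-complex $Y'$ obtained from $\XOS(K)$ by attaching $k \geq 0$ free orbits of $1$-cells and $k+1$ free orbits of $2$-cells, together with an epimorphism $\pi_1(Y) \to \pi_1(Y')$. The hypothesis $P(K)$ then supplies a nontrivial representation $\pi_1(Y') \to \SO(n,\R)$, which pulls back along the epimorphism to a nontrivial representation of $\pi_1(Y)$. Invoking \cref{casoParticularConjeturaSubcomplejo}, which is tailored precisely to convert such a nontrivial linear representation of $\pi_1$ of the fixed subcomplex $Y = X^N$ into an obstruction to the contractibility of the ambient $X$, furnishes the desired contradiction.

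No single step in the chain is difficult on its own; the real work is organizational, and the step most liable to slip is checking compatibility at each hand-off, especially that the subgroup $K$ extracted via Proposition~\ref{minimalSimpleGroupsOfOliverSegev} inherits a fixed-point-free $2$-dimensional acyclic action on $Y$ so that Theorem~\ref{Refinement} may legitimately be applied. The substantive obstacle facing the overall program, of course, lies outside this reduction: one must actually verify $P(G)$ for each minimal simple group $G$, which is exactly what the remainder of the paper takes up for $G = A_5$ via the moduli-of-representations and degree argument outlined in the introduction.
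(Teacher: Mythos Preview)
Your proof is correct and follows essentially the same approach as the paper's own proof: reduce to the essential quotient $G/N$ acting on $Y=X^N$ via Theorem~\ref{teoB}, extract a minimal nonsolvable subgroup $K$ via Proposition~\ref{minimalSimpleGroupsOfOliverSegev} and Lemma~\ref{lemmaNonsolvableSubgroup}, apply Theorem~\ref{Refinement} and $P(K)$ to obtain a nontrivial representation of $\pi_1(Y)$, and finish with Proposition~\ref{casoParticularConjeturaSubcomplejo}. Your version is slightly more explicit about the epimorphism $\pi_1(Y)\to\pi_1(Y')$ and the pullback of the representation, but the logic is identical.
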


\begin{remark}
In \cref{A5ComplexHasRepInSO3} we prove the group $A_5\cong \PSL_2(2^2)$ satisfies the condition $P$ in \cref{reduccion}.
\end{remark}

The following explains why our particular choice of $\XOS(G)$ and the way the free orbits of $1$-cells are attached is not relevant.
\begin{proposition}\label{UnicidadXOS}
Any two choices for $\XOS(G)$ are $G$-homotopy equivalent. Moreover, attaching $k\geq 0$ free orbits of $1$-cells to any two choices for $\XOS(G)$ produces $G$-homotopy equivalent graphs.
\begin{proof}
Since any choice of $\XOS(G)$ is a universal $(G,\mathcal{SLV}-\{1\})$-complex, the first part follows from \cite[Proposition A.6]{OS}.
The second part follows easily from the first and the gluing theorem for adjunction spaces {\cite[7.5.7]{TopologyGroupoids}}.
\end{proof}
\end{proposition}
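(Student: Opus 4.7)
The plan is to recognise $\XOS(G)$ as a universal $(G,\FF)$-complex for the family $\FF = \SLV \setminus \{1\}$ of nontrivial solvable subgroups of $G$, and then to invoke the standard uniqueness of such complexes. The key preliminary is to check that $\XOS(G)^H$ is contractible for every $H \in \FF$. To see this, attach the free orbit of $2$-cells to $\XOS(G)$ to obtain an acyclic $G$-complex $X$ as in \cref{example3.4,example3.5,example3.7}. Since that $2$-orbit is free, $X^H = \XOS(G)^H$ whenever $H \neq 1$; by \cref{AciclicoOVacio}, $X^H$ is acyclic, and being at most $1$-dimensional it must be a tree, hence contractible. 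The isotropy subgroups of $\XOS(G)$ all lie in $\FF$ by the explicit constructions in \cref{sectionOS}.

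Having verified the universal property, the first assertion follows from the general fact that any two universal $(G,\FF)$-complexes are $G$-homotopy equivalent. Mutually inverse equivariant maps are built orbit-by-orbit: on an orbit of type $G/H$ an equivariant map is determined by the image of a representative cell, which may be chosen inside the target fixed-point set $Y^H$; the contractibility of $Y^H$ then permits one to extend over cells of higher dimension and to produce the required $G$-homotopies between the two composites. This is precisely the content of \cite[Proposition A.6]{OS}.

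For the second assertion, fix a $G$-homotopy equivalence $\varphi\co \XOS(G) \to \XOS(G)'$ produced above. A free orbit of $1$-cells in either graph is specified by an equivariant attaching map $\psi \co G \times S^0 \to \XOS(G)^{(0)}$, and since $\XOS(G)'$ is path connected, the map $\varphi \circ \psi$ is equivariantly homotopic to any prescribed attaching map $\psi' \co G \times S^0 \to \XOS(G)'^{(0)}$ (join the images of the two generating $0$-cells by paths in $\XOS(G)'$ and extend equivariantly, which is possible because the orbit of these $0$-cells is free). Then the equivariant gluing theorem for adjunction spaces \cite[7.5.7]{TopologyGroupoids} yields a $G$-homotopy equivalence of the pushouts. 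Iterating the construction over the $k$ attached free $1$-orbits completes the proof.

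The main technical obstacle is the verification of the universal property in the preliminary step, since the uniqueness statement \cite[Proposition A.6]{OS} requires contractibility of the fixed-point sets rather than mere acyclicity; this is where one must combine Segev's theorem with the low-dimensionality of $\XOS(G)^H$. The second part is then a formal consequence of the gluing theorem, once the attaching maps are reconciled through $\varphi$.
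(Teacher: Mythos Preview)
Your proof is correct and follows the same approach as the paper: identify $\XOS(G)$ as a universal $(G,\SLV\setminus\{1\})$-complex and invoke \cite[Proposition A.6]{OS} for uniqueness, then apply the gluing theorem \cite[7.5.7]{TopologyGroupoids} for the second part. You supply the details the paper omits---namely why each $\XOS(G)^H$ is contractible (acyclic and $1$-dimensional, hence a tree) and why the attaching maps of the free $1$-orbits can be equivariantly homotoped through $\varphi$---but the skeleton of the argument is identical.
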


\begin{corollary}\label{PosiblesPi1NoDependenDeGammaOS}
Let $\Gamma$ be a graph obtained from $\XOS(G)$ by attaching $k\geq 0$ free orbits of $1$-cells.
The set of $G$-homotopy equivalence classes of $2$-dimensional acyclic fixed point free $G$-complexes with $1$-skeleton $\Gamma$ does not depend on the particular choice of $\XOS(G)$ or the way the $k$ free orbits of $1$-cells are attached.
In particular, the set of isomorphism classes of groups that occur as the fundamental group of such spaces does not depend on such choices.
\begin{proof}
Again, this is an easy application of {\cite[7.5.7]{TopologyGroupoids}}.
\end{proof}
\end{corollary}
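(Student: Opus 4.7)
The plan is to upgrade the $G$-homotopy equivalence of $1$-skeleta supplied by \cref{UnicidadXOS} to a $G$-homotopy equivalence of the full $2$-complexes by a standard cell-attachment/gluing argument.

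Concretely, let $\Gamma$ and $\Gamma'$ be two graphs of the form described in the statement (possibly coming from different choices of $\XOS(G)$ and different attachments of the $k$ free orbits of $1$-cells). By the second half of \cref{UnicidadXOS}, there is a $G$-homotopy equivalence $\phi\co \Gamma\to \Gamma'$, with a $G$-homotopy inverse $\psi\co \Gamma'\to \Gamma$. Now let $X$ be any $2$-dimensional acyclic fixed point free $G$-complex with $X^{(1)}=\Gamma$. By \cref{Refinement} (and the discussion preceding it) the $2$-cells of $X$ form $k+1$ free orbits, so $X$ is built from $\Gamma$ by attaching $2$-cells via a $G$-equivariant map
\[
\Phi\co G\times \bigsqcup_{i=0}^{k} S^{1}_{i} \longrightarrow \Gamma,
\]
where $G$ acts freely on the domain by left translation on the $G$-factor. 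Push these attaching maps forward: define $\Phi'=\phi\circ \Phi$ and form the $G$-pushout $X'$ of $\Gamma'\xleftarrow{\Phi'} G\times \bigsqcup S^1_i \hookrightarrow G\times \bigsqcup D^2_i$. Then $X'$ is a $G$-complex with $(X')^{(1)}=\Gamma'$ and with $k+1$ free orbits of $2$-cells.

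It remains to check that $X'\simeq_G X$. This is the equivariant version of the gluing theorem \cite[7.5.7]{TopologyGroupoids}: since $\phi$ is a $G$-homotopy equivalence of $1$-complexes, and since the attaching maps are related by $\Phi'=\phi\circ \Phi$ (so in particular $\psi\circ\Phi'\simeq_G \Phi$), the induced map of pushouts is a $G$-homotopy equivalence. Acyclicity of $X'$ is automatic because $X'\simeq X$ and $X$ is acyclic, and the $G$-action on $X'$ remains fixed point free for the same reason (or directly because all isotropy on the new $2$-cells is trivial and $(\Gamma')^{G}=\emptyset$). Applied in both directions, this defines a bijection between the sets of $G$-homotopy equivalence classes in question, proving the first assertion. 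The ``in particular'' clause is immediate, since $\pi_1$ is a $G$-homotopy invariant (indeed a homotopy invariant).

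The only slightly delicate point is to ensure equivariance throughout: one must choose $\phi$ and $\psi$ equivariantly (given by \cref{UnicidadXOS}) and exploit that each orbit of $2$-cells is free, so specifying the attaching map of a single representative $S^1_i$ and extending by the $G$-action yields a well-defined equivariant attaching map. Once this is set up, the argument is exactly the standard pushout/gluing lemma carried out in the category of $G$-spaces.
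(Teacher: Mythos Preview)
Your proposal is correct and is exactly the approach the paper has in mind: the one-line proof in the paper cites the gluing theorem \cite[7.5.7]{TopologyGroupoids}, and you have spelled out precisely how that theorem is to be applied in the $G$-equivariant setting, using the $G$-homotopy equivalence of $1$-skeleta from \cref{UnicidadXOS}. One small cosmetic point: your appeal to \cref{Refinement} to conclude that the $2$-cells form $k+1$ free orbits is not quite the right citation (that theorem produces such a complex rather than characterising complexes with a prescribed $1$-skeleton); the direct reason is that for $1\neq H$ the fixed set $\Gamma^H=\XOS(G)^H$ is a tree, so an acyclic $X$ can carry no $2$-cell with nontrivial stabiliser, and then \cref{lemma2.3} and \cref{indices} give the count---but in any case the gluing argument you wrote does not actually need this fact.
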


\section{Brown's short exact sequence}
\label{sectionBrown}
Using Bass--Serre theory, K.S. Brown gave a method to produce a presentation for a group $G$ acting on a simply connected complex $X$ \cite[Theorem 1]{BrownPresentations}.
When $X$ is not simply connected, Brown describes a presentation for an extension $\wt{G}_X$ of $G$ by $\pi_1(X)$  \cite[Theorem 2]{BrownPresentations}.
The group $\wt{G}_X$ has a description as a quotient of the fundamental group of a graph of groups.
A similar result in the simply connected case was given by Corson \cite[Theorem 5.1]{CorsonComplexesOfGroups} in terms of \textit{complexes of groups} (higher dimensional analogues of graphs of groups).

Using Brown's result we translate the $A_5$ case of \cref{CasacubertaDicks} into a nice looking problem in combinatorial group theory.
This translation can be done in general, but to obtain similar results for the rest of the groups $G$ that appear in \cref{reduccion} we need a choice of $\XOS(G)$ and presentations for the stabilizers of its vertices.

In Brown's original formulation, the result deals with actions that need not to be admissible (Brown uses the term $G-CW$-complex in a different way than us).
Since the actions we are interested in are admissible, we state Brown's result only in that case.

Let $X$ be a connected $G$-complex. By admissibility of the action, the group $G$ acts on the set of oriented edges.
If $e$ is an oriented edge, the same $1$-cell with the opposite orientation is denoted by ${e}^{-1}$.
Each oriented edge $e$ has a \textit{source} and \textit{target}, denoted by $s(e)$ and $t(e)$ and for every $g\in G$ we have $g\cdot s(e)= s(g\cdot e)$ and $g\cdot t(e)= t(g\cdot e)$.

To obtain a description of the group $\wt{G}_X$ we need a number of choices that we now specify.
For each $1$-cell of $X$ we choose a preferred orientation in such a way that these orientations are preserved by $G$.
This determines a set $P$ of oriented edges.
We choose a \textit{tree of representatives} for $X/ G$.
That is, a tree $T\subset X$ such that the vertex set $V$ of $T$ is a set of representatives of $X^{(0)}/G$.
Such tree always exists and the $1$-cells of $T$ are inequivalent modulo $G$. We give an orientation to the $1$-cells of $T$ so that they are elements of $P$.
We also choose a set of representatives $E$ of $P/G$ in such a way that $s(e)\in V$ for every $e\in E$ and such that each oriented edge of $T$ is in $E$.
If $e$ is an oriented edge, the unique element of $V$ that is equivalent to $t(e)$ modulo $G$ will be denoted by $w(e)$.
For every $e\in E$ we fix an element $g_e\in G$ such that $t(e)=g_e\cdot w(e)$.
If $e\in T$, we specifically choose $g_e=1$.
For each orbit of $2$-cells we choose a closed edge path $\tau$ based at a vertex of $T$ and representing the attaching map for this orbit of $2$-cells.
Let $F$ be the set given by these closed edge paths.

The group $\wt{G}_X$ is defined as a quotient of $$\BigFreeProd_{v\in V} G_v * \BigFreeProd_{e \in E} \Z$$ by 
certain relations.
In order to define these relations we introduce some notation.
If $v\in V$ and $g\in G_v$ we denote the copy of $g$ in the free factor $G_v$ by $g_v$.
The generator of the copy of $\Z$ that corresponds to $e$ is denoted by $x_e$.
The relations are the following:

(i) $x_e=1$ if $e\in T$.

(ii) $x_e^{-1} g_{s(e)} x_e = (g_e^{-1} g g_e)_{w(e)}$ for every $e\in E$ and $g\in G_e$.

(iii) $r_\tau=1$ for every $\tau\in F$.

We state Brown's theorem before giving the definition of the element $r_\omega$ associated to a closed edge path $\omega$.

\begin{theorem}[{Brown, \cite[{Theorems 1 and 2}]{BrownPresentations}}]\label{resultadoBrown}
The group
$$\wt{G}_X= \frac{\BigFreeProd_{v\in V}G_v * \BigFreeProd_{e \in E} \Z  }{\llangle R \rrangle}$$ where $R$ consists of relations (i)-(iii)
is an extension
$$1\to \pi_1(X,x_0)\xrightarrow{i} \wt{G}_X \xrightarrow{\overline{\phi}} G\to 1.$$
The map $\overline{\phi}$ is defined passing to the quotient the coproduct $\phi$ of the inclusions $G_v\to G$ and the mappings $\Z\to G$ given by $x_e\mapsto g_e$.
The map $i$ sends a closed edge path $\omega$ based at $x_0\in V$ to the class of $r_\omega$.
\end{theorem}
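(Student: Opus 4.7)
The natural strategy is to reduce to standard Bass--Serre theory by first treating the $1$-skeleton and then attaching the $2$-cells equivariantly. I would begin with the action $G\actson X^{(1)}$: the quotient $X^{(1)}/G$, decorated with vertex groups $G_v$ ($v\in V$), edge groups $G_e$ ($e\in E$), and structural monomorphisms given by the inclusion $G_e\hookrightarrow G_{s(e)}$ together with conjugation by $g_e^{-1}$ into $G_{w(e)}$, carries the structure of a graph of groups $\mathbb{G}$. Its fundamental group, in the standard Bass--Serre presentation that uses the tree of representatives $T$ and the chosen orientations, is exactly $\FreeProdInline_{v\in V}G_v * \FreeProdInline_{e\in E} \Z$ modulo relations (i) (collapse of $T$) and (ii) (the edge-stabiliser compatibility relation). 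Thus the quotient by relations (i)--(ii) only has its usual Bass--Serre meaning.

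The next step is to invoke the structural theorem of Bass--Serre theory in its ``covering'' form. Because the action of $G$ on the connected graph $X^{(1)}$ is admissible, $\pi_1(\mathbb{G})$ acts on a tree that covers $X^{(1)}$ equivariantly, yielding a short exact sequence
\begin{equation*}
1 \longrightarrow \pi_1(X^{(1)}, x_0) \longrightarrow \pi_1(\mathbb{G}) \xrightarrow{\overline{\phi}} G \longrightarrow 1,
\end{equation*}
where $\overline{\phi}$ is defined on free-factor generators by the rule stated in the theorem; the kernel is $\pi_1(X^{(1)})$ precisely because this group measures the failure of $X^{(1)}$ to be a tree. To pass from $X^{(1)}$ to $X$, an equivariant van Kampen argument shows that attaching an orbit of $2$-cells with representative attaching map $\tau$ kills the class of $[\tau]$ in $\pi_1(X^{(1)}, x_0)$, so $\pi_1(X, x_0) \cong \pi_1(X^{(1)}, x_0) / \llangle r_\tau : \tau\in F \rrangle$. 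Imposing relations (iii) in $\pi_1(\mathbb{G})$ (which already lie in the kernel) descends the above exact sequence to the claimed extension $1\to\pi_1(X,x_0)\to\wt{G}_X\to G\to 1$.

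The main technical point I expect to have to grind through is the combinatorial definition of the element $r_\omega$ attached to a closed edge path $\omega$ based at $x_0\in V$, and the verification that the assignment $[\omega]\mapsto r_\omega$ is a well-defined group homomorphism realising the kernel of $\overline{\phi}$. Concretely, each oriented edge of $\omega$ should be written as $g\cdot e^{\pm 1}$ with $e\in E$; then $r_\omega$ is the product of the letters $x_e^{\pm 1}$ with correction elements from the vertex groups inserted between them to shift successive endpoints back into $V$ via $T$. One must check that this procedure is independent of the choices of $g$ modulo relations (i)--(iii), is multiplicative under concatenation, sends inverse paths to inverses, and kills the boundary of each $2$-cell in $F$; once this bookkeeping is carried out, the pieces assemble to produce exactly Brown's extension.
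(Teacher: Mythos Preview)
The paper does not prove this statement at all: it is quoted verbatim as Brown's result, with the citation \cite[Theorems 1 and 2]{BrownPresentations}, and no argument is supplied. So there is nothing in the paper to compare your proposal against.

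That said, your sketch is essentially the standard route to Brown's theorem and is in the right spirit: Bass--Serre theory applied to the $G$-action on $X^{(1)}$ gives the extension $1\to\pi_1(X^{(1)})\to\pi_1(\mathbb{G})\to G\to 1$, and then one quotients by the normal closure of the $2$-cell relators. One point to tighten: when you say that attaching an orbit of $2$-cells ``kills the class of $[\tau]$'' so that $\pi_1(X,x_0)\cong \pi_1(X^{(1)},x_0)/\llangle r_\tau:\tau\in F\rrangle$, you need that the subgroup killed is the normal closure of the \emph{entire orbit} $\{g\cdot\tau:g\in G,\,\tau\in F\}$ inside $\pi_1(X^{(1)})$, not just of $\{r_\tau:\tau\in F\}$. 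The reason this nonetheless matches relations (iii) in $\wt{G}_X$ is that the normal closure of $\{r_\tau\}$ in $\pi_1(\mathbb{G})$, intersected with the kernel $\pi_1(X^{(1)})$, equals the normal closure of the full orbit in $\pi_1(X^{(1)})$; this is the content of what the paper records separately as \cref{AccionBrown} (conjugation by a preimage of $g$ sends $i(\llbracket\omega\rrbracket)$ to $i(\llbracket g\omega\rrbracket)$). You should make that step explicit.
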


Now we explain how to obtain the elements $r_\omega$. 
If $\alpha$ is an oriented edge, we define 
$$\varepsilon(\alpha)=\begin{cases} 
\hspace{9pt} 1 & \text{$\alpha\in P$} \\
-1 & \text{if $\alpha\notin P$}
                       \end{cases}$$
and we can always take $e\in E$ and $g\in G$ such that $\alpha=g e^{\varepsilon(\alpha)}$. Note that $e$ is unique but $g$ is not. Moreover, if $\alpha$ starts at $v\in V$, we can write
$$
\alpha = \begin{cases}
          h e       & \text{ with $h\in G_{s(e)}$, if $\alpha\in P$} \\
           h g_e^{-1} e^{-1} & \text{ with $h\in G_{w(e)}$, if $\alpha\notin P$} \\
         \end{cases}
$$
Again, $h$ is not unique.

Now if $\omega = (\alpha_1,\ldots, \alpha_n)$ is a closed edge path starting at a vertex $v_0\in V$ we define the element $r_\omega\in \BigFreeProd_{v\in V}G_v * \BigFreeProd_{e \in E} \Z$.
Recursively, we define some sequences.
Since the oriented edge $\alpha_1$ starts at $v_0\in V$, we can obtain an oriented edge $e_1$ and an element $h_1\in G_{v_0}$ as above.
We set $\varepsilon_1=\varepsilon(\alpha_1)$ and $g_1=h_1 g_{e_1}^{\varepsilon_1}$.
Set $v_1=w(e_1)$ if $\alpha_1\in P$ and otherwise $v_1=s(e_1)$.
Now suppose we have defined $e_1,\ldots, e_k$, $h_1,\ldots, h_k$, $\varepsilon_1,\ldots, \varepsilon_k$, $g_1,\ldots,g_k$ and $v_1,\ldots, v_k$ such that the oriented edge $(g_1g_2\cdots g_k)^{-1}\alpha_{k+1}$ starts at $v_k\in V$. We can obtain an oriented edge $e_{k+1}$ and an element $h_{k+1}\in G_{v_{k}}$ as before.
We set $\varepsilon_{k+1}=\varepsilon(\alpha_{k+1})$ and $g_{k+1}=h_{k+1} g_{e_{k+1}}^{\varepsilon_{k+1}}$.
Set $v_{k+1}=w(e_{k+1})$ if $\alpha_{k+1}\in P$ and otherwise $v_{k+1}=s(e_{k+1})$.
When we conclude, we have an element $g_1g_2\cdots g_n\in G_{v_0}$.
Finally the word associated to $\omega$ is given by
$$r_\omega \, = \, {(h_1)}_{v_0} x_{e_1}^{\varepsilon_1}\,\,  {(h_2)}_{v_1} x_{e_2}^{\varepsilon_2}\,\, \cdots \,\,{(h_n)}_{v_{n-1}} x_{e_n}^{\varepsilon_n} \,\,(g_1g_2\cdots g_n)_{v_0}^{-1}.$$

A closed edge path $\omega$ in $X$ determines a conjugacy class $\llbracket\omega\rrbracket$ of $\pi_1(X)$.
The following describes the conjugation action of $\wt{G}_X$ on $\pi_1(X)$.
\begin{proposition}[{\cite[Proposition 1]{BrownPresentations}}]\label{AccionBrown}
Let $\omega$ be a closed edge path in $X$ and $g\in G$.
Then the conjugacy classes $i(\llbracket\omega\rrbracket)$ and $i(\llbracket g\omega\rrbracket)$ are contained in the same $\wt{G}_X$-conjugacy class. 
Moreover for any element $\wt{g}\in \overline{\phi}^{-1}(g)$ we have 
$\wt{g}i( \llbracket \omega\rrbracket) \wt{g}^{-1} = i( \llbracket g\omega\rrbracket)$.
\end{proposition}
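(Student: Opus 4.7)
The plan is to verify the identity $\wt{g}\, i(\llbracket \omega \rrbracket)\, \wt{g}^{-1} = i(\llbracket g\omega \rrbracket)$ directly, from which the first assertion is immediate. First I would observe that it suffices to check the identity for a single lift $\wt{g}$ of each $g$: any two lifts differ by multiplication by some $k \in i(\pi_1(X))$, and conjugation by such $k$ fixes every conjugacy class contained in $i(\pi_1(X))$. Hence both sides of the claimed identity, viewed as functions of $g$, give homomorphisms from $G$ into the group of permutations of the conjugacy classes of $\pi_1(X)$ (the left hand side by the outer action of the extension, the right hand side because $G$ acts on $X$), so it suffices to verify equality on a generating set of $G$.

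Since $\overline{\phi}$ is surjective and carries $g_v \mapsto g$ and $x_e \mapsto g_e$, the subset $\bigcup_{v \in V} G_v \cup \{g_e : e \in E\}$ generates $G$. I would handle the two cases using the natural lifts $g_v$ and $x_e$ respectively. For $g \in G_v$, choose a closed edge path $\omega = (\alpha_1, \ldots, \alpha_n)$ based at $v$ (legitimate since the conjugacy class in $\pi_1(X)$ does not depend on the basepoint). Then $g\omega$ is also based at $v$, and the decomposition $\alpha_1 = h_1 e_1^{\varepsilon_1}$ becomes $g\alpha_1 = (gh_1) e_1^{\varepsilon_1}$; moreover, for $i \geq 2$ the data $(h_i, e_i, \varepsilon_i, v_i)$ produced for $g\omega$ coincide with those for $\omega$, because the new first-step element $g \cdot g_1$ absorbs the extra copy of $g$, leaving $(g\cdot g_1)^{-1} (g\alpha_2) = g_1^{-1}\alpha_2$, and inductively the same holds for every further step. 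A direct computation then yields $r_{g\omega} = g_v\, r_\omega\, g_v^{-1}$ on the nose in $\wt{G}_X$.

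The main obstacle is the edge case $g = g_e$ with lift $\wt{g} = x_e$, since $g_e$ need not fix any vertex of $V$ and thus moves the basepoint outside $V$. Here my plan is to take $\omega$ based at $w(e) \in V$; the loop $g_e \omega$ is then based at $t(e) = g_e w(e)$, and its free homotopy class is represented equivalently by the loop $e \cdot g_e\omega \cdot e^{-1}$ at $s(e) \in V$ (with $e$ denoting the edge as a one-step path). Expanding $r_{e \cdot g_e \omega \cdot e^{-1}}$ via the recipe produces an outer $x_e(\cdots) x_e^{-1}$, and the defining relation (ii), $x_e^{-1} h_{s(e)} x_e = (g_e^{-1} h g_e)_{w(e)}$ for $h \in G_e$, absorbs precisely the conjugations appearing in the middle, identifying the inner expression with $r_\omega$. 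The careful bookkeeping of intermediate vertices $v_i$ and elements $h_i$ through the $g_e$-translation, and keeping track of which pieces of the product lie in $G_{s(e)}$ versus $G_{w(e)}$, is the heart of the proof.
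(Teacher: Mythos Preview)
The paper does not give its own proof of this proposition: it is quoted verbatim from Brown's article and cited as \cite[Proposition 1]{BrownPresentations}, with no argument supplied here. So there is nothing in this paper to compare your attempt against.

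That said, your approach is sound. Reducing to a single lift per $g$ and then to a generating set is the natural strategy, and your computation for $g\in G_v$ is correct: with $\omega$ based at $v$ one gets $h_1' = gh_1$, $h_i' = h_i$ for $i\geq 2$, and $g_1'\cdots g_n' = g\,g_1\cdots g_n$, whence $r_{g\omega} = g_v\, r_\omega\, g_v^{-1}$ in $\wt G_X$. For $g=g_e$ your idea of replacing $g_e\omega$ by the freely homotopic loop $e\cdot g_e\omega\cdot e^{-1}$ based at $s(e)$ is exactly right; the outer $x_e(\cdots)x_e^{-1}$ then appears and relation (ii) handles the passage between the copies of $G_e$ in $G_{s(e)}$ and $G_{w(e)}$. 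The only thing to be careful about is the ``bookkeeping'' you flag: when expanding $r_{e\cdot g_e\omega\cdot e^{-1}}$, the edges of $g_e\omega$ start at $t(e)\notin V$, so you must first translate each $g_e\alpha_i$ back by the running product before invoking the decomposition; doing this shows the inner data agree with those of $\omega$ up to the $g_e$-conjugation absorbed by (ii). This is straightforward but should be written out; once it is, the proof is complete.
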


The following proposition summarizes many ideas of this section.

\begin{proposition}
\label{prop3by3}
Let $\Gamma$ be a $G$-graph and let $w_1,\ldots,w_k\in \ker( \overline{\phi}\co \wt{G}_\Gamma\to G)$.
Let $X$ be a $G$-complex obtained by attaching orbits of $2$-cells to $\Gamma$ along closed edge paths $\tau_1,\ldots,\tau_k$ such that $r_{\tau_i}=w_i$.
Then we have a diagram with exact rows and columns

\begin{center}
\begin{tikzcd}
  & 1\arrow{d}{} & 1\arrow{d}{} & 1\arrow{d}{} & \\
1\arrow{r}{} & \llangle G\cdot \tau_i \rrangle \arrow[hook]{d}{}\arrow[]{r}{i}[swap]{\sim} & \llangle w_i \rrangle^{\wt{G}_\Gamma}\arrow[hook]{d}{}\arrow{r}{} & 1\arrow{r}{}\arrow{d}{} & 1 \\
1 \arrow{r}{}& \pi_1(\Gamma)\arrow{d}{i_*}\arrow{r}{i} & \wt{G}_\Gamma\arrow{d}{}\arrow{r}{\overline{\phi}} & G \arrow[equals]{d}{}\arrow{r}{}& 1 \\
1 \arrow{r}{} & \pi_1( X ) \arrow{r}{i}\arrow{d}{}& \wt{G}_X \arrow{r}{\overline{\phi}}\arrow{d}{}& G\arrow{r}{}\arrow{d}{} & 1 \\
  & 1 & 1 & 1 &
\end{tikzcd}
\end{center}
and we have
$\displaystyle H_1(X)\cong \frac{N}{\llangle w_i \rrangle^{\wt{G}_\Gamma}[N,N] }$,
where $N=\ker(\overline{\phi}\colon \wt{G}_\Gamma\to G)$.
\end{proposition}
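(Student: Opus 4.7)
The plan is to assemble the diagram by combining two applications of Brown's theorem (\cref{resultadoBrown}) with the van Kampen theorem, then to identify the leftmost column using \cref{AccionBrown}, and finally to abelianize to obtain the formula for $H_1(X)$.

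First, I would take the middle row to be Brown's short exact sequence for $\Gamma$ and the bottom row to be the analogous sequence for $X$, both direct from \cref{resultadoBrown}. For the middle column, I would note that the presentation of $\wt{G}_X$ supplied by Brown's theorem differs from the presentation of $\wt{G}_\Gamma$ only by the additional relations $r_{\tau_j}=w_j$ for $j=1,\ldots,k$; this produces a canonical isomorphism
\[
\wt{G}_X\;\cong\;\wt{G}_\Gamma\big/\llangle w_1,\ldots,w_k\rrangle^{\wt{G}_\Gamma}
\]
and hence gives the middle column of the diagram. For the leftmost column, the van Kampen theorem yields $\pi_1(X)\cong \pi_1(\Gamma)/\llangle G\cdot\tau_i\rrangle$, since the effect of attaching the orbits $G\cdot\tau_i$ of $2$-cells to $\Gamma$ is precisely to kill the conjugacy classes $\llbracket g\cdot\tau_i\rrbracket$ in $\pi_1(\Gamma)$.

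The content of the top row, and the step I expect to be the main (though mild) obstacle, is the identification $i(\llangle G\cdot\tau_i\rrangle)=\llangle w_i\rrangle^{\wt{G}_\Gamma}$. This is exactly what \cref{AccionBrown} is designed to provide: for any $\wt g\in \wt{G}_\Gamma$ with $\overline\phi(\wt g)=g$, one has $\wt g\,i(\llbracket\tau_i\rrbracket)\,\wt g^{-1}=i(\llbracket g\cdot\tau_i\rrbracket)$. Hence $i$ carries the $\pi_1(\Gamma)$-normal closure of the $G$-orbits $\{g\cdot\tau_i\}$ bijectively onto the $\wt{G}_\Gamma$-normal closure of $\{w_1,\ldots,w_k\}$; injectivity of $i$ from \cref{resultadoBrown} upgrades this bijection to an isomorphism. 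Exactness of all remaining rows and columns is then a routine $3\times 3$ diagram chase.

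Finally, abelianizing the isomorphism $\pi_1(X)\cong N/\llangle w_i\rrangle^{\wt{G}_\Gamma}$ read off from the leftmost column (with $N=\ker\overline\phi$) and using that the abelianization of a quotient $N/K$ is $N/(K[N,N])$ gives
\[
H_1(X)\;\cong\;\frac{N}{\llangle w_i\rrangle^{\wt{G}_\Gamma}[N,N]}.
\]
The only truly delicate point is matching the two normal closures in the top row, and that is entirely absorbed by \cref{AccionBrown}; everything else is formal manipulation of Brown's presentation together with the standard computation of $\pi_1$ for a complex built by attaching $2$-cells.
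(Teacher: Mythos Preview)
Your proposal is correct and is precisely the argument the paper intends: the proposition is stated there without proof as a summary of the section, and your assembly of the diagram from two instances of \cref{resultadoBrown}, van Kampen for the left column, and \cref{AccionBrown} for the identification in the top row is exactly how those ingredients combine. The abelianization step at the end is routine and matches the paper's formula.
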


\begin{remark}
If $X$ is a connected $G$-complex, the group $\wt{G}_X$ is isomorphic to the group formed by the pairs $(g,\wt{g})$ such that $g\in G$ and $\wt{g}$ is a lift of $g\co X\to X$ to the universal cover $\wt{X}$ of $X$ (see \cite{BrownPresentations}).
Suppose $Y$ is another $G$-complex and $h\co X\to Y$ is equivariant and a homotopy equivalence.
Let $\wt{h}\co \wt{X}\to \wt{Y}$ be a lift of $h$ to the universal covers.
Then if $g\in G$, for each lift $\wt{g}_X: \wt{X}\to \wt{X}$  of $g\co X\to X$ there is a unique lift $\wt{g}_Y\co \wt{Y}\to \wt{Y}$ of $g\co Y\to Y$ such that the following diagram commutes:
\begin{center}
\begin{tikzcd}
\wt{X}\arrow[]{r}[]{\wt{h}} \arrow[]{d}[swap]{\wt{g}_X} & \wt{Y} \arrow[]{d}[]{\wt{g}_Y} \\
\wt{X} \arrow[]{r}[swap]{\wt{h}} & \wt{Y}
\end{tikzcd}
\end{center}
Then it is easy to check that there is an isomorphism $\wt{G}_X\to \wt{G}_Y$ given by $\wt{g}_X\mapsto \wt{g}_Y$.
In particular, the isomorphism type of $\wt{G}_{\XOS(G)}$ does not depend on any choice.
\end{remark}


We now apply Brown's result for $G=A_5$.
Recall the construction of $\XOS(A_5)$ given in \cref{defGammaOSA5}.
Suppose that we have an acyclic $2$-complex $X$ obtained from $\XOS(A_5)$ by attaching a free $A_5$-orbit of $2$-cells.
We want to apply Brown's method to obtain a presentation for the extension $\wt{G}_X$.
We take $T=\{ e_{12}, e_{23}\}$.
Thus $V=\{v_1,v_2,v_3\}$.
We take $E=\{e_{12}, e_{23}, e_{31}\}$.
Note that we have $w(e)=t(e)$ for every $e\in E$.
We can take $g_e=1$ for every $e\in E$.

Then Brown's result gives
$$\wt{G}_X = \frac{(H_1*_{H_{12}} H_2 *_{H_{23}} H_3 )*_{H_{13}}}{\llangle w\rrangle}$$
We explain this.
First we amalgamate the groups $H_1$, $H_2$, $H_3$ identifying the copy of $H_{12}$ in $H_1$ with the copy of $H_{12}$ in $H_2$ and the copy of $H_{23}$ in $H_2$ with the copy of $H_{23}$ in $H_3$.
This comes from the relations of type (i) and (ii) for $e\in T$.
Then we form an HNN extension with stable letter $x=x_{e_{31}}$ that corresponds to the relation of type (ii) coming from $e_{31}$.
The associated subgroups of this HNN extension are the copies of $H_{13}$ in $H_1$ and $H_3$.
The  quotient by the word $w$ comes from the only relation of type (iii).

Now we obtain an explicit presentation for $\wt{G}_X$.
We have $\langle a,b \mid a^2, b^3, (ab)^3\rangle\cong H_1$ via $a\mapsto (2,5)(3,4)$, $b\mapsto (3,5,4)$.
We have $\langle b,c \mid b^3,c^2, (bc)^2\rangle\cong H_2$ via $b\mapsto (3,5,4)$, $c\mapsto (1,2)(3,5)$.
Finally $\langle c, d \mid c^2, d^2 , (cd)^5\rangle\cong H_3$ via $c\mapsto (1,2)(3,5)$, $d\mapsto (2,5)(3,4)$.
Thus we have a presentation
$$\wt{G}_X=\langle a,b,c,d,x\mid a^2, b^3, c^2, d^2, (ab)^3, (bc)^2,(cd)^5, xax^{-1}=d, w\rangle$$
where the word $w$ depends on the attaching map.
The mapping $\overline{\phi}\co \wt{G}_X\to A_5$ is given by $a\mapsto (2,5)(3,4)$, $b\mapsto (3,5,4)$, $c\mapsto (1,2)(3,5)$, $d\mapsto (2,5)(3,4)$ and $x\mapsto 1$.
Note that $\pi_1(X)$ is trivial if and only if $\overline{\phi}\co \wt{G}_X\to A_5$ is an isomorphism.
If we also take into account $k$ additional free orbits of $1$ and $2$ cells and we recall \cref{Refinement}, from Brown's result we obtain:

\begin{theorem}\label{TheoremEquivalenceA5CDAndWi}
The following are equivalent.
\begin{enumerate}
\item[(i)] Every finite, $2$-dimensional contractible $A_5$-complex has a fixed point.

\item[(ii)] There is no presentation of $A_5$ of the form
$$\langle a,b,c,d,x_0, \ldots, x_k \mid a^2, b^3, c^2, d^2, (ab)^3, (bc)^2,(cd)^5, x_0ax_0^{-1}=d, w_0, \ldots, w_k \rangle$$
with $w_0,\ldots, w_k\in \ker(\phi)$, where  $\phi\co  F(a,b,c,d,x_0,\ldots,x_k)\to A_5$ is given by $a\mapsto (2,5)(3,4)$, $b\mapsto (3,5,4)$, $c\mapsto (1,2)(3,5)$, $d\mapsto (2,5)(3,4)$ and $x_i\mapsto 1$.
\end{enumerate}

\end{theorem}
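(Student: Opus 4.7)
My plan is to deduce the equivalence directly from \cref{Refinement} and Brown's theorem (\cref{resultadoBrown}), with the presentation of $\wt{G}_X$ computed exactly as in the discussion preceding the statement.

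For (ii) $\Rightarrow$ (i), I argue the contrapositive: assume a presentation of the given form exists, and construct a finite, fixed-point-free, contractible $A_5$-complex. Start with the $A_5$-graph $\Gamma$ obtained from $\XOS(A_5)$ by attaching $k$ free orbits of $1$-cells (the choice of gluing is irrelevant by \cref{PosiblesPi1NoDependenDeGammaOS}). By Brown's theorem applied to $\Gamma$ with $V=\{v_1,v_2,v_3\}$, $T=\{e_{12},e_{23}\}$, and $E$ containing $e_{31}$ plus one representative of each free edge orbit, the group $\wt{G}_\Gamma$ is presented exactly as in (ii) but with the relators $w_0,\ldots,w_k$ omitted, and the inclusion $i\co \pi_1(\Gamma)\hookrightarrow \wt{G}_\Gamma$ identifies $\pi_1(\Gamma)$ with $\ker(\phi)$. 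In particular each $w_i$ is represented as $r_{\tau_i}$ for some closed edge path $\tau_i$. Attach $k+1$ free orbits of $2$-cells along the $A_5$-orbits of $\tau_0,\ldots,\tau_k$ to obtain $X$; then \cref{resultadoBrown} gives $\wt{G}_X \cong A_5$ via the presentation (ii), whence $\pi_1(X)=\ker(\overline{\phi})=1$. An Euler characteristic count yields $\chi(X)=21-(80+60k)+60(k+1)=1$, so $H_2(X)$, being free abelian of rank $\chi(X)-1=0$ in the simply connected $2$-complex $X$, vanishes. Thus $X$ is acyclic and simply connected, hence contractible. Fixed points can only appear in $\XOS(A_5)$ (the added orbits are free), and the vertex stabilizers $A_4,D_6,D_{10}$ there are proper subgroups of $A_5$, so $X^{A_5}=\emptyset$.

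The converse (i) $\Rightarrow$ (ii) is also by contrapositive. Suppose $X$ is a finite, $2$-dimensional, contractible, fixed-point-free $A_5$-complex. Apply \cref{Refinement} to obtain a fixed-point-free $2$-dimensional finite acyclic $A_5$-complex $X'$ built from $\XOS(A_5)$ by attaching $k\geq 0$ free orbits of $1$-cells and $k+1$ free orbits of $2$-cells, together with an epimorphism $\pi_1(X)\twoheadrightarrow \pi_1(X')$. Since $X$ is contractible, $\pi_1(X')=1$. Applying Brown's theorem to $X'$ with the same choices as above yields a presentation of $\wt{G}_{X'}$ of precisely the form (ii); the condition $\pi_1(X')=1$ then forces $\overline{\phi}\co \wt{G}_{X'}\to A_5$ to be an isomorphism, thereby producing the forbidden presentation of $A_5$.

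I expect the proof to be essentially bookkeeping, so the main thing to verify carefully is that Brown's construction applied to a complex of the form ``$\XOS(A_5)$ plus free $1$-cell orbits plus free $2$-cell orbits'' produces exactly the presentation in (ii). Specifically, I must check that the free $1$-cell orbits contribute free stable letters $x_1,\ldots,x_k$ with trivial HNN relations (because their edge stabilizers are trivial), that the sole nontrivial HNN relation comes from $e_{31}$ and reads $x_0 a x_0^{-1}=d$, and that the amalgamation of the vertex groups $H_1,H_2,H_3$ along $H_{12},H_{23}$ simplifies to the presentation $\langle a,b,c,d\mid a^2,b^3,c^2,d^2,(ab)^3,(bc)^2,(cd)^5\rangle$ using the given isomorphisms of $H_i$ with the subgroups of $A_5$. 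Once these identifications are made the equivalence falls out immediately.
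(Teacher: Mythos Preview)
Your proposal is correct and follows exactly the approach the paper takes: the discussion preceding the theorem computes Brown's presentation for the base case, and the paper then simply says ``If we also take into account $k$ additional free orbits of $1$ and $2$ cells and we recall \cref{Refinement}, from Brown's result we obtain'' the theorem---you have spelled out precisely those details (including the Euler-characteristic check for acyclicity in the direction (ii)$\Rightarrow$(i), which the paper leaves implicit). One small notational slip: where you write that $i$ identifies $\pi_1(\Gamma)$ with $\ker(\phi)$, you mean $\ker(\overline{\phi})\subset \wt{G}_\Gamma$.
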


\section{A moduli of representations}
\label{sectionModuli}
In order to prove \cref{theoremWi} we define a moduli of representations of the group 
$$\Gamma_k = \langle a,b,c,d,x_0, \ldots, x_k \mid a^2, b^3, c^2, d^2, (ab)^3, (bc)^2,(cd)^5, x_0 a x_0^{-1}=d \rangle$$ in $\SO(3)$.
Our argument is inspired by James Howie's proof of the Scott--Wiegold conjecture~\cite{HowieScottWiegold}.

Let $\overline{\phi}\colon \Gamma_k \to A_5$ be the homomorphism induced by $\phi\colon F(a,b,c,d,x_0,\ldots, x_k)\to A_5$.

\begin{lemma}\label{lemaXandBAC3}
We have $\ker\left(\overline{\phi}\right) = \llangle x_0,\ldots,x_k, (bac)^3\rrangle$.
\begin{proof}
It is straightforward to verify that $(bac)^3\in \ker(\overline{\phi})$, so it is enough to show the induced epimorphism
$$\overline{\overline{\phi}}\colon \Gamma_k/\llangle x_0,\ldots, x_k, (bac)^3\rrangle\to A_5$$
is in fact an isomorphism.
Eliminating $d$ and the $x_i$ we see
$$\Gamma_k/\llangle x_0,\ldots, x_k, (bac)^3\rrangle = \langle a,b,c \mid a^2,b^3, c^2, (ab)^3, (bc)^2, (ca)^5, (bac)^3 \rangle.$$
The quickest way to finish the proof is by using \textsf{GAP} \cite{GAP} to compute the order of this group:
\VerbatimInput{code/ker_phi.gap}
In \cref{appendixGAP} we give an alternative proof by hand.
\end{proof}
\end{lemma}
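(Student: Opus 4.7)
The plan has two directions, one routine and one harder. For the inclusion $\llangle x_0,\ldots,x_k,(bac)^3\rrangle\subseteq \ker(\overline{\phi})$, each $x_i$ lies in $\ker(\overline{\phi})$ by the very definition of $\phi$, and for $(bac)^3$ it suffices to compute the product $\phi(b)\phi(a)\phi(c)=(3,5,4)\cdot(2,5)(3,4)\cdot(1,2)(3,5)$ in $A_5$ and check that it is a $3$-cycle (so its cube is trivial). This is a finite check that can be done directly.

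For the reverse inclusion $\ker(\overline{\phi})\subseteq \llangle x_0,\ldots,x_k,(bac)^3\rrangle$, I would argue by counting. Set
$$Q \;:=\; \Gamma_k / \llangle x_0,\ldots,x_k,(bac)^3\rrangle.$$
Since $\overline{\phi}$ factors through a surjection $\overline{\overline{\phi}}\colon Q\twoheadrightarrow A_5$, the reverse inclusion is equivalent to the equality $|Q|=60$, i.e. to the bound $|Q|\leq 60$. Imposing $x_i=1$ for all $i$ kills the HNN stable letters and converts the relation $x_0 a x_0^{-1}=d$ into $a=d$, so the generator $d$ can be eliminated. Substituting $d=a$ into $(cd)^5$ yields $(ca)^5$, and we arrive at the considerably simpler presentation
$$Q \;\cong\; \langle\, a,b,c \,\mid\, a^2,\; b^3,\; c^2,\; (ab)^3,\; (bc)^2,\; (ca)^5,\; (bac)^3 \,\rangle.$$

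The main obstacle, and the real content of the lemma, is therefore to verify that this last group has order $60$. Dropping the final relator $(bac)^3$ gives the $(2,3,5)$-triangle-style Coxeter presentation of a much larger group, so the point is to show that the single additional relator $(bac)^3=1$ collapses it down to $A_5$. The natural method is Todd--Coxeter coset enumeration: either trusting the \textsf{GAP} computation that the authors indicate, or enumerating cosets of the subgroup $\langle a,b\rangle\cong A_4$ of index $5$ by hand, which is exactly what a manual proof in the appendix would carry out. Since the surjection onto $A_5$ already produces $60$ cosets, showing the enumeration closes at $60$ finishes the proof.
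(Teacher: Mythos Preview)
Your proposal is correct and follows the same overall structure as the paper: verify $(bac)^3\in\ker(\overline{\phi})$ directly, eliminate $d$ and the $x_i$ to reach the presentation $\langle a,b,c\mid a^2,b^3,c^2,(ab)^3,(bc)^2,(ca)^5,(bac)^3\rangle$, and then establish $|Q|\le 60$ (the paper also defers to \textsf{GAP} for this). One small slip: if you enumerate cosets of $\langle a,b\rangle\cong A_4$, the enumeration should close at $5$ cosets, not $60$; the figure $60$ is the group order, obtained only after multiplying by $|A_4|=12$.

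The one substantive difference is in the by-hand alternative. The appendix does \emph{not} carry out a Todd--Coxeter enumeration. Instead it observes that $x=bc$ and $y=ca$ already satisfy $x^2=y^5=(xy)^3=1$, and then produces an explicit word identity showing $a=xy^2xy^{-2}xy$ in $Q$ using the extra relator $(bac)^3$. Since $a,x,y$ visibly generate $Q$, this exhibits $Q$ as a quotient of $\langle x,y\mid x^2,y^5,(xy)^3\rangle\cong A_5$, giving $|Q|\le 60$ in one stroke. This is shorter and more transparent than a coset enumeration, though your proposed enumeration over $\langle a,b\rangle$ would also work and is a perfectly reasonable route.
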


\begin{proposition}\label{goodRep}
Let $w_0,\ldots,w_k \in \ker(\phi)$. If the group $\Gamma_k$ admits a representation $\rho$ such that 

(i) $\rho(w_i)=1$ for each $i=0,\ldots,k$ and

(ii) there exists $r\in \{x_0,\ldots,x_k, (bac)^3\}$ such that  $\rho(r)\neq 1$

then $\Gamma_k/\llangle w_0,\ldots, w_k \rrangle\xrightarrow{\overline{\overline{\phi}}} A_5$ is not an isomorphism.
\begin{proof}
This follows from \cref{lemaXandBAC3}.
\end{proof} 
\end{proposition}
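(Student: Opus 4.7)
The plan is to argue by contradiction, using \cref{lemaXandBAC3} as the essential input. I will assume $\overline{\overline{\phi}}\colon \Gamma_k/\llangle w_0,\ldots, w_k\rrangle \to A_5$ is an isomorphism and derive a contradiction with hypothesis (ii). Since $w_0,\ldots, w_k \in \ker(\phi) = \ker(\overline{\phi})$, the normal closure $\llangle w_0,\ldots, w_k\rrangle$ is automatically contained in $\ker(\overline{\phi})$, so $\overline{\phi}$ descends to $\overline{\overline{\phi}}$ on the quotient. Assuming $\overline{\overline{\phi}}$ is an isomorphism forces the reverse inclusion, giving the equality $\ker(\overline{\phi}) = \llangle w_0,\ldots, w_k\rrangle$ inside $\Gamma_k$.

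Combining this with \cref{lemaXandBAC3}, which identifies $\ker(\overline{\phi}) = \llangle x_0,\ldots, x_k, (bac)^3\rrangle$, I conclude that each element of $\{x_0,\ldots, x_k, (bac)^3\}$ lies in $\llangle w_0,\ldots, w_k\rrangle$. By hypothesis (i), $\ker(\rho)$ is a normal subgroup of $\Gamma_k$ containing every $w_i$, so $\llangle w_0,\ldots, w_k\rrangle \subseteq \ker(\rho)$. Therefore $\rho$ vanishes on all of $x_0,\ldots, x_k$ and on $(bac)^3$, contradicting hypothesis (ii). There is no real obstacle here: the content of the proposition is entirely packaged in \cref{lemaXandBAC3}, and the argument is just a two-line chase through normal closures.
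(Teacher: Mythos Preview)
Your argument is correct and is exactly the intended unpacking of the paper's one-line proof: the paper simply says ``This follows from \cref{lemaXandBAC3},'' and you have spelled out the routine normal-closure chase that makes this deduction work.
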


\begin{remark}
Note that in some cases (for example when $k=0$ and $w_0=x_0$) a representation of $\Gamma_k$ with image isomorphic to $A_5$ may suffice to conclude that $\Gamma_k/\llangle w_0,\ldots, w_k \rrangle$ is not $A_5$.
This may seem counterintuitive.
\end{remark}

If $\alpha,\beta\in \C$ we consider the matrix
$$\rotMat(\alpha,\beta)= 
\begin{pmatrix}
\alpha & \beta & 0 \\
-\beta & \alpha & 0\\
0 & 0 & 1 \\
\end{pmatrix}$$
which lies in $\SO(3,\C)$ whenever $\alpha^2+\beta^2=1$.
Recall that $\SO(n,\C)$ is the group of matrices $M\in M_n(\C)$ such that $M\cdot M^T=1$ and $\det(M)=1$.
We now introduce our moduli of representations of $\Gamma_k$.
\begin{theorem}\label{teoModuli}
If $\mathbf{z}=(\alpha_1,\beta_1,\alpha_2,\beta_2,\alpha_3,\beta_3,X_1,\ldots, X_k)\in\C^6 \times \SO(3,\C)^k$
satisfies $\alpha_i^2+\beta_i^2=1$ for $i=1,2,3$
then there is a group representation
$$\rho_{\mathbf{z}}\colon \Gamma_k \to \SO(3,\C)$$
defined by the following matrices

\begin{align*}
A &= \begin{pmatrix}
-1 & 0 & 0 \\
0 & \frac{1}{3} & -\frac{2}{3}\sqrt{2}\\
0 & -\frac{2}{3}\sqrt{2} & -\frac{1}{3} \\
\end{pmatrix}\\
B &= \begin{pmatrix}
-\frac{1}{2} &-\frac{\sqrt{3}}{2} & 0  \\
\frac{\sqrt{3}}{2} & -\frac{1}{2} & 0  \\
0 & 0 & 1
\end{pmatrix}\\
C &= \rotMat(\alpha_1,\beta_1)\, S_0\,\rotMat(\alpha_1,\beta_1)^T \\
D &= \rotMat(\alpha_1,\beta_1)\, S_1\, \rotMat(\alpha_2,\beta_2)\, S_2\, \rotMat(\alpha_2,\beta_2)^T\, S_1^T\, \rotMat(\alpha_1,\beta_1)^T\\
X_0 &= \rotMat(\alpha_1,\beta_1)\, S_1\, \rotMat(\alpha_2,\beta_2) \, S_3 \, \rotMat(\alpha_3,\beta_3)\, S_4
\end{align*}
where 

$S_0 = \begin{pmatrix}
-1 & 0 & 0 \\
0 & 1 & 0 \\
0 & 0 & -1 
\end{pmatrix}$,
$S_1=\begin{pmatrix}
-1 & 0 & 0\\
0 & 0 & -1\\
0 & -1 & 0
\end{pmatrix}
$,
$S_2=\begin{pmatrix}
 - \cos(\frac{2\pi}{5})& 0 & -\sin(\frac{2\pi}{5})  \\
 0&-1 &0 \\
 -\sin(\frac{2\pi}{5}) & 0 & \cos(\frac{2\pi}{5})
\end{pmatrix}
$,\medskip

$S_3=\begin{pmatrix}
 0 & \cos(\frac{\pi}{5}) & \sin(\frac{\pi}{5})  \\
 1 & 0 & 0 \\
 0 & \sin(\frac{\pi}{5}) & -\cos(\frac{\pi}{5}) 
\end{pmatrix}
$
and
$S_4=\begin{pmatrix}
0 & -\frac{\sqrt{3}}{3} & -\frac{\sqrt{6}}{3} \\
1 &0 &0 \\
0 & -\frac{\sqrt{6}}{3} & \frac{\sqrt{3}}{3}
\end{pmatrix}$.

\begin{proof}
The proof reduces to the case $k=0$.
We describe the computations needed to finish the proof.
It is straightforward to prove $A^2=1$, $B^3=1$ and $(AB)^3=1$; $C^2=1$ and $D^2=1$ reduce to $S_0^2=1$ and $S_2^2=1$ respectively.
Since $\rotMat(\alpha_1,\beta_1)$ commutes with $B$, to prove $(BC)^2=1$ it is enough to verify $(BS_0)^2=1$.
To prove $(CD)^5=1$ it is enough to verify that $(S_1^TS_0S_1\rotMat(\alpha_2,\beta_2)S_2\rotMat(\alpha_2,\beta_2)^T)^5=1$ and, since
$$S_1^TS_0S_1= \begin{pmatrix} -1 & 0 & 0 \\ 0 & -1 & 0 \\ 0 & 0 & 1\end{pmatrix}$$
commutes with $\rotMat(\alpha_2,\beta_2)$, this reduces to proving $(S_1^TS_0S_1S_2)^5=1$ which follows from
$$S_1^TS_0S_1S_2 = \begin{pmatrix}
\cos(\frac{2\pi}{5}) & 0 & \sin(\frac{2\pi}{5}) \\
0 & 1 & 0 \\
-\sin(\frac{2\pi}{5}) & 0 & \cos(\frac{2\pi}{5})
\end{pmatrix}.$$
Finally, $X_0 A X_0^T = D$ reduces to $S_3 \rotMat(\alpha_3,\beta_3)S_4 A S_4^T \rotMat(\alpha_3,\beta_3)^T S_3^T = S_2$ which follows from
$$S_4AS_4^T = S_3^TS_2S_3 = \begin{pmatrix} -1 & 0 & 0 \\ 0 & -1 & 0 \\ 0 & 0 & 1\end{pmatrix}.$$
The function \verb|check_rep| in \cref{appendixSAGE} gives an alternative proof using SAGE \cite{SAGE}. 
\end{proof}
\end{theorem}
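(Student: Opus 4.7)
The plan is to verify each of the eight defining relations of $\Gamma_k$ one at a time for the stated matrices. Since $x_1,\ldots,x_k$ appear in no relation, I can pick their images $X_1,\ldots,X_k\in\SO(3,\C)$ arbitrarily, which reduces the problem at once to the case $k=0$, where only $A,B,C,D,X_0$ must be checked.

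First I would dispatch the five relations $A^2=B^3=(AB)^3=C^2=D^2=I$. The first three are direct computations on the fixed matrices $A$ and $B$, independent of the parameters. The last two are immediate from the conjugate structure of $C$ and $D$: both are presented in the form $U S U^T$ for an orthogonal matrix $U$ and some $S\in\{S_0,S_2\}$, so they square to $I$ as soon as $S_0^2=S_2^2=I$, which is evident from the given forms.

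The interesting relations are $(BC)^2=I$, $(CD)^5=I$ and $X_0 A X_0^{-1}=D$, in which the parameters enter nontrivially. The guiding principle I would exploit throughout is a commutation lemma: each $\rotMat(\alpha,\beta)$ is a complex ``rotation about the $z$-axis,'' hence commutes with any $3\times 3$ matrix whose upper-left $2\times 2$ block is a scalar multiple of the identity. For $(BC)^2$ this makes $B$ commute with $\rotMat(\alpha_1,\beta_1)$, reducing the relation to the parameter-free identity $(B S_0)^2=I$. For $(CD)^5$ the outer factors $\rotMat(\alpha_1,\beta_1)$ and $\rotMat(\alpha_1,\beta_1)^T$ cancel, and I would then use that $S_1^T S_0 S_1=\mathrm{diag}(-1,-1,1)$ commutes with $\rotMat(\alpha_2,\beta_2)$ to peel out the remaining parameters, leaving the verification $(S_1^T S_0 S_1 S_2)^5=I$; this amounts to the observation that $S_1^T S_0 S_1 S_2$ is a rotation by $2\pi/5$ about the $y$-axis. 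Finally, for $X_0 A X_0^{-1}=D$, stripping the outer factors off both sides reduces the claim to $S_4 A S_4^T = S_3^T S_2 S_3$, both of which evaluate to $\mathrm{diag}(-1,-1,1)$.

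The main obstacle is not in performing these verifications but in the prior choice of the fixed matrices $S_0,\ldots,S_4$: they must be selected so that the three quantities $S_1^T S_0 S_1$, $S_4 A S_4^T$ and $S_3^T S_2 S_3$ all land in the centralizer of the one-parameter family $\{\rotMat(\alpha,\beta):\alpha^2+\beta^2=1\}$, i.e.\ in the subgroup of $z$-axis rotations. If one were constructing the representation from scratch, the natural strategy would be to start from a fixed icosahedral representation $A_5\to\SO(3,\R)$ and then introduce the three parameter pairs by conjugating $C$, $D$, and $X_0$ by $z$-axis rotations in a way compatible with all relations; the role of the $S_i$ is precisely to reorient the icosahedral pieces so that these conjugations are absorbed harmlessly. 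Once the $S_i$ are fixed, the remaining work is routine matrix arithmetic, which can be cross-checked in a computer algebra system.
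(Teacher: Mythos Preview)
Your proposal is correct and matches the paper's proof essentially step for step: the same reduction to $k=0$, the same conjugacy arguments reducing $C^2$, $D^2$, $(BC)^2$, $(CD)^5$, and $X_0AX_0^{-1}=D$ to parameter-free identities by commuting out the $R(\alpha_i,\beta_i)$, and the same terminal verifications $(S_1^TS_0S_1S_2)^5=I$ and $S_4AS_4^T=S_3^TS_2S_3=\mathrm{diag}(-1,-1,1)$. One small correction: your commutation criterion (``upper-left $2\times2$ block is a scalar multiple of the identity'') does not literally apply to $B$, whose upper-left block is a genuine rotation; the reason $B$ commutes with $R(\alpha_1,\beta_1)$ is rather that $B=R(-\tfrac12,-\tfrac{\sqrt3}{2})$ is itself a $z$-axis rotation.
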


\begin{remark}
We also regard $A$, $B$, $C$, $D$, $X_0$ as matrices with coefficients in the polynomial ring $\C[\alpha_1,\beta_1,\alpha_2,\beta_2,\alpha_3,\beta_3]$. 
\end{remark}

\begin{remark}
 This family of representations was obtained in the following way.
 We first obtained a single representation of the group $\Gamma_0$ in $\SO(3,\R)$ by choosing reflections $\sigma_1,\sigma_2,\sigma_3,\sigma_4,\sigma_5$
 with axes forming the appropriate angles so that $a\mapsto \sigma_1\sigma_2$, $b\mapsto \sigma_2\sigma_3$, $c\mapsto \sigma_3\sigma_4$ and $d\mapsto \sigma_4\sigma_5$ defines a representation of the (alternating Coxeter) group generated by $a$, $b$, $c$, and $d$.
 Since $\sigma_1\sigma_2$ and $\sigma_4\sigma_5$ are rotations of the same angle, they are conjugate, so it is possible to extend this to a representation of $\Gamma_0$ by mapping $x_0$ to a rotation $r$.
 Then we \textit{twisted} this representation in the following way to obtain three degrees of freedom.
 If $\theta_1$, $\theta_2$ and $\theta_3$ are rotations commuting with $\sigma_1\sigma_2$, $\sigma_2\sigma_3$, and $\sigma_3\sigma_4$ respectively then 
 $a\mapsto \sigma_1\sigma_2$, $b\mapsto \sigma_2\sigma_3$, $c\mapsto \theta_2 \sigma_3\sigma_4\theta_2^{-1}$, $d\mapsto \theta_2\theta_3\sigma_4\sigma_5\theta_3^ {-1}\theta_2^{-1}$ and $x_0\mapsto \theta_2\theta_3 r\theta_1$ gives a representation of $\Gamma_0$.
 After tidying up these computations we obtain the moduli in \cref{teoModuli}.
\end{remark}

\begin{remark}
Given a family $\{w_i\}_{i\in I}$ of words in $F(a,b,c,d,x_0,\ldots,x_k)$,
the set of points $\mathbf{z} \in \C^6\times \SO(3,\C)^k\subseteq \C^{6+9k}$ such that $\rho_{\mathbf{z}}(w_i)=1$ for all $i\in I$ is an affine algebraic variety that we denote $Z(\{w_i\tq i\in I\})$.
For $k=0$ the variety $Z(w_0)$ can be described with only $6$ equations.
More generally, if we allow $X_1,\ldots,X_k$ to take values in $\O(3,\C)$ the variety $Z(w_0,\ldots, w_k)$ can be described using $6+9k$ equations.
This suggests that it may be possible to use a result such as B\'ezout's theorem to count points.
We could not finish this approach so we took a different one.
\end{remark}

\begin{proposition}\label{exactlyOneBad}
There is exactly one choice of
$$(\alpha_1,\beta_1,\alpha_2,\beta_2,\alpha_3,\beta_3,X_1,\ldots, X_k)\in {\C^6\times \SO(3,\C)^k}$$
with $\alpha_i^2+\beta_i^2=1$ for $i=1,2,3$ 
such that the matrices in \cref{teoModuli} satisfy
$$X_0=X_1=\ldots =X_k=(BAC)^3=1.$$
The unique solution $$\zzbad=(\alpha_1^{\bad},\beta_1^{\bad},\alpha_2^{\bad},\beta_2^{\bad},\alpha_3^{\bad},\beta_3^{\bad},1,\ldots, 1)$$
is real and its exact value is given by
\begin{center}
{\setlength{\tabcolsep}{16pt}
\def\arraystretch{2}
\begin{tabular}{lll}
 $\alpha_1^{\bad}= -\frac{1}{4}\sqrt{3\sqrt{5} + 9}$ & $\alpha_2^{\bad}= -\sqrt{-\frac{2}{15}\sqrt{5} + \frac{1}{3}}$ & $\alpha_3^{\bad}= -\sqrt{-\frac{1}{5}\sqrt{5} + \frac{1}{2}}$ \\
 $\beta_1^{\bad}= \frac{1}{4}\sqrt{-3\sqrt{5} + 7}$ & $\beta_2^{\bad}= \sqrt{\frac{2}{15}\sqrt{5} + \frac{2}{3}}$ & $\beta_3^{\bad} =  \sqrt{\frac{1}{5}\sqrt{5} + \frac{1}{2}}$.
\end{tabular}
}
\end{center}

\begin{proof}
Again this reduces to the case $k=0$.
In \cref{appendixSAGE} we give a proof using SAGE.
We indicate here how to prove this by hand.
We rewrite
$(BAC)^3=1$ as 
\begin{align}\label{eqBAC3}
(BAC)^2 - (BAC)^T &= 0 
\end{align}
and $X_0=1$ as
\begin{align}\label{eqX0}
S_4\rotMat(\alpha_1,\beta_1)S_1\rotMat(\alpha_2,\beta_2) - \rotMat(\alpha_3,\beta_3)^T S_3^T&=0.
\end{align}
We have 
$$BAC=\begin{pmatrix}
-\frac{\sqrt{3}}{3}\alpha_1\beta_1 - \frac{1}{2}(\alpha_1^2 - \beta_1^2) &       \alpha_1\beta_1 - \frac{\sqrt{3}}{6}(\alpha_1^2 - \beta_1^2) & -\frac{\sqrt{6}}{3} \\
 -\frac{1}{3}\alpha_1\beta_1 + \frac{\sqrt{3}}{2}(\alpha_1^2 - \beta_1^2) &  -\sqrt{3}\alpha_1\beta_1 - \frac{1}{6}(\alpha_1^2 - \beta_1^2) & -\frac{\sqrt{2}}{3} \\
 -\frac{4\sqrt{2}}{3}\alpha_1\beta_1 & -\frac{2\sqrt{2}}{3}(\alpha_1   ^2 - \beta_1^2) & \frac{1}{3}
\end{pmatrix}.$$
Then the $(3,3)$ entry of (\ref{eqBAC3}) gives
$\frac{8\sqrt{3}}{9}\alpha_1\beta_1 + \frac{4}{9}(\alpha_1^2 - \beta_1^2) - \frac{2}{9}=0$
and from $\alpha_1^2+\beta_1^2=1$ we obtain
\begin{align}\label{eqBAC333}
\frac{8\sqrt{3}}{9}\alpha_1\beta_1 + \frac{8}{9}\alpha_1^2 - \frac{2}{3} &=0.
\end{align}
To find the entries of (\ref{eqX0}) it is useful to recall that
$\cos(\frac{2\pi}{5})=\frac{1}{4}(\sqrt{5} -1)$, 
$\sin(\frac{2\pi}{5})=\frac{1}{4}\sqrt{2\sqrt{5} + 10}$, 
$\cos(\frac{\pi}{5})=\frac{1}{4}(\sqrt{5} +1)$, and 
$\sin(\frac{\pi}{5})=\frac{1}{4}\sqrt{-2\sqrt{5} + 10}$.
We obtain
$$\begin{pmatrix}
   -\frac{\sqrt{3}}{3}\alpha_2\beta_1- \frac{\sqrt{6}}{3}\beta_2 + \frac{\sqrt{5} + 1}{4}\beta_3  & -\frac{\sqrt{3}}{3}\beta_1\beta_2 + \frac{\sqrt{6}}{3}\alpha_2 - \alpha_3 & \frac{\sqrt{3}}{3}\alpha_1 + \frac{\sqrt{-2\sqrt{5} + 10}}{4}\beta_3 \\
-\alpha_1\alpha_2 - \frac{\sqrt{5} + 1}{4}\alpha_3 & -\alpha_1\beta_2 - \beta_3 & - \beta_1 -\frac{\sqrt{-2\sqrt{5} + 10}}{4}\alpha_3  \\
-\frac{\sqrt{6}}{3}\alpha_2\beta_1 + \frac{\sqrt{3}}{3}\beta_2 - \frac{\sqrt{-2\sqrt{5} + 10}}{4} &-\frac{\sqrt{6}}{3}\beta_1\beta_2 - \frac{\sqrt{3}}{3}\alpha_2 & \frac{\sqrt{6}}{3}\alpha_1 + \frac{\sqrt{5}+1}{4}
  \end{pmatrix}=0.
$$
The $(3,3)$ entry determines the value of $\alpha_1$.
From (\ref{eqBAC333}) we obtain the value of $\beta_1$.
The $(1,3)$ entry allows to obtain the value of $\beta_3$.
Now the $(2,2)$ entry gives the value of $\beta_2$.
The $(3,2)$ entry gives the value of $\alpha_2$ and finally the $(2,3)$ entry determines the value of $\alpha_3$.
Computing the remaining entries we see these values form a solution to $X_0=(BAC)^3=1$ and satisfy $\alpha_i^2+\beta_i^2=1$.
\end{proof}

\end{proposition}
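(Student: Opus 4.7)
The plan is to reduce the statement to the case $k=0$ and then follow the by-hand sketch outlined by the author, exploiting that the equations can be solved one variable at a time in a triangular fashion.

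First I would remark that the matrices $X_1, \ldots, X_k$ appear only through the requirement $X_i = 1$; these equations involve no other unknowns, so they are forced independently and uniquely. It therefore suffices to consider $k=0$ and prove that there is exactly one $(\alpha_1, \beta_1, \alpha_2, \beta_2, \alpha_3, \beta_3) \in \C^6$ with $\alpha_i^2 + \beta_i^2 = 1$ satisfying $(BAC)^3 = I$ and $X_0 = I$, and to check that it is the stated real tuple.

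Next I would rewrite both matrix equations polynomially. Since any $M \in \SO(3,\C)$ satisfies $M^{-1} = M^T$, the relation $(BAC)^3 = I$ becomes $(BAC)^2 = (BAC)^T$, a polynomial identity in the six unknowns. Similarly, using that $\rotMat(\alpha_i,\beta_i)$ and the $S_i$ are orthogonal, $X_0 = I$ rearranges to
\[
S_4\,\rotMat(\alpha_1,\beta_1)\,S_1\,\rotMat(\alpha_2,\beta_2) \;=\; \rotMat(\alpha_3,\beta_3)^T\,S_3^T,
\]
a polynomial matrix identity whose entries involve the trigonometric constants for $\pi/5$ and $2\pi/5$. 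The explicit forms of $BAC$ and of the left/right hand sides above are then computed entry by entry.

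The heart of the argument is a triangular read-off of the system. The $(3,3)$ entry of the rearranged $X_0 = I$ is the linear equation $\tfrac{\sqrt{6}}{3}\alpha_1 + \tfrac{\sqrt{5}+1}{4} = 0$, which determines $\alpha_1$ uniquely; combined with $\alpha_1^2 + \beta_1^2 = 1$ and the $(3,3)$ entry of $(BAC)^2 - (BAC)^T = 0$ (which simplifies to $\tfrac{8\sqrt{3}}{9}\alpha_1\beta_1 + \tfrac{8}{9}\alpha_1^2 - \tfrac{2}{3} = 0$, linear in $\beta_1$), this pins down $\beta_1$ uniquely and with the stated sign. I would then read off, in order, the $(1,3)$ entry to obtain $\beta_3$, the $(2,2)$ entry to obtain $\beta_2$, the $(3,2)$ entry to obtain $\alpha_2$, and the $(2,3)$ entry to obtain $\alpha_3$; at each step the relevant entry is linear in the single new unknown, so uniqueness is automatic and reality is manifest from the coefficients.

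The last step is verification: the remaining entries of both polynomial matrix equations must vanish at this candidate tuple. This is the main obstacle, but a purely computational one — it amounts to arithmetic with nested surds in $\sqrt{2}, \sqrt{3}, \sqrt{5}$, and is precisely the point at which the author delegates to \textsf{SAGE}. No conceptual difficulty remains beyond bookkeeping, and once all entries are checked to vanish, the uniqueness and the explicit real form of $\zzbad$ follow.
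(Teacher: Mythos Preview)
Your proposal is correct and follows essentially the same approach as the paper's own proof: reduce to $k=0$, rewrite $(BAC)^3=1$ and $X_0=1$ as the polynomial matrix identities $(BAC)^2=(BAC)^T$ and $S_4\rotMat(\alpha_1,\beta_1)S_1\rotMat(\alpha_2,\beta_2)=\rotMat(\alpha_3,\beta_3)^T S_3^T$, and then read off the variables one at a time in exactly the order you give (using the $(3,3)$ entries first, then $(1,3)$, $(2,2)$, $(3,2)$, $(2,3)$), with the final verification delegated to a direct computation.
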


\begin{remark}
 We say that $\zzbad$ is \textit{universal} in the following sense: if $\{w_i\}_{i\in I}\subseteq \ker(\phi)$ then $\zzbad\in Z(\{w_i\tq i\in I\})$. 
\end{remark}

The following result is proved in \cref{sectionQuaternions}.

\begin{theorem}\label{atLeastTwo}
Let $w_0,\ldots,w_k \in \ker(\phi)$. Let $N=\ker(\overline{\phi})$. If 
$N = \llangle w_0,\ldots, w_k\rrangle^{\Gamma_k} [N,N]$
then the variety $Z(w_0,\ldots,w_k)$ has at least two different points. 
\end{theorem}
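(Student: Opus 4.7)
The plan is to run a topological degree argument on the real locus of the moduli, following the template of Howie's proof of the Scott--Wiegold conjecture \cite{HowieScottWiegold}. First, I would restrict to real parameters and lift via the double cover $S^3 \to \SO(3,\R)$ to obtain a parameter space $\widetilde M = (S^1)^3 \times (S^3)^k$ of real dimension $3+3k$. Compatible $S^3$-lifts $\widetilde A, \widetilde B, \widetilde C, \widetilde D, \widetilde X_0, \widetilde X_1, \ldots, \widetilde X_k$ of the matrices in \cref{teoModuli} can be chosen so that the defining relations of $\Gamma_k$ (all of even exponent sum in every generator) still hold in $S^3$, yielding a family of lifted representations $\widetilde\rho_{\widetilde{\mathbf{z}}} \co \Gamma_k \to S^3$.

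The next step is to consider the evaluation map
$$F \co \widetilde M \to (S^3)^{k+1}, \qquad \widetilde{\mathbf{z}} \mapsto \bigl(\widetilde\rho_{\widetilde{\mathbf{z}}}(w_0), \ldots, \widetilde\rho_{\widetilde{\mathbf{z}}}(w_k)\bigr).$$
Both source and target are closed orientable manifolds of the same real dimension $3+3k$, so $F$ carries a well-defined Brouwer degree. Since each $w_i \in \ker\phi$, the image $\widetilde\rho_{\widetilde{\zzbad}}(w_i)$ lies in the center $\{\pm 1\}\subset S^3$, so over each of the $2^{3+k}$ lifts $\widetilde{\zzbad}_\epsilon$ of $\zzbad$ to $\widetilde M$ the map $F$ takes a value in $\{\pm 1\}^{k+1}$. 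The goal becomes to produce a preimage of $(1,\ldots,1)$ in $\widetilde M$ whose projection to the original moduli is distinct from $\zzbad$.

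The hypothesis $N=\llangle w_0,\ldots,w_k\rrangle^{\Gamma_k}[N,N]$ is exactly the acyclicity condition ($H_1(X)=0$) for the 2-complex $X$ built by attaching 2-cells along the $w_i$, via \cref{prop3by3}. It should translate, after the quaternionic lift, into a nondegeneracy statement at $\widetilde{\zzbad}$ and force $|\deg F|$ to exceed the total signed count of trivial preimages. Indeed, each lift $\widetilde{\zzbad}_\epsilon$ at which $F(\widetilde{\zzbad}_\epsilon)=(1,\ldots,1)$ contributes $\pm 1$ to the degree, and the sum of these contributions is constrained by the character-theoretic data of $A_5$. Under the hypothesis, this sum must be strictly smaller in absolute value than $\deg F$, so an additional preimage in $\widetilde M$ is forced, giving a real point of $Z(w_0,\ldots,w_k)$ distinct from $\zzbad$.

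The hardest step will be the explicit degree computation: one must compute $\deg F$ in closed form from the algebraic data of the lifted matrices and the words $w_i$, and verify that under the hypothesis the trivial preimages cannot absorb its full absolute value. This is where Howie's technique of exploiting the multiplicative structure of $\H$ to reduce degree computations on products of $S^1$ and $S^3$ to tractable algebraic identities will be essential, and where the specific structure of $A_5$ (through the binary icosahedral group $A_5^*\cong \mathrm{SL}(2,5)$, obtained as the preimage of the image of $\widetilde\rho_{\widetilde{\zzbad}}$ under the quaternionic double cover) enters decisively.
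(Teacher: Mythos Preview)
Your high-level strategy---a Howie-style degree argument via quaternionic lifts---matches the paper's, but the execution you sketch diverges at the decisive point and leaves a genuine gap.

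The paper does not work over the closed manifold $(S^1)^3\times(S^3)^k$ and never tries to compute $\deg F$ in closed form. Instead it parametrizes over the manifold with boundary $[0,2\pi]^3\times(\D^3)^k$, projects each $\wt W_i$ to its imaginary part to get $\Psi\wt{\mathbf{W}}$, and runs a \emph{parity} argument. After first normalizing (via Nielsen moves on $w_0,\ldots,w_k$) so that the exponent of $x_i$ in $w_j$ is $\delta_{i,j}$, the map $\Psi\wt{\mathbf{W}}$ satisfies an antipodal-type condition on opposite boundary faces (\cref{lemmaDegreeEvenGeneralized}, \cref{degreeWIsEven}), forcing the degree of the restriction to the boundary to be even. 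The hypothesis $N=\llangle w_i\rrangle^{\Gamma_k}[N,N]$ is used precisely as you anticipated---to make the differential of $\Psi\wt{\mathbf{W}}$ at $\ttbad$ invertible (\cref{lemmaDifferentialW})---so the local degree there is $\pm 1$, odd. If $\ttbad$ were the only zero, the boundary degree would equal this local degree, contradicting parity. No numerical degree is ever computed and no character theory enters.

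Your proposed endgame---comparing an explicitly computed $\deg F$ against a signed count of ``trivial'' preimages constrained by character-theoretic data of $A_5$ or by the binary icosahedral group---is the gap. You do not say what $\deg F$ is, how to obtain it, or why the signed sum over the $2^{3+k}$ lifts of $\zzbad$ cannot account for all of it; on a closed source there is no evident parity mechanism to substitute for this comparison. (A side point: the claim that the defining relators of $\Gamma_k$ have even exponent sum in every generator is false---witness $b^3$---so you do not get a lifted representation $\Gamma_k\to S^3$; the paper only uses that each word $w$ has a well-defined lift $\wt W$ up to a global sign.) The two missing ideas are the $\delta_{i,j}$ normalization and the even-boundary-degree lemma; together they replace your undetermined degree computation with a clean odd-versus-even contradiction.
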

Note that, by \cref{prop3by3}, the condition $N = \llangle w_0,\ldots, w_k\rrangle^{\Gamma_k} [N,N]$ is equivalent to the acyclicity of the corresponding $2$-complex. This is also the same as saying that $w_0,\ldots,w_k$ generate the $A_5$-module $N/[N,N]$ (i.e. the \textit{relation module} of $1\to N\to \Gamma_k\xrightarrow{\overline{\phi}} A_5\to 1$).

\section{Quaternions}
\label{sectionQuaternions}

To prove \cref{atLeastTwo} we study the real part of the moduli, working with quaternions instead of orthogonal matrices.
This is useful because representing a rotation as a quaternion allows to find the axis easily.

Recall that $S^3=\{ q\in \H \tq |q|=1\}$ acts on $S^2=\{ b\ii+c\jj+d\kk \tq b^2+c^2+d^2=1\}$ by conjugation.
Recall that any element of $S^3$ can be written as $\cos(\theta/2)+\sin(\theta/2) q$ with $\theta\in[0,2\pi]$ and $q=b\ii+c\jj+d\kk\in S^2$.
There is a homomorphism $p\colon S^3\to \SO(3,\R)$ with $\ker(p)=\{1,-1\}$ and which sends
$\cos(\frac{\theta}{2})+\sin(\frac{\theta}{2}) (b\ii+c\jj+d\kk)$
to the rotation matrix with angle $\theta$ and axis $(b,c,d)$.
Note that $\wt{\rotMat}(t)=\cos(\frac{t}{2})+\kk \sin(\frac{t}{2})$ is a lift of $\rotMat(\cos(t),\sin(t))$ by $p$.

Let $\psi\colon \H\to \R^3$ be given by $a+b\ii+c\jj+d\kk\mapsto (b,c,d)$.
Recall that if $q\in S^3$ and $v$ is a pure quaternion we have $\psi(q v q^{-1}) = p(q)\cdot \psi(v)$.
Let $\D^3\subset \R^3$ be the unit disk.
Let $\varphi\colon \D^3 \to \H$ be given by $(b,c,d)\mapsto \sqrt{1-b^2-c^2-d^2}+ b\ii+c\jj+d\kk$.
We denote the coordinates of $[0,2\pi]^3\times (\D^3)^k$ by $t_1,t_2,t_3,\ldots, t_{3(k+1)}$.

\begin{definition}\label{defRepQuaternions}
Let $\wt{A}$, $\wt{B}$, $\wt{S}_0$, $\wt{S}_1$, $\wt{S}_2$, $\wt{S}_3$, $\wt{S}_4$, 
be preimages by $p$ of the matrices $A$, $B$, $S_0$, $S_1$, $S_2$, $S_3$, $S_4$ which appear in the statement of \cref{teoModuli}.
We also define functions $\wt{C},\wt{D},\wt{X}_0\colon [0,2\pi]^3 \times (\D^3)^k \to \H$ by
\begin{align*}
\wt{C}(\tt) &= \wt{\rotMat}(t_1) \,\wt{S}_0\,  \wt{\rotMat}(t_1)^{-1},\\
\wt{D}(\tt) &= \wt{\rotMat}(t_1)\, \wt{S}_1\, \wt{\rotMat}(t_2)\, \wt{S}_2\, \wt{\rotMat}(t_2)^{-1}\, \wt{S}_1^{-1}\, \wt{\rotMat}(t_1)^{-1},\\
\wt{X}_0(\tt) &= \wt{\rotMat}(t_1)\, \wt{S}_1\, \wt{\rotMat}(t_2) \, \wt{S}_3 \, \wt{\rotMat}(t_3)\, \wt{S}_4.
\end{align*}
For $i=1,\ldots, k$ we define $\wt{X}_i( \tt ) = \varphi( t_{3i+1},t_{3i+2}, t_{3i+3})$.
Let $t_1^{\bad},t_2^{\bad},t_3^{\bad} \in [0,2\pi]^3$ be the unique numbers such that $\cos(t_i^{\bad})=\alpha_i^{\bad}$ and $\sin(t_i^{\bad})=\beta_i^{\bad}$.
Let $\ttbad = (t_1^{\bad},t_2^{\bad},t_3^{\bad},0,\ldots, 0)\in [0,2\pi]^3\times (\D^3)^k$.
Note that we can arrange the signs of these preimages so that $\left(\wt{B}\wt{A}\wt{C}\right)^3(\ttbad)=1$ and $\wt{X}_0(\ttbad)=1$.
\end{definition}

If $w\in F(a,b,c,d,x_0,\ldots, x_k)$ there is an induced map $\wt{W}\colon [0,2\pi]^3\times (\D^3)^k \to S^3$.
Note that any two words $w,w'$ which are equal in $\Gamma_k$ induce maps $\wt{W}, \wt{W}'$ which are equal or differ on a sign.
If $w_0,\ldots, w_k\in\ker(\phi)$ we can consider
$$\wt{\mathbf{W}}=(\wt{W_0},\ldots,\wt{W_k})\colon [0,2\pi]^3\times (\D^3)^k \to (S^3)^{k+1}$$
which can be composed with
$$\Psi = (\psi,\ldots, \psi) \colon \H^{k+1}\to \R^{3(k+1)}$$
to obtain a map 
$$\Psi\, \wt{\mathbf{W}} \colon [0,2\pi]^3\times (\D^3)^k \to (\D^3)^{k+1}.$$

The plan is to assume $\ttbad$ is the only zero in order to do a degree argument.
We will get a contradiction by computing the degree in two different ways.

\begin{lemma}\label{lemmaDegreeEvenGeneralized}
Let $I=[-1,1]$ and let $\D^3\subset \R^3$ be the unit disk.
Let $$\mathbf{F}=(f_0,\ldots,f_k) \colon I^3\times (\D^3)^k \to  (\D^3)^{k+1}$$ be a continuous map 
which is nonzero on the boundary of $I^3\times (\D^3)^k$ and satisfies the following parity condition:
\begin{itemize}
 \item For $t_1,t_2,t_3\in I$, $x_1,\ldots, x_k \in \D^3$ we have
\begin{align*}
(f_0,f_1,\ldots,f_k)((-1,t_2,t_3),x_1,\ldots,x_k)&= (-f_0,f_1,\ldots,f_k)((1,t_2,t_3),x_1,\ldots, x_k)\\
(f_0,f_1,\ldots,f_k)((t_1,-1,t_3),x_1,\ldots,x_k)&= (-f_0,f_1,\ldots,f_k)((t_1,1,t_3),x_1,\ldots, x_k)\\
(f_0,f_1,\ldots,f_k)((t_1,t_2,-1),x_1,\ldots,x_k)&= (-f_0,f_1,\ldots,f_k)((t_1,t_2,1),x_1,\ldots, x_k).
\end{align*}
\item For each $1\leq i\leq k$ and for every $(x_0,\ldots, x_k)\in I^3\times (\D^3)^k$ with $x_i\in \partial \D^3$ we have
$$(f_0,f_1,\ldots,f_k)(x_0 ,\ldots,x_{i-1},-x_i,x_{i+1},\ldots,x_k)= (f_0 ,\ldots,f_{i-1},-f_i,f_{i+1},\ldots,f_k)(x_0,\ldots, x_k).$$
\end{itemize}
Then the restriction
$$\mathbf{F}\colon {\partial(I^3\times (\D^3)^k )}\to (\D^3)^{k+1} -\{0\}$$
has even degree.

\begin{proof}
We fix cellular structures.
For $I$ we take the structure with two $0$-cells and one $1$-cell.
For $\D^3$ we take the cell structure with two $0$-cells, two $1$-cells, two $2$-cells and one $3$-cell (the antipodal map interchanges the $i$-cells in each pair for $0\leq i\leq 2$).
We take the product cellular structure for $I^3$, $I^3\times (\D^3)^k$ and $(\D^3)^{k+1}$.
Let $S=\partial(I^3\times (\D^3)^k)$.
Note that the $(3k+2)$-cells of $S$ can be divided into $3+k$ pairs of \textit{opposite} cells in a natural way.
Note that it is easy to define a cellular map $h_0\colon I^3\to \partial\, \D^3$ which satisfies
\begin{align*}
h_0(-1,t_2,t_3)&=-h_0(1,t_2,t_3)\\
h_0(t_1,-1,t_3)&=-h_0(t_1,1,t_3)\\
h_0(t_1,t_2,-1)&=-h_0(t_1,t_2,1).
\end{align*}
Let $h_i\colon \D^3\to \D^3$ be the identity  for $1\leq i \leq k$.
Now we can define a homotopy between $\mathbf{F}|_S$ and a map $\mathbf{G}\colon S\to \partial (\D^3)^{k+1}$
that satisfies the parity condition and coincides with 
$\mathbf{H}=(h_0,\ldots,h_k)$ on the $(3k+1)$-skeleton of $S$.
This is done skeleton by skeleton using that $\partial(\D^3)^{k+1}$ is $(3k+1)$-connected.
For each pair of opposite $(3k+2)$-cells we can extend the homotopy so that the parity condition is also satisfied by $\mathbf{G}$.
Clearly the degrees of $\mathbf{F}|_S$ and $\mathbf{G}$ are equal.
Now note that if $e,e'$ is a pair of opposite $(3k+2)$-cells then $\mathbf{G}_*(e),\mathbf{H}_*(e)\in C_{3k+2}(\partial(\D^3)^{k+1})$ differ on an element of $H_{3k+2}(\partial (\D^3)^{k+1})$.
Moreover, by the parity condition, $\mathbf{G}_*(e')$ and $\mathbf{H}_*(e')$ differ on the same element.
Thus the degree of $\mathbf{H}|_{S}$ and the degree of $\mathbf{G}$ are equal modulo $2$.
To conclude, note that $\deg(\mathbf{H}|_{S})=0$ since $\mathbf{H}\colon I^3\times (\D^3)^k\to \partial (\D^3)^{k+1} $ is an extension of $\mathbf{H}|_{S}$ to a contractible space.
\end{proof}
\end{lemma}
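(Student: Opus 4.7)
First, the dimension count: $S := \partial(I^3 \times (\D^3)^k)$ is a topological sphere of dimension $3k+2$, and the inclusion $\partial(\D^3)^{k+1} \hookrightarrow (\D^3)^{k+1}\setminus\{0\}$ is a homotopy equivalence, with $\partial(\D^3)^{k+1}$ a $(3k+2)$-sphere. After post-composing with the deformation retraction, $\mathbf{F}|_S$ represents an element of $\pi_{3k+2}(S^{3k+2}) \cong \Z$, and my goal is to show this class is even.

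The plan is to compare $\mathbf{F}|_S$ against a reference map of degree zero and show that the difference has even total degree. I would fix cell structures: give $I$ the obvious structure with $0$-cells $\pm 1$ and one $1$-cell; give $\D^3$ an antipodally symmetric structure with two cells in each dimension $0,1,2$ and a single $3$-cell; and take the product structures on $I^3\times(\D^3)^k$ and $(\D^3)^{k+1}$. With these choices, the $(3k+2)$-cells of $S$ partition naturally into $3+k$ pairs of \emph{opposite} cells, one for each sign-flip on a coordinate of $I^3$ and one for each antipodal pairing on a $\D^3$ factor.

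Define the reference map $\mathbf{H} = (h_0,h_1,\ldots,h_k)\colon I^3\times(\D^3)^k \to \partial(\D^3)^{k+1}$ by choosing any cellular $h_0\colon I^3 \to \partial\D^3$ that is odd in each of the three coordinates (so it automatically satisfies the three $I^3$-parity conditions), and setting $h_i = \mathrm{id}_{\D^3}$ for $i\geq 1$. Because $\mathbf{H}$ is defined on the contractible cube $I^3\times(\D^3)^k$, the restriction $\mathbf{H}|_S$ is null-homotopic in $\partial(\D^3)^{k+1}$, so it has degree zero. Then, using that $\partial(\D^3)^{k+1} \simeq S^{3k+2}$ is $(3k+1)$-connected, I would inductively deform $\mathbf{F}|_S$ (which lands in the homotopy equivalent space $(\D^3)^{k+1}\setminus\{0\}$) to a cellular map $\mathbf{G}\colon S\to\partial(\D^3)^{k+1}$ that (a) agrees with $\mathbf{H}$ on the $(3k+1)$-skeleton, and (b) still satisfies the parity condition. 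The parity is preserved by doing the cellular approximation on one cell in each pair and transporting the homotopy to the opposite cell via the symmetry, using that the target involutions (negation of a single factor) identify the relevant mapping spaces.

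Once $\mathbf{G}$ is constructed, for every top cell $e$ of $S$ the chain-level difference $\mathbf{G}_\ast(e) - \mathbf{H}_\ast(e)$ lies in $H_{3k+2}(\partial(\D^3)^{k+1})\cong\Z$, and the parity condition forces the contribution at the opposite cell $e'$ to equal that at $e$ modulo $2$. Summing over all $(3k+2)$-cells yields $\deg(\mathbf{G}) \equiv \deg(\mathbf{H}|_S) \equiv 0 \pmod 2$, and since $\deg(\mathbf{F}|_S) = \deg(\mathbf{G})$, the conclusion follows. The main obstacle I foresee is carrying out the equivariant cellular approximation cleanly, i.e.\ verifying that the successive skeletal extensions of the homotopy can be chosen compatibly with the pairing of cells; the mod-$2$ endgame is robust to any orientation-reversal issues arising from the involutions on the codomain, which is why the lemma yields evenness rather than exact computation of the degree.
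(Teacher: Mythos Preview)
Your proposal is correct and follows essentially the same approach as the paper: the same symmetric cell structures, the same reference map $\mathbf{H}=(h_0,\mathrm{id},\ldots,\mathrm{id})$ with $h_0$ odd in each coordinate, the same parity-preserving deformation of $\mathbf{F}|_S$ to a map $\mathbf{G}$ agreeing with $\mathbf{H}$ on the $(3k+1)$-skeleton via the $(3k+1)$-connectivity of $\partial(\D^3)^{k+1}$, and the same chain-level pairing argument to conclude $\deg(\mathbf{G})\equiv\deg(\mathbf{H}|_S)=0\pmod 2$. The point you flag as the main obstacle---extending the homotopy compatibly on paired cells---is exactly the step the paper handles by extending on one cell of each pair and transporting via the symmetry.
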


\begin{corollary}\label{degreeWIsEven}
Let $w_0,\ldots,w_k\in F(a,b,c,d,x_0,\ldots,x_k)$ be words and assume the total exponent of $x_i$ in $w_j$ is $\delta_{i,j}$.
If $\Psi\wt{\mathbf{W}}$ is nonzero on the boundary of $[0,2\pi]^3\times (\D^3)^{k}$, then the degree of the restriction
$\Psi \wt{\mathbf{W}}\colon \partial \left( [0,2\pi]^3\times (\D^3)^{k}\right)\to (\D^3)^{k+1}-\{0\}$
is even.
\begin{proof}
Since the total exponent of $x_i$ in $w_j$ is $\delta_{i,j}$, by looking at \cref{defRepQuaternions} we see the parity condition of \cref{lemmaDegreeEvenGeneralized} is satisfied.
 \end{proof}
\end{corollary}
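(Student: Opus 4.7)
The plan is to apply \cref{lemmaDegreeEvenGeneralized}, after the affine reparameterization of $[0,2\pi]$ onto $I=[-1,1]$ that sends $0$ and $2\pi$ to $-1$ and $1$. What needs to be checked are the two parity conditions of the lemma; both reduce to tracking how $\wt{\mathbf{W}}$ transforms under the boundary identifications, using the key identity $\wt{\rotMat}(t+2\pi)=-\wt{\rotMat}(t)$ (which reflects the double cover $S^3\to\SO(3,\R)$) together with the odd symmetry $\Psi(-q)=-\Psi(q)$.

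For the first parity condition, I examine \cref{defRepQuaternions}: the quaternions $\wt{A},\wt{B}$ are constant, and each $\wt{\rotMat}(t_i)$ appears in $\wt{C}(\tt)$ and $\wt{D}(\tt)$ only as part of a conjugation $\wt{\rotMat}(t_i)\,(\cdot)\,\wt{\rotMat}(t_i)^{-1}$, so the sign flips cancel in these factors. The factor $\wt{X}_0(\tt)=\wt{\rotMat}(t_1)\wt{S}_1\wt{\rotMat}(t_2)\wt{S}_3\wt{\rotMat}(t_3)\wt{S}_4$, by contrast, picks up an overall sign when any of $t_1,t_2,t_3$ is shifted by $2\pi$; and the remaining $\wt{X}_j$ for $j\geq 1$ are independent of $\tt$. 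Hence for any word $w_m$, the quaternion $\wt{W}_m(\tt)$ is multiplied by $(-1)^{e}$, where $e$ is the total exponent of $x_0$ in $w_m$; by the hypothesis $e=\delta_{0,m}$, so $\wt{W}_0$ flips sign while $\wt{W}_m$ is unchanged for $m\geq 1$. Postcomposing with the odd map $\Psi$ gives exactly the first parity condition on $\Psi\wt{\mathbf{W}}$.

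For the second parity condition, I fix $i\geq 1$ and take $x_i\in\partial\D^3$, so that $\sqrt{1-|x_i|^2}=0$ and $\wt{X}_i(\tt)=\varphi(x_i)$ is a pure unit quaternion with $\varphi(-x_i)=-\varphi(x_i)$. No other factor in \cref{defRepQuaternions} depends on $x_i$, hence negating $x_i$ multiplies $\wt{W}_m(\tt)$ by $(-1)^{e}$ where $e$ is the total exponent of $x_i$ in $w_m$; again by hypothesis $e=\delta_{i,m}$, so only $\wt{W}_i$ flips sign. After composing with $\Psi$ this matches the second parity condition. Combined with the standing nonvanishing hypothesis on the boundary, \cref{lemmaDegreeEvenGeneralized} then yields that the degree of the restriction is even. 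The main thing to be careful about is the exponent bookkeeping inside arbitrary words $w_m$, but this is handled uniformly by the assumption $\delta_{i,j}$ together with the fact that $\wt{A}$ and $\wt{B}$ are constant and that the only $\tt$-dependent or $x_i$-dependent sign flips are localized in $\wt{X}_0$ and $\wt{X}_i$; no genuine obstacle arises.
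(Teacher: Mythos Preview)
Your proof is correct and follows the same approach as the paper: verifying the parity hypotheses of \cref{lemmaDegreeEvenGeneralized} via the formulas in \cref{defRepQuaternions}, using $\wt{\rotMat}(t+2\pi)=-\wt{\rotMat}(t)$ and $\varphi(-x)=-\varphi(x)$ on $\partial\D^3$. The paper compresses all of this into a single sentence, whereas you have written out the verification in detail; there is no substantive difference.
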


Recall that the degree can be computed in the following way
\begin{lemma}\label{lemmaDegreeAndDifferential}
 Let $f\colon \R^n\to \R^n$ be smooth and assume $f(0)=0$. If $\det(Df_0)\neq 0$ then $0$ is an isolated zero and the degree of $f$ around $0$ is given by $\deg(f,0)=\sg(\det(Df_0))$.
\end{lemma}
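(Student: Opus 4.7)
The plan is to reduce the degree computation near an isolated zero to the degree of the linear map $Df_0$, which can then be identified with the sign of its determinant by a standard connectedness argument on $\GL_n(\R)$.

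First I would verify that $0$ is an isolated zero. Since $\det(Df_0)\neq 0$, the inverse function theorem gives an open neighbourhood $U$ of $0$ on which $f|_U$ is a diffeomorphism onto its image; in particular $f^{-1}(0)\cap U=\{0\}$. This also lets me treat $\deg(f,0)$ unambiguously as the degree of $f/|f|\colon S^{n-1}_\varepsilon\to S^{n-1}$ for any sufficiently small sphere $S^{n-1}_\varepsilon=\{|x|=\varepsilon\}$.

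Next I would compare $f$ to its linearization via a straight-line homotopy. Writing $f(x)=Df_0\cdot x + R(x)$ with $R(x)=o(|x|)$, and setting $m=\min_{|u|=1}|Df_0\cdot u|>0$, I pick $\varepsilon>0$ so small that $|R(x)|<\tfrac{m}{2}|x|$ on $|x|\leq\varepsilon$. Then the homotopy $H(x,s)=Df_0\cdot x + s\,R(x)$ satisfies $|H(x,s)|\geq \tfrac{m}{2}|x|>0$ for all $x\in S^{n-1}_\varepsilon$ and $s\in[0,1]$, so $H$ is a homotopy through maps $S^{n-1}_\varepsilon\to \R^n\setminus\{0\}$ from $Df_0$ to $f$. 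Homotopy invariance of the degree then gives $\deg(f,0)=\deg(Df_0,0)$.

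Finally I would evaluate $\deg(L,0)$ for a general linear isomorphism $L$. The group $\GL_n^+(\R)$ is path connected, so any $L$ with $\det(L)>0$ is joined by a path of invertible matrices to the identity, producing a homotopy to $\id_{\R^n}$ through nonvanishing maps on $S^{n-1}$, whence $\deg(L,0)=\deg(\id,0)=1$. For $\det(L)<0$, I connect $L$ within $\GL_n^-(\R)$ to the reflection $\mathrm{diag}(-1,1,\ldots,1)$, which has degree $-1$ on $S^{n-1}$. Combining these, $\deg(Df_0,0)=\sg(\det(Df_0))$, which together with the previous paragraph yields the claim. The only mildly delicate point is choosing $\varepsilon$ correctly so that the linear homotopy stays away from $0$; once the uniform lower bound $m$ is in place this is immediate.
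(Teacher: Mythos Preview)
Your argument is correct and standard: the inverse function theorem gives isolation, the straight-line homotopy to the linearization is valid once you control the remainder by $\tfrac{m}{2}|x|$, and the connectedness of the two components of $\GL_n(\R)$ finishes the computation.

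The paper, however, does not prove this lemma at all. It is introduced with the phrase ``Recall that the degree can be computed in the following way'' and then stated without proof, being treated as a well-known fact from differential topology. So there is nothing to compare: you have supplied a clean proof of a result the paper simply quotes.
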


We need some basic differentiation properties for quaternion valued analytic functions analogous to the usual ones (see \cref{appendixQuaternions}).
Note that $\wt{\rotMat}(t)= \cos(\frac{t}{2})+\kk \sin(\frac{t}{2}) = 1 + \frac{t}{2} \kk + O(t^2)$.

\begin{lemma}\label{lemmaMxInvertible}
  Let $\wt{\mathbf{X}} = (\wt{X}_0,\ldots, \wt{X}_k)$.
  Then $D\left(\Psi \wt{\mathbf{X}}\right)_{\ttbad}$ is invertible.
\begin{proof}
Again this reduces to the case $k=0$ by noting that 
$$D\left(\Psi \wt{\mathbf{X}}\right)_{\ttbad} = 
\begin{pmatrix}
M & 0 \\
0 & I
\end{pmatrix}                                                 
$$
where $M$ is the $3\times 3$ matrix we obtain in the $k=0$ case.
We now prove $M$ is invertible.
Recall that $\wt{X}_0(\ttbad)=1$. Then
  \begin{align*}
\wt{X}_0(\ttbad+\tt) =&\, \wt{\rotMat}(t^{\bad}_1) \, \wt{\rotMat}(t_1)\, \wt{S}_1\,  \wt{\rotMat}(t_2^{\bad})\, \wt{\rotMat}(t_2)\, \wt{S}_3\, \wt{\rotMat}(t_3^{\bad})\, \wt{\rotMat}(t_3)\, \wt{S}_4 \\
 =&\, \wt{\rotMat}(t^{\bad}_1)  \left(1+\frac{t_1}{2}\kk\right)  \wt{S}_1\, \wt{\rotMat}(t_2^{\bad}) \left(1+\frac{t_2}{2}\kk\right)  \wt{S}_3\, \wt{\rotMat}(t_3^{\bad}) \left(1+\frac{t_3}{2}\kk\right)  \wt{S}_4\, +\,  O(\tt^2) \\
 =&\, 1\,  + \,   \frac{1}{2}\, \wt{\rotMat}(t^{\bad}_1)\,  \kk \, \wt{\rotMat}(t^{\bad}_1)^{-1} \, t_1 \,  + \, \frac{1}{2}\, \left(\wt{\rotMat}(t^{\bad}_1)\, \wt{S}_1\, \wt{\rotMat}(t_2^{\bad})\right)\,  \kk\,  \left(\wt{\rotMat}(t^{\bad}_1)\, \wt{S}_1\, \wt{\rotMat}(t_2^{\bad})\right)^{-1}  t_2\, \\
  &\, + \, \frac{1}{2}\,  \wt{S}_4^{-1} \, \kk\,  \wt{S}_4 \, t_3 \, +\,  O(\tt^2)
  \end{align*}
  Now recalling that $q\, \kk\, q^{-1} = (\ii,\jj,\kk)\cdot  p(q)\cdot (0,0,1)$ for any $q\in S^3$ 
  we see that the columns of $M$ are given by
  \begin{align*}
  \frac{1}{2}\, \rotMat(\alpha_1^{\bad},\beta_1^{\bad}) \cdot (0,0,1) &=  \left(0,0,\frac{1}{2}\right)\\
  \frac{1}{2}\, \rotMat(\alpha_1^{\bad},\beta_1^{\bad})\, S_1\, \rotMat(\alpha_2^{\bad},\beta_2^{\bad}) \cdot (0,0,1) &=\left(-\frac{1}{2}\beta_1^{\bad},-\frac{1}{2} \alpha_1^{\bad},0\right)\\
  \frac{1}{2}\, S_4^{-1} \cdot (0,0,1)&= \left(0,-\frac{1}{6}\sqrt{6},\frac{1}{6}\sqrt{3}\right).
  \end{align*}
  Thus
  $$M=
  \begin{pmatrix}
      0 & -\frac{1}{2}\beta_1^{\bad} & 0 \\
      0 & -\frac{1}{2}\alpha_1^{\bad} & -\frac{1}{6}\sqrt{6} \\
      \frac{1}{2} & 0 & \frac{1}{6}\sqrt{3}
  \end{pmatrix}$$
and therefore $\det(M)= \frac{1}{24}\sqrt{6}\beta_1^{\bad}\neq 0$.
\end{proof}
\end{lemma}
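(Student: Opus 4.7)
My plan is to reduce to the case $k = 0$ and then compute the $3 \times 3$ matrix directly. The reduction is straightforward: for $i \geq 1$, the function $\wt{X}_i$ depends only on the coordinates $(t_{3i+1}, t_{3i+2}, t_{3i+3})$ while $\wt{X}_0$ depends only on $(t_1, t_2, t_3)$, so $D(\Psi \wt{\mathbf{X}})_{\ttbad}$ is block diagonal. Since $\wt{X}_i(\ttbad) = \varphi(0) = 1$ and $\psi \circ \varphi$ is the identity on $\D^3$, each of the last $k$ blocks is the $3\times 3$ identity, so it suffices to show the first block $M := D(\psi \wt{X}_0)_{\ttbad}$ is invertible.

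To expand $\wt{X}_0(\ttbad + \tt)$ to first order I would exploit two facts: $\wt{\rotMat}$ is a one-parameter subgroup, so $\wt{\rotMat}(t^{\bad}_i + t_i) = \wt{\rotMat}(t^{\bad}_i)\wt{\rotMat}(t_i)$, and $\wt{\rotMat}(t) = 1 + \tfrac{t}{2}\kk + O(t^2)$. Plugging in, expanding the product, using $\wt{X}_0(\ttbad) = 1$ and moving the base-point quaternions to the left of each $\kk$, I obtain
\[
\wt{X}_0(\ttbad + \tt) \;=\; 1 + \tfrac{1}{2}\sum_{i=1}^{3} t_i \, q_i \, \kk \, q_i^{-1} + O(\tt^2),
\]
where $q_1 = \wt{\rotMat}(t^{\bad}_1)$, $q_2 = \wt{\rotMat}(t^{\bad}_1)\wt{S}_1\wt{\rotMat}(t^{\bad}_2)$ and $q_3 = \wt{\rotMat}(t^{\bad}_1)\wt{S}_1\wt{\rotMat}(t^{\bad}_2)\wt{S}_3\wt{\rotMat}(t^{\bad}_3)$. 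The relation $\wt{X}_0(\ttbad) = 1$ forces $q_3 = \wt{S}_4^{-1}$, which is the main reason why the last column admits such a clean form.

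Next, the identity $\psi(q \kk q^{-1}) = p(q) \cdot (0,0,1)$ converts each linear term into the rotation axis of $p(q_i)$ applied to $(0,0,1)$, so the three columns of $M$ are $\tfrac{1}{2}\, p(q_i)\cdot(0,0,1)$. Under $p$, the $\wt{\rotMat}(t^{\bad}_i)$ become $\rotMat(\alpha_i^{\bad},\beta_i^{\bad})$ and $\wt{S}_j$ becomes $S_j$, so each column is an explicit product of known matrices applied to the vector $(0,0,1)$.

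The main obstacle is then purely computational bookkeeping: I need to identify these three columns and verify that the resulting $3\times 3$ determinant is nonzero. I expect that both the first column ($\rotMat(\alpha_1^{\bad},\beta_1^{\bad})\cdot(0,0,1) = (0,0,1)$) and the third column ($S_4^{-1}\cdot(0,0,1)$) involve zero first entries, so that $\det(M)$ reduces up to sign to $\tfrac{1}{4}\beta_1^{\bad}$ times the nonzero $(2,3)$-entry of $S_4^{-1}$. Since $\beta_1^{\bad} = \tfrac{1}{4}\sqrt{-3\sqrt{5}+7}$ is visibly nonzero, this would finish the argument. If the matrix turns out not to be as triangular as I hope, the backup is a direct Laplace expansion, which should still produce a nonzero algebraic expression in $\alpha_1^{\bad},\beta_1^{\bad}$.
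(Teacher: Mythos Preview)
Your proposal is correct and follows essentially the same route as the paper's proof: the same block-diagonal reduction to $k=0$, the same first-order expansion via $\wt{\rotMat}(t_i^{\bad}+t_i)=\wt{\rotMat}(t_i^{\bad})\wt{\rotMat}(t_i)$ and $\wt{\rotMat}(t)=1+\tfrac{t}{2}\kk+O(t^2)$, the same identification $q_3=\wt{S}_4^{-1}$, and the same use of $\psi(q\kk q^{-1})=p(q)\cdot(0,0,1)$ to read off the columns of $M$. Your expected shape for $\det(M)$ is right (first and third columns have vanishing first entry); the paper obtains $\det(M)=\tfrac{1}{24}\sqrt{6}\,\beta_1^{\bad}$, so your predicted constant is off by a harmless factor, but the nonvanishing conclusion is the same.
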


\begin{lemma}\label{lemmaDerivativeIsPure}
Let $w\in \ker(\phi)$. Then $\frac{\partial \wt{W}}{\partial t_i}(\ttbad)$ is a pure quaternion for $i=1,\ldots, 3(k+1)$.

\begin{proof}
 Since $w$ belongs to $\ker(\phi)$, in $\Gamma_k$ it equals a product of conjugates of the $x_i$, $(bac)^3$ and their inverses.
 Recall that $S^2$ is invariant by the action of $S^3$.
 By \cref{propProductRule}, it is enough to prove that
 $\frac{\partial \wt{X}_j}{\partial t_i}(\ttbad)$ and $\frac{\partial (\wt{B}\wt{A}\wt{C})^3}{\partial t_i}(\ttbad)$
 are pure quaternions.
 
 For $i=0$ the first claim follows from the computation in the proof of \cref{lemmaMxInvertible} and
 is easy to verify for $i>0$.
 The second claim follows similarly by noting that $\left(\wt{B}\wt{A}\wt{C}\right)^3(\ttbad)=1$ and writing
\begin{align*}
    \left(\wt{B}\wt{A}\wt{C}\right)^3(\ttbad+\tt) &= \left(\wt{B}\wt{A}\left(1+\frac{t_1}{2}\kk\right)\wt{S}_0\left(1-\frac{t_1}{2}\kk\right)\right)^3+O(\tt^2).
 \end{align*}
\end{proof}

\end{lemma}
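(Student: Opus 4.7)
The plan is to exploit the description of the kernel established earlier: by Lemma \ref{lemaXandBAC3}, $\ker(\overline{\phi}) = \llangle x_0,\ldots,x_k,(bac)^3\rrangle$, so in $\Gamma_k$ the element $w$ equals a product of conjugates of $x_0,\ldots,x_k,(bac)^3$ and their inverses. Lifting this to the quaternion level, $\wt{W}$ is (up to a global sign that does not affect derivatives) the corresponding product in $S^3$ obtained by replacing each $x_i$ by $\wt{X}_i$ and $(bac)^3$ by $(\wt{B}\wt{A}\wt{C})^3$, with conjugations and inversions performed in $S^3$.

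The crucial observation is that at $\ttbad$ every generator evaluates to $1$: by the sign choices fixed in Definition \ref{defRepQuaternions} we have $\wt{X}_0(\ttbad) = 1$ and $(\wt{B}\wt{A}\wt{C})^3(\ttbad) = 1$, and for $i\geq 1$ the value is $\varphi(0,0,0) = 1$. Therefore $\wt{W}(\ttbad)\in\{+1,-1\}$, and the partial derivatives at $\ttbad$ must lie in $T_{\pm 1}S^3$, which is exactly the space of pure quaternions. Said more constructively, an induction on word length reduces the lemma to the following closure properties at $\ttbad$: if $f,g\co [0,2\pi]^3\times (\D^3)^k\to S^3$ satisfy $f(\ttbad)=g(\ttbad)=1$ with pure partials at $\ttbad$, and if $h\co [0,2\pi]^3\times (\D^3)^k\to S^3$ is any smooth map, then $fg$, $f^{-1}$ and $hfh^{-1}$ enjoy the same property. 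These are immediate from the product and chain rules for quaternion-valued functions (Lemma \ref{propProductRule} in the appendix): for instance $(hfh^{-1})'(\ttbad) = h(\ttbad)\,f'(\ttbad)\,h(\ttbad)^{-1}$ when $f(\ttbad)=1$, and conjugation by a unit quaternion preserves pure quaternions since $S^3$ acts on $S^2$.

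It remains to check the base cases, namely that $\frac{\partial \wt{X}_i}{\partial t_j}(\ttbad)$ and $\frac{\partial (\wt{B}\wt{A}\wt{C})^3}{\partial t_j}(\ttbad)$ are pure. For $\wt{X}_0$ this is exactly the Taylor expansion carried out in the proof of Lemma \ref{lemmaMxInvertible}, whose linear part is a sum of three quaternions of the form $q\kk q^{-1}$, each pure. For $\wt{X}_i$ with $i\geq 1$ the map is just $\varphi(t_{3i+1},t_{3i+2},t_{3i+3})$, whose partials at the origin equal $\ii,\jj,\kk$. For $(\wt{B}\wt{A}\wt{C})^3$, expanding each factor $\wt{\rotMat}(t_1) = 1+\tfrac{t_1}{2}\kk + O(t_1^2)$ around $\ttbad$ and using that $(\wt{B}\wt{A}\wt{C})^3(\ttbad)=1$ produces a linear part built from conjugates of $\kk$, hence pure.

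The only real source of friction is the sign ambiguity inherent in lifting products of $\SO(3)$-valued maps to $S^3$, together with the need to formalize the product/conjugation rule uniformly across arbitrary nested words; both are handled by the appendix calculus encapsulated in Lemma \ref{propProductRule}, which is why the remaining argument is then little more than an inductive unraveling of the word representing $w$.
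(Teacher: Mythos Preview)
Your proof is correct and follows essentially the same route as the paper: reduce to the generators $x_i$ and $(bac)^3$ via the product/conjugation/inversion rules of \cref{propProductRule}, then verify the base cases explicitly. One remark: your tangent-space observation---that $\wt{W}(\ttbad)=\pm 1$ forces every partial derivative to lie in $T_{\pm 1}S^3$, which is exactly the pure quaternions---is in fact a self-contained proof that makes the inductive unraveling and the base-case checks redundant; the paper does not take this shortcut, so your argument is actually slightly cleaner than what appears there.
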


\begin{lemma}\label{lemmaDifferentialW}
  Let $N=\ker(\overline{\phi})$ and let $w_0,\ldots, w_k \in \ker(\phi)$. If
  $N = \llangle w_0,\ldots,w_k\rrangle^{\Gamma_k}[N,N]$ 
  then $D\left(\Psi \wt{\mathbf{W}}\right)_{\ttbad}$ is invertible.
\begin{proof}
We may assume without loss of generality that $\wt{W}_{j}(\ttbad)=1$ for all $j$.
For each $j$ there are numbers $a_j, \ell_j\in \N_0$,
words $v_{j,1},\ldots, v_{j,a_j}, u_{j,1},\ldots, v_{j,a_j}\in \ker(\phi)$,
words $p_{j,1},\ldots p_{j,\ell_j}\in F(a,b,c,d,x_0,\ldots,x_k)$,
indices $\alpha_{j,1},\ldots, \alpha_{j,{\ell_j}}\in \{0,\ldots, k\}$
and signs $\varepsilon_{j,1},\ldots,\varepsilon_{j,\ell_j}\in\{1,-1\}$ such that in $\Gamma_k$ we have
$$x_j  = \prod_{s=1}^{\ell_j} p_{j,s} w_{\alpha_{j,s}}^{\varepsilon_{j,s}} p_{j,s}^{-1} \prod_{i=1}^{a_j}[u_{j,i},v_{j,i}].$$
Then we have
  $$\wt{X}_j(\ttbad+\tt)=\left(\prod_{s=1}^{\ell_j} \wt{P}_{j,s} \wt{W}_{\alpha_{j,s}}^{\varepsilon_{j,s}} \wt{P}_{j,s}^{-1} \prod_{i=1}^{a_j}[\wt{U}_{j,i},\wt{V}_{j,i}] \right)(\ttbad+\tt)$$
and using \cref{propProductRule} we obtain
  \begin{align*}
  \frac{\partial \wt{X}_j}{\partial t_i}(\ttbad)  &= \sum_{s=1}^{\ell_j}\wt{P}_{j,s}(\ttbad) \frac{\partial \wt{W}_{\alpha_{j,s}}^{\varepsilon_{j,s}}}{\partial t_i}(\ttbad) \wt{P}_{j,s}^{-1}(\ttbad)\\
  &= \sum_{s=1}^{\ell_j} \varepsilon_{j,s}\wt{P}_{j,s}(\ttbad) \frac{\partial \wt{W}_{\alpha_{j,s}}}{\partial t_i}(\ttbad) \wt{P}_{j,s}^{-1}(\ttbad)
  \end{align*}
By \cref{lemmaDerivativeIsPure},
$D\left(\Psi \wt{\mathbf{W}}\right)_{\ttbad}$
is invertible if and only if
$$\left\{ \left( \frac{ \partial \wt{W}_0 }{\partial t_i}(\ttbad) ,\ldots,\frac{ \partial \wt{W}_{k} }{\partial t_i}(\ttbad) \right) 
\tq  1\leq i \leq 3(k+1)   \right\}$$
is linearly independent over $\R$.
If $\lambda_i\in\R$ satisfy 
$$\sum_{i=1}^{3(k+1)} \lambda_i \frac{\partial \wt{\mathbf{W}}}{\partial t_i}(\ttbad)  = 0$$
it follows that
$$\sum_{i=1}^{3(k+1)} \lambda_i \frac{\partial \wt{\mathbf{X}}}{\partial t_i}(\ttbad)  = 0.$$
By \cref{lemmaMxInvertible}, $D\left(\Psi \wt{\mathbf{X}}\right)_{\ttbad}$
is invertible and, again by \cref{lemmaDerivativeIsPure}, the set 
$$\left\{ \left( \frac{ \partial \wt{X}_0 }{\partial t_i}(\ttbad) ,\ldots,\frac{ \partial \wt{X}_{k} }{\partial t_i}(\ttbad) \right) 
\tq  1\leq i \leq 3(k+1)   \right\}$$
is linearly independent over $\R$.
Thus $\lambda_1 = \ldots = \lambda_{3(k+1)}=0$ and we are done.
\end{proof}
\end{lemma}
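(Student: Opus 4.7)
The plan is to transfer the invertibility of $D(\Psi\wt{\mathbf{X}})_{\ttbad}$, established in \cref{lemmaMxInvertible}, over to $D(\Psi\wt{\mathbf{W}})_{\ttbad}$ by exploiting the algebraic hypothesis on the $w_i$. The hypothesis $N = \llangle w_0,\ldots,w_k\rrangle^{\Gamma_k}[N,N]$ expresses each generator $x_j$ (which lies in $N$ since $\phi(x_j)=1$) as a product in $\Gamma_k$ of $\Gamma_k$-conjugates of $w_i^{\pm 1}$ together with commutators of elements of $N$.

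After flipping the signs of the quaternionic lifts if needed (harmless since the vanishing loci are unchanged), I may assume $\wt{W}_j(\ttbad)=1$ for every $j$. Using the quaternionic product rule of \cref{appendixQuaternions}, I would first establish two local identities at $\ttbad$. Whenever $\wt{U}(\ttbad), \wt{V}(\ttbad) \in \{\pm 1\}$, the commutator satisfies $\partial_i[\wt{U},\wt{V}](\ttbad)=0$: the four Leibniz terms from $\wt{U}\wt{V}\wt{U}^{-1}\wt{V}^{-1}$ simplify using the centrality of $\pm 1$ in $\H$ and cancel in pairs. Whenever $\wt{W}(\ttbad)=1$, the conjugate satisfies $\partial_i(\wt{P}\wt{W}\wt{P}^{-1})(\ttbad) = \wt{P}(\ttbad)\,\partial_i \wt{W}(\ttbad)\,\wt{P}(\ttbad)^{-1}$, since the contributions from differentiating $\wt{P}$ and $\wt{P}^{-1}$ cancel.

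Applying these to a factorization $x_j = \prod_{s} p_{j,s} w_{\alpha_{j,s}}^{\varepsilon_{j,s}} p_{j,s}^{-1}\prod_i[u_{j,i},v_{j,i}]$ provided by the hypothesis (with $u_{j,i},v_{j,i}\in\ker(\phi)$, so their lifts evaluate to $\pm 1$ at $\ttbad$), the product rule yields
\[
\partial_i\wt{X}_j(\ttbad) \;=\; \sum_{s=1}^{\ell_j}\varepsilon_{j,s}\,\wt{P}_{j,s}(\ttbad)\,\partial_i\wt{W}_{\alpha_{j,s}}(\ttbad)\,\wt{P}_{j,s}(\ttbad)^{-1}.
\]
By \cref{lemmaDerivativeIsPure} each $\partial_i\wt{W}_\ell(\ttbad)$ is a pure quaternion, and conjugation by an element of $S^3$ acts $\R$-linearly on the pure quaternions. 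Hence any real linear relation $\sum_i\lambda_i\,\partial_i\wt{\mathbf{W}}(\ttbad)=0$ transfers through the displayed identity to $\sum_i\lambda_i\,\partial_i\wt{\mathbf{X}}(\ttbad)=0$, and \cref{lemmaMxInvertible} forces every $\lambda_i$ to vanish. This linear independence of the columns is exactly the invertibility of $D(\Psi\wt{\mathbf{W}})_{\ttbad}$.

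The main technical obstacle will be the careful book-keeping of signs when applying the quaternionic product rule along a long word, since the intermediate factors in $\wt{\mathbf{X}}$ and $\wt{\mathbf{W}}$ may evaluate to $-1$ rather than $+1$ at $\ttbad$; this is handled by centrality of $\pm 1$ in $\H$, which guarantees that spurious signs cancel. Once the two local derivative identities above are secured, the rest is a direct linear-algebraic transfer.
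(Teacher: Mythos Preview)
Your proposal is correct and follows essentially the same route as the paper: you use the hypothesis to express each $x_j$ in $\Gamma_k$ as a product of $\Gamma_k$-conjugates of $w_i^{\pm1}$ times commutators of elements of $N$, apply the quaternionic product rule (commutators contribute zero, conjugates contribute conjugation) to obtain the identical formula $\partial_i\wt{X}_j(\ttbad)=\sum_s\varepsilon_{j,s}\wt{P}_{j,s}(\ttbad)\,\partial_i\wt{W}_{\alpha_{j,s}}(\ttbad)\,\wt{P}_{j,s}(\ttbad)^{-1}$, and then transfer any real linear dependence among the $\partial_i\wt{\mathbf W}(\ttbad)$ to one among the $\partial_i\wt{\mathbf X}(\ttbad)$, contradicting \cref{lemmaMxInvertible}. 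Your remarks on the sign bookkeeping and the use of \cref{lemmaDerivativeIsPure} match the paper's treatment.
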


\begin{proof}[Proof of \cref{atLeastTwo}]
  We can assume that the total exponent of $x_i$ in $w_j$ is $\delta_{i,j}$. 
  To prove this, consider the abelianization and note that it is possible to achieve this by using the following operations:
  \begin{itemize}
    \item replacing $w_i$ by $w_iw_j$ (if $i\neq j$), 
    \item replacing $w_i$ by $w_i^{-1}$, and
    \item interchanging $w_i$ and $w_j$.
  \end{itemize}
  By \cref{lemmaDifferentialW} and \cref{lemmaDegreeAndDifferential}, the degree of  $\Psi\wt{\mathbf{W}}$ near $\ttbad$ is $\pm 1$.
  If $\Psi\wt{\mathbf{W}}$ has a zero on $\partial([0,2\pi]^3\times (\D^3)^k)$ we are done. Otherwise, by \cref{degreeWIsEven}, the degree of $\Psi\wt{\mathbf{W}}$ restricted to the boundary of $[0,2\pi]^3\times (\D^3)^k$ is even.
  It follows that there must be a point $\tt \neq \ttbad$ such that $\Psi\wt{\mathbf{W}}(\tt)=0$.
  This gives a second point in $Z(w_0,\ldots, w_k)$.
\end{proof}

\section{Group actions of $A_5$ on contractible $2$-complexes}
\label{sectionConsequences}
We can now prove the following.

\begin{theorem}\label{theoremWi}
\statementTheoremWi
\begin{proof}
This follows from \cref{atLeastTwo}, \cref{exactlyOneBad} and \cref{goodRep}.
\end{proof}
\end{theorem}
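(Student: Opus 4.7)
The plan is to derive a contradiction from the existence of such a presentation by combining the three principal ingredients built up in \cref{sectionModuli}: the universal point \cref{exactlyOneBad}, the degree-theoretic multiplicity result \cref{atLeastTwo}, and the representation-theoretic obstruction \cref{goodRep}. Concretely, I would argue by contradiction: assume that $A_5$ admits a presentation of the stated form with $w_0,\ldots,w_k \in \ker(\phi)$. Since this presentation has the relations of $\Gamma_k$ together with the $w_i$, the quotient $\Gamma_k/\llangle w_0,\ldots,w_k\rrangle$ is $A_5$ and the induced map $\overline{\overline{\phi}}\colon \Gamma_k/\llangle w_0,\ldots,w_k\rrangle \to A_5$ is an isomorphism. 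Under this identification, the normal closure $\llangle w_0,\ldots,w_k\rrangle^{\Gamma_k}$ coincides with the whole kernel $N=\ker(\overline{\phi})$; in particular the weaker equality $N = \llangle w_0,\ldots,w_k\rrangle^{\Gamma_k}[N,N]$ holds, which is exactly the hypothesis needed to invoke \cref{atLeastTwo}.

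Applying \cref{atLeastTwo} to the tuple $(w_0,\ldots,w_k)$ then produces at least two distinct points of the variety $Z(w_0,\ldots,w_k)\subset \C^6\times\SO(3,\C)^k$. However, \cref{exactlyOneBad} pins down exactly one of these points, namely the universal solution $\zzbad$, as the unique choice of parameters realizing $X_0=\cdots=X_k=(BAC)^3=1$. Consequently there must exist a second point $\mathbf{z} \neq \zzbad$ in $Z(w_0,\ldots,w_k)$ at which at least one of the matrices $\rho_{\mathbf{z}}(x_0),\ldots,\rho_{\mathbf{z}}(x_k),\rho_{\mathbf{z}}((bac)^3)$ is nontrivial, while every $\rho_{\mathbf{z}}(w_i)$ remains trivial.

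Feeding this representation $\rho_{\mathbf{z}}\colon \Gamma_k \to \SO(3,\C)$ into \cref{goodRep}, we conclude that $\overline{\overline{\phi}}\colon \Gamma_k/\llangle w_0,\ldots,w_k\rrangle \to A_5$ cannot be an isomorphism, contradicting the initial assumption. Thus no such presentation of $A_5$ exists.

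The only step that required genuine work is already contained in \cref{atLeastTwo}; the present theorem is essentially a packaging of the moduli/degree machinery with the algebraic characterization \cref{lemaXandBAC3} of $\ker(\overline{\phi})$. The conceptual hurdle, if any, is simply verifying that the algebraic hypothesis ``$A_5$ admits this presentation'' does translate into the abelianized normal-generation hypothesis needed by \cref{atLeastTwo}, which is immediate since full normal generation implies the corresponding statement modulo $[N,N]$.
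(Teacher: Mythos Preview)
Your proof is correct and follows exactly the approach of the paper: it is a careful unpacking of the one-line argument combining \cref{atLeastTwo}, \cref{exactlyOneBad}, and \cref{goodRep}, with the contradiction set up precisely as intended.
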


Now from \cref{theoremWi} and \cref{TheoremEquivalenceA5CDAndWi} we deduce.
\begin{theorem}\label{theoremCDA5}
\statementTheoremCDAFive
\end{theorem}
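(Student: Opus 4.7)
The plan is to obtain \cref{theoremCDA5} as an immediate corollary of \cref{theoremWi} together with the group-theoretic translation supplied by \cref{TheoremEquivalenceA5CDAndWi}. The latter shows that the statement of \cref{theoremCDA5}, namely ``every finite, $2$-dimensional contractible $A_5$-complex has a fixed point'', is logically equivalent to the non-existence of a certain family of presentations of $A_5$; and \cref{theoremWi} asserts precisely that non-existence. So the proof reduces to invoking these two results.

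To make the role of each tool transparent I would argue contrapositively. Suppose $X$ is a finite, $2$-dimensional contractible $A_5$-complex with no fixed point. By \cref{Refinement}, one replaces $X$ by a fixed point free, $2$-dimensional, finite acyclic $A_5$-complex $X'$ built from the Oliver--Segev graph $\XOS(A_5)$ by attaching $k\geq 0$ free orbits of $1$-cells and $k+1$ free orbits of $2$-cells, and obtains an epimorphism $\pi_1(X)\twoheadrightarrow \pi_1(X')$. Since $\pi_1(X)=1$, $X'$ is contractible. Applying Brown's method as set up in \cref{sectionBrown}, with the tree of representatives and vertex/edge stabilizers fixed in \cref{defGammaOSA5}, \cref{resultadoBrown} yields the presentation of $\wt{G}_{X'}$ appearing in condition (ii) of \cref{TheoremEquivalenceA5CDAndWi}; the triviality of $\pi_1(X')$ identifies $\wt{G}_{X'}$ with $A_5$, giving exactly the kind of presentation excluded by \cref{theoremWi}. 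Contradiction.

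The reason this final deduction is so short is that \cref{TheoremEquivalenceA5CDAndWi} has already absorbed the equivariant and Bass--Serre-theoretic work (the \cref{Refinement} reduction together with the Brown short exact sequence of \cref{sectionBrown}), so the geometric assertion transforms cleanly into a combinatorial one about presentations of $A_5$. The main obstacle lies upstream, in \cref{theoremWi} itself, whose proof combines the $\SO(3,\C)$-moduli $\rho_{\mathbf{z}}$ of \cref{teoModuli}, the uniqueness of the universal solution $\zzbad$ (\cref{exactlyOneBad}), and the quaternionic degree argument of \cref{atLeastTwo} that forbids the relevant presentations; once this machinery is in hand, the proof of \cref{theoremCDA5} is a one-line concatenation of \cref{theoremWi} with \cref{TheoremEquivalenceA5CDAndWi}.
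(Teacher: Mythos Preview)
Your proposal is correct and follows essentially the same approach as the paper: the paper's proof of \cref{theoremCDA5} is the one-line deduction from \cref{theoremWi} and \cref{TheoremEquivalenceA5CDAndWi}, exactly as you describe. Your expanded contrapositive explanation via \cref{Refinement} and \cref{resultadoBrown} simply unpacks what \cref{TheoremEquivalenceA5CDAndWi} already encapsulates, so there is no substantive difference.
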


\begin{corollary}\label{CDForSimpleGroupsContainingA5}
 \statementCDForSimpleGroupsContainingAFive
 \begin{proof}
  Let $G$ be one of these groups and let $X$ be a finite acyclic fixed point free $2$-dimensional $G$-complex.  
  By \cite[Proposition 3.3]{OS}, $A_5$ is a subgroup of $G$ and by \cref{lemmaNonsolvableSubgroup} the action of $A_5$ on $X$ is fixed point free.
  By \cref{theoremCDA5} $X$ cannot be contractible.
 \end{proof}

\end{corollary}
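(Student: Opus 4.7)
The plan is a short argument by contradiction which reduces to the main result \cref{theoremCDA5} via a subgroup embedding. Let $G$ be one of the listed simple groups and suppose, for contradiction, that $X$ is a finite contractible $2$-dimensional $G$-complex with $X^G=\emptyset$. Since $X$ is contractible it is in particular acyclic, so I am within the Oliver--Segev setting.

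First I would verify that $G$ lies in the list of \cref{teoA}. Since $G$ is simple, any normal subgroup of $G$ fixing a point of $X$ must be trivial; hence the subgroup $N$ from \cref{teoB} is trivial and the action is essential. Thus \cref{teoA} forces $G$ into its list, which, after a quick check of the field arithmetic, is consistent with every case of the corollary: $\PSL_2(2^{2k})$ and $\PSL_2(q)$ with $q\equiv\pm 3\pmod 8$ are directly in the list, while $\PSL_2(5^k)$ for odd $k$ satisfies $5^k\equiv -3\pmod 8$. For the edge case $\PSL_2(5^k)$ with even $k$ one has $5^k\equiv 1\pmod 8$, so this step already produces the contradiction and the corollary holds trivially there.

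Next I would exhibit an embedding $A_5\hookrightarrow G$ in every remaining case. For $G=\PSL_2(5^k)$ the inclusion $\mathbb{F}_5\subset \mathbb{F}_{5^k}$ gives $A_5\cong \PSL_2(5)\hookrightarrow \PSL_2(5^k)$. For $G=\PSL_2(2^{2k})$ one has $2^{2k}=4^k\equiv (-1)^k\equiv \pm 1\pmod 5$, and Dickson's classification of subgroups of $\PSL_2(q)$ produces a copy of $A_5$ whenever $q\equiv \pm 1\pmod 5$. For $\PSL_2(q)$ with $q\equiv\pm 1\pmod 5$ and $q\equiv\pm 3\pmod 8$, Dickson's theorem provides the embedding directly. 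This collection of facts is essentially the content of \cite[Proposition 3.3]{OS}.

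Finally I would transfer the fixed-point-free property: since $G$ lies in the list of \cref{teoA} and $A_5\leq G$ is non-solvable, \cref{lemmaNonsolvableSubgroup} implies that $A_5$ itself acts fixed point freely on $X$. Viewed as an $A_5$-complex, $X$ is still a finite contractible $2$-complex, so this contradicts \cref{theoremCDA5}. The core of the argument is the subgroup embedding combined with the transfer through \cref{lemmaNonsolvableSubgroup}; both are already available from the earlier sections, so I do not anticipate any substantive obstacle beyond the bookkeeping above.
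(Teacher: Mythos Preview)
Your proposal is correct and follows essentially the same line as the paper's proof: embed $A_5$ into $G$ via \cite[Proposition 3.3]{OS}, transfer the fixed-point-free property through \cref{lemmaNonsolvableSubgroup}, and invoke \cref{theoremCDA5}. Your extra bookkeeping---checking that $G$ lies in the list of \cref{teoA} and separating off the vacuous case $\PSL_2(5^{2k})$ where $5^{2k}\equiv 1\pmod 8$---is a welcome clarification (the paper's invocation of \cref{lemmaNonsolvableSubgroup} tacitly assumes this), but it does not change the structure of the argument.
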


Looking more carefully at the proof of \cref{theoremCDA5} we obtain the following.

\begin{theorem}\label{A5ComplexHasRepInSO3}
 Let $X$ be a fixed point free $2$-dimensional finite, acyclic $A_5$-complex.
 Then there is a nontrivial representation $\pi_1(X) \to \SO(3,\R)$.
  \begin{proof}
  By \cref{Refinement} we see that $\pi$ surjects onto the fundamental group of an acyclic $2$-dimensional $A_5$ complex $X'$ which is obtained from $\XOS(A_5)$ by attaching $k\geq 0$ free orbits of $1$-cells and $k+1$ free orbits of $2$-cells.
Now note that the representation constructed to prove \cref{theoremWi} restricted to $\pi_1(X')$ gives a nontrivial morphism into $\SO(3,\R)$.
 \end{proof}
\end{theorem}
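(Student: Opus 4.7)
The plan is to reduce the general $A_5$-complex $X$ to one of the standard form addressed by \cref{theoremWi}, harvest a suitable representation from the moduli machinery of \cref{sectionModuli}, and then pull back along the reduction to obtain a representation of $\pi_1(X)$.

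First I would invoke \cref{Refinement} to replace $X$ by an acyclic $A_5$-complex $X'$ obtained from $\XOS(A_5)$ by attaching $k \geq 0$ free orbits of $1$-cells and $k+1$ free orbits of $2$-cells, together with an epimorphism $\pi_1(X) \twoheadrightarrow \pi_1(X')$. Any nontrivial representation of $\pi_1(X')$ pulls back along this epimorphism to a nontrivial representation of $\pi_1(X)$, so it suffices to produce one for $X'$.

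Next, by Brown's theorem (\cref{resultadoBrown}) applied to $X'$ as in \cref{sectionBrown}, there is a short exact sequence $1 \to \pi_1(X') \to \wt{G}_{X'} \to A_5 \to 1$, and $\wt{G}_{X'}$ admits a presentation of the shape $\Gamma_k / \llangle w_0, \ldots, w_k \rrangle$ with $w_0,\ldots,w_k \in \ker(\phi)$. Under this identification, $\pi_1(X')$ corresponds to $\ker(\overline{\phi})$ in the quotient group. Consequently, any representation $\rho\colon \Gamma_k \to \SO(3,\R)$ that annihilates every $w_i$ and is nontrivial on at least one of the elements $x_0,\ldots,x_k,(bac)^3$ will descend to $\wt{G}_{X'}$ and restrict to a nontrivial homomorphism on $\pi_1(X')$; this is because, by \cref{lemaXandBAC3}, those distinguished elements normally generate $\ker(\overline{\phi})$, so a representation that is trivial on the whole kernel would have to kill each of them.

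The heart of the argument is then the extraction of such a $\rho$ from the proof of \cref{theoremWi}. The hypothesis that $X'$ is acyclic translates (via \cref{prop3by3}) into $N = \llangle w_0,\ldots,w_k \rrangle^{\Gamma_k}[N,N]$, so \cref{atLeastTwo} guarantees that the variety $Z(w_0,\ldots,w_k)$ contains a point $\mathbf{z} \neq \zzbad$. The representation $\rho_{\mathbf{z}}$ kills every $w_i$ by definition, and \cref{goodRep} then forces $\rho_{\mathbf{z}}$ to be nontrivial on at least one generator of $\ker(\overline{\phi})$, which is exactly what is needed. Chaining this with the reductions above yields the desired representation $\pi_1(X) \to \SO(3,\R)$. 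The only delicate point to verify carefully is the descent step, ensuring that ``nontriviality on one distinguished generator'' genuinely implies nontriviality of the restriction to $\ker(\overline{\phi})$; this is handled cleanly by the normal-closure computation in \cref{lemaXandBAC3}, and everything else is a routine assembly of already-established results.
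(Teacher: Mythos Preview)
Your argument is correct and follows exactly the route taken in the paper: reduce via \cref{Refinement}, then extract from the moduli the real representation furnished by \cref{atLeastTwo} and observe that its restriction to $\pi_1(X')$ is nontrivial. One small slip: the step ``$\mathbf{z}\neq\zzbad$ forces $\rho_{\mathbf{z}}$ to be nontrivial on some $r\in\{x_0,\ldots,x_k,(bac)^3\}$'' is justified by \cref{exactlyOneBad} (which characterises $\zzbad$ as the unique point where all these elements vanish), not by \cref{goodRep}, whose nontriviality condition is a hypothesis rather than a conclusion.
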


Since $A_5$ is the only finite perfect subgroup of $\SO(3,\R)$ we deduce the following.
\begin{corollary}\label{fundamentalGroupsA5}
\statementCharacterizationFundamentalGroupsAcyclicAFive
\end{corollary}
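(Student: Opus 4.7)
The statement follows quickly by combining the preceding \cref{A5ComplexHasRepInSO3} with the classical classification of finite subgroups of $\SO(3,\R)$. The plan is the following.

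First I would invoke \cref{A5ComplexHasRepInSO3} to obtain a nontrivial homomorphism $\rho\co \pi \to \SO(3,\R)$. Since $X$ is acyclic we have $H_1(X)=0$, and by Hurewicz $\pi/[\pi,\pi]\cong H_1(X)=0$, so $\pi$ is a perfect group. Consequently the image $\rho(\pi)$, being a quotient of $\pi$, is also perfect, and by construction it is nontrivial.

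Now I split into two cases. If $\pi$ is infinite, the first alternative of the statement holds and we are done. Otherwise $\pi$ is finite, hence $\rho(\pi)$ is a finite, nontrivial, perfect subgroup of $\SO(3,\R)$. The classification of finite subgroups of $\SO(3,\R)$ states that any such subgroup is isomorphic to one of $C_n$, $D_{2n}$, $A_4$, $S_4$, or $A_5$. Among these, the only nontrivial perfect group is $A_5$ (the abelianizations of $C_n$, $D_{2n}$, $A_4$, $S_4$ are nontrivial). Therefore $\rho(\pi)\cong A_5$, and $\rho$ provides the required epimorphism $\pi\twoheadrightarrow A_5$.

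There is no real obstacle here, since all the substantive work is already encapsulated in \cref{A5ComplexHasRepInSO3}; the only ingredients beyond that are the Hurewicz isomorphism and the classification of finite rotation groups in three dimensions. The only mild care needed is in observing that ``nontrivial representation'' combined with ``image is perfect'' immediately rules out every item of the finite subgroup list except $A_5$, which is exactly what the corollary predicts.
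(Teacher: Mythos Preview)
Your proof is correct and is exactly the argument the paper has in mind: the paper simply records one sentence before the corollary, ``Since $A_5$ is the only finite perfect subgroup of $\SO(3,\R)$ we deduce the following,'' and your write-up just spells this out (invoke \cref{A5ComplexHasRepInSO3}, note $\pi$ is perfect because $X$ is acyclic, and use the classification of finite rotation groups). There is nothing to add or correct.
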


Recall that $N=\ker\left(\overline{\phi}\right)$ is a free group of rank $60(k+1)$.
We can restate \cref{theoremWi} in the following way which highlights the connection with the relation gap problem (see \cite{HarlanderRelationGap1,HarlanderRelationGap2}).
\begin{corollary}
The extension
$$1\to N \to \Gamma_k \xrightarrow{\overline{\phi}} A_5\to 1$$ 
has a relation gap. 
That is, the $A_5$-module $N/[N,N]$ is free of rank $k+1$.
However $N$ cannot be generated by $k+1$ elements as a $\Gamma_k$-group.
\end{corollary}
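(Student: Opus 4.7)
The plan is to establish the two claims of the corollary separately: first, that $N$ cannot be normally generated in $\Gamma_k$ by $k+1$ elements, and second, that $N/[N,N]$ is a free $\Z A_5$-module of rank $k+1$.

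For the first claim, which is really where \cref{theoremWi} bites, we argue by contradiction. Suppose there exist $u_0,\ldots, u_k \in N$ with $\llangle u_0,\ldots, u_k\rrangle^{\Gamma_k} = N$. Lift each $u_i$ to a word $w_i$ in the free group $F(a,b,c,d,x_0,\ldots, x_k)$; since $u_i \in N = \ker \overline\phi$, these lifts automatically lie in $\ker\phi$. The quotient $\Gamma_k / \llangle u_0,\ldots, u_k\rrangle^{\Gamma_k}$ is then isomorphic to $A_5$ via $\overline\phi$, so we have a presentation of $A_5$ of precisely the form prohibited by \cref{theoremWi}. Contradiction.

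For the second claim, the plan is to compute $N/[N,N] \cong H_1(\Gamma;\Z)$ (with its conjugation $A_5$-action) from the cellular chain complex of a suitable acyclic $2$-complex $X$. Here $\Gamma$ denotes $\XOS(A_5)$ with $k$ additional free $A_5$-orbits of $1$-cells, so that $\Gamma_k = \wt{G}_\Gamma$ and $N = \pi_1(\Gamma)$ by Brown's theorem. Starting from the acyclic complex $X_0$ described in \cref{defGammaOSA5} (the barycentric subdivision of the $2$-skeleton of the Poincar\'e sphere), whose $1$-skeleton is $\XOS(A_5)$ and which has exactly one free orbit of $2$-cells, we apply \cref{LemaExpansiones} $k$ times with $H = 1$ to obtain an acyclic $A_5$-complex $X$ with $1$-skeleton $\Gamma$ and exactly $k+1$ free orbits of $2$-cells. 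Its cellular chain complex
\begin{equation*}
0 \to C_2(X) \xrightarrow{\partial_2} C_1(\Gamma) \xrightarrow{\partial_1} C_0(\Gamma) \to \Z \to 0
\end{equation*}
is then an exact sequence of $\Z A_5$-modules with $C_2(X) \cong (\Z A_5)^{k+1}$. Injectivity of $\partial_2$ (from $H_2(X)=0$) together with exactness at $C_1$ (from $H_1(X)=0$) yield $H_1(\Gamma;\Z) = \ker\partial_1 = \operatorname{im}\partial_2 \cong (\Z A_5)^{k+1}$, which is the desired conclusion.

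The one delicate point, and the closest thing to an obstacle, is the identification of $N/[N,N]$ with $H_1(\Gamma;\Z)$ as $\Z A_5$-modules: the conjugation action of $\Gamma_k/N \cong A_5$ on $N^{\mathrm{ab}}$ must be shown to coincide with the action on $H_1(\Gamma;\Z)$ induced by the $A_5$-action on $\Gamma$. This is standard and follows readily from the description of $\wt{G}_\Gamma$ as the group of pairs $(g,\wt g)$ with $\wt g$ a lift of $g\colon \Gamma\to\Gamma$ to the universal cover, as recalled in the remark following \cref{prop3by3}.
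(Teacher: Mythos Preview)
Your proof is correct. The paper presents this corollary without proof, simply as a restatement of \cref{theoremWi}, so there is nothing to compare against for the first claim; your contradiction argument via lifting normal generators to words in $\ker\phi$ is exactly the intended translation. For the second claim (freeness of the relation module), which the paper asserts but does not justify, your argument via the cellular chain complex of an acyclic $A_5$-complex $X$ with $1$-skeleton $\Gamma$ is a clean and correct way to see it: the identification $\Gamma_k \cong \wt{G}_\Gamma$, $N \cong \pi_1(\Gamma)$ is precisely what the paper establishes in \cref{sectionBrown}, the compatibility of the conjugation action with the geometric $A_5$-action on $H_1(\Gamma)$ is the content of \cref{AccionBrown}, and your construction of $X$ from \cref{defGammaOSA5} via repeated application of \cref{LemaExpansiones} with $H=1$ gives exactly the $k+1$ free orbits of $2$-cells needed.
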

Note that since $\Gamma_k$ is not free this is not an example of a presentation with a relation gap.

\appendix

\section{Equations over groups}
\label{appendixEquationsOverGroups}
Let $G$ be a group. An \textit{equation} over $G$ in the variables $x_1,\ldots,x_n$ is an element $w \in G*\freeGroup(x_1,\ldots,x_n)$.
We say that a system of equations
\begin{align*}
 w_1(x_1,\ldots,x_n) &= 1 \\
 w_2(x_1,\ldots,x_n) &= 1 \\
 \cdots\hspace{19pt} & \\
 w_m(x_1,\ldots,x_n) &= 1
\end{align*}
\textit{has a solution in an overgroup} of $G$ if the map $G\to G*\freeGroup(x_1,\ldots,x_m)/\llangle w_1,\ldots,w_m\rrangle$ is injective.
Such a system of equations determines an $(m\times n)$-matrix $M$ where $M_{i,j}$ is given by the total exponent of the letter $x_j$ in the word $w_i$.
A system is said to be \textit{independent} if the rank of $M$ is $m$.

One of the most important open problems in the theory of equations over groups is the Kervaire--Laudenbach--Howie conjecture \cite[Conjecture]{HowieEquationsOverGroups}.

\begin{conjecture}[Kervaire--Laudenbach--Howie]\label{KLHConjecture}
An independent system of equations over $G$ has a solution in an overgroup of $G$.
\end{conjecture}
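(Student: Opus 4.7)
The statement is the Kervaire--Laudenbach--Howie conjecture, which is a longstanding open problem, so any proposal must be candid about its speculative nature. The natural starting point is the topological reformulation due to Howie: the free product $G \ast \freeGroup(x_1,\ldots,x_n)$ is the fundamental group of the 2-complex $Y$ obtained from a presentation 2-complex $X_G$ for $G$ by wedging $n$ circles at a basepoint; the quotient $(G \ast \freeGroup(x_1,\ldots,x_n))/\llangle w_1,\ldots,w_m\rrangle$ is the fundamental group of the relative 2-complex $K$ obtained by attaching $m$ 2-cells to $Y$ along the loops representing the $w_i$. The system has a solution in an overgroup of $G$ precisely when the inclusion-induced map $G\to \pi_1(K)$ is injective, so the conjecture becomes: \emph{for an independent system, this map is injective.}

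The plan is to contradict a hypothetical nontrivial $g\in G$ in $\ker(G\to \pi_1(K))$ by exhibiting, for each such $g$, a homomorphism $\rho\co \pi_1(K)\to H$ into a well-behaved target (Lie group such as $\SO(n,\R)$, or an algebraically closed group in Scott's sense) whose restriction to $G$ maps $g$ nontrivially. First I would fix a target $H$ large enough that $G$ admits many faithful representations in $H$, and consider the affine variety $R$ of all extensions of a given faithful $\rho_0\co G\to H$ to $G\ast \freeGroup(x_1,\ldots,x_n)$; the $n$ free variables $x_i$ contribute $n\cdot \dim H$ to $\dim R$. Imposing the relations $\rho(w_i)=1$ cuts out a subvariety $Z \subseteq R$ defined by $m$ equations valued in $H$. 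The independence hypothesis (the exponent matrix $M$ has rank $m$) is the discrete shadow of the expectation that these $m$ equations are generically transverse, so that $\dim Z \geq (n-m)\cdot\dim H$.

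Next I would follow the strategy of \cref{sectionModuli,sectionQuaternions}: parametrise a distinguished component of $Z$ by an explicit moduli space, identify a ``universal'' base point $\zzbad$ corresponding to the trivial extension (all $x_i\mapsto 1$) at which every equation vanishes, and then run a mapping degree argument on the quaternionic (or more generally Lie-algebraic) cover. If one can show that the differential of the relation map at $\zzbad$ is nondegenerate modulo the independence data, a degree $\pm 1$ contribution near $\zzbad$ combined with a parity constraint on the boundary (analogous to \cref{lemmaDegreeEvenGeneralized}) forces a second solution, which by construction is an extension in which $g$ survives.

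The hard part, and the reason this is an open conjecture, is exactly the step from the formal dimension count to the existence of a genuine nontrivial solution for \emph{every} $G$, every independent system, and every nontrivial $g$. The paper's technique succeeds for the very rigid group $A_5$ because the target $\SO(3)$ is small, the universal point $\zzbad$ is unique and explicit, and the parity of the boundary degree is computable; in the general case neither the existence of a suitable faithful $\rho_0$, nor the transversality at the analogue of $\zzbad$, nor the boundary behaviour is guaranteed. Any serious attempt would need either (a) a reduction to a balanced case $m=n$ where one can attack by solving relative presentations over the group $G$ (as in the solved locally indicable case of Howie), or (b) an unconditional construction of a target $H$ and a moduli space for which the degree computation always produces an extra solution. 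I would begin by testing the moduli/degree approach on the simplest unknown balanced case ($m=n=1$ with $G$ a non-locally-indicable torsion group and $w_1$ of exponent sum $\pm 1$), where the method of \cref{sectionQuaternions} most directly applies.
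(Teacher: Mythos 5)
You have not proved anything, and you are right not to claim otherwise: the statement is the Kervaire--Laudenbach--Howie conjecture, which the paper records in \cref{appendixEquationsOverGroups} precisely as an open conjecture. The paper contains no proof of it; the only thing it proves in that direction is the much weaker \cref{casoParticularConjeturaSubcomplejo}, which invokes the Gerstenhaber--Rothaus theorem to handle the special case where the coefficient group embeds in a compact connected Lie group. So there is no ``paper's own proof'' to compare your attempt against, and your text should be read as a research programme rather than an argument.

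As a programme it is a fair summary of the known landscape, but the two load-bearing steps are exactly the ones that are not available in general, and it is worth being precise about why. First, the reduction to representations: to kill a hypothetical nontrivial $g\in\ker(G\to\pi_1(K))$ you need a target $H$ and a homomorphism $\rho_0\co G\to H$ with $\rho_0(g)\neq 1$ that \emph{extends} over the relations; for arbitrary $G$ there is no candidate $H$ (compact Lie groups only see the residually linear part of $G$, and this is the entire content of Gerstenhaber--Rothaus, not a warm-up to it). Second, the degree argument of \cref{sectionModuli,sectionQuaternions} is not a soft transversality principle: it depends on the existence of a single explicit universal point $\zzbad$ at which \emph{every} word in $\ker(\phi)$ vanishes (\cref{exactlyOneBad}), on the nondegeneracy of the differential there (\cref{lemmaDifferentialW}, which uses that the $w_i$ generate the relation module), and on the parity constraint of \cref{lemmaDegreeEvenGeneralized}, which requires normalising the exponent matrix to the identity --- possible only because the system is square and the coefficients are rigid. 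The ``discrete shadow'' heuristic that rank$(M)=m$ should yield transversality is exactly the gap between the conjecture and what is provable; dimension counts of the form $\dim Z\geq (n-m)\dim H$ say nothing about whether the map $G\to\pi_1(K)$ is injective, only about the size of the representation variety. Your closing suggestion --- to test the method on a balanced $m=n=1$ system over a torsion group with exponent sum $\pm 1$ --- is reasonable, but note that even there the method needs a faithful (or at least $g$-detecting) $\rho_0$ into a group where the quaternionic calculus applies, and that is precisely the hypothesis one cannot grant for free.
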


The Gerstenhaber--Rothaus theorem \cite[Theorem 3]{GerstenhaberRothaus} says that finitely generated subgroups of compact connected Lie groups satisfy \cref{KLHConjecture}.

\begin{proposition}\label{propSystemEquations}
 Let $X$ be a finite acyclic $2$-complex and let $A\subset X$ be an acyclic subcomplex.
 Then we can write
 $$\pi_1(X)=\pi_1(A)*F(x_1,\ldots,x_n)/\llangle w_1,\ldots, w_n\rrangle$$
 and the $(n\times n)$-matrix $M$ such that $M_{i,j}$ is the total exponent of $x_j$ in $w_i$ is invertible.
\begin{proof}
Take a maximal tree $T$ for $A$ and consider a maximal tree $\overline{T}$ of $X$ containing $T$.
Then $A/T\simeq A$ is an acyclic subcomplex of the acyclic $2$-complex $X/\overline{T}\simeq X$.
As usual, from $A/T$ we can read a presentation for $\pi_1(A)$ which is balanced since $A/T$ is acyclic.
Now we consider a variable $x_i$ for each $1$-cell of $X/\overline{T}$ which is not in $A/T$ and we read words from the attaching maps for the $2$-cells of $X/\overline{T}$ which are not part of $A/T$.
In this way we obtain equations in these variables with coefficients in $A$ which give the desired description of $\pi_1(X)$.
Since $X/\overline{T}$ is acyclic, there is an equal number of variables and equations and the matrix $M$ is invertible.
\end{proof}
\end{proposition}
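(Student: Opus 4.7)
The plan is to exploit the standard CW description of the fundamental group via a spanning tree, and to identify the matrix $M$ with the boundary map of a relative cellular chain complex which is forced to be an isomorphism by acyclicity.

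First I would pick a maximal tree $T\subset A$ and extend it to a maximal tree $\overline{T}\subset X$. Collapsing these trees, I get a homotopy equivalent pair $A/T\subset X/\overline{T}$ in which $A/T$ and $X/\overline{T}$ each have a single $0$-cell (the single vertex of the collapsed tree). From $A/T$ I read off a presentation of $\pi_1(A)$ in the standard way, taking one generator per $1$-cell of $A/T$ and one relator per $2$-cell (using the attaching map). For each $1$-cell of $X/\overline{T}$ not in $A/T$ I introduce a new generator $x_j$, and for each $2$-cell of $X/\overline{T}$ not in $A/T$ I record the attaching map as a relator $w_i$, which becomes a word in the generators of $\pi_1(A)$ together with $x_1,\ldots, x_n$. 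By van Kampen (or by the usual presentation of $\pi_1$ of a $2$-complex via a spanning tree) this yields exactly the required description $\pi_1(X)=\pi_1(A)*F(x_1,\ldots,x_n)/\llangle w_1,\ldots,w_n\rrangle$.

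Next, I need to argue that the number of new $1$-cells equals the number of new $2$-cells, so that $M$ is square, and that $M$ is invertible. For the counting, consider the relative cellular chain complex $C_*(X/\overline{T},A/T)$. Since $\overline{T}/T$ is connected and contains the unique $0$-cell of $X/\overline{T}$, the group $C_0(X/\overline{T},A/T)$ vanishes; $C_1$ is the free abelian group on the new $1$-cells and $C_2$ is the free abelian group on the new $2$-cells. The long exact sequence of the pair together with the acyclicity of both $X$ and $A$ gives $H_*(X/\overline{T},A/T)=H_*(X,A)=0$, so this relative chain complex is exact, forcing $\mathrm{rank}(C_2)=\mathrm{rank}(C_1)$ (hence $n$ equations in $n$ unknowns) and forcing the boundary map $\partial_2\colon C_2\to C_1$ to be an isomorphism.

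Finally, the key identification: with respect to the bases given by the new $2$-cells and new $1$-cells, the matrix of $\partial_2$ is exactly $M$. Indeed, the boundary in $C_1(X/\overline{T},A/T)$ of a $2$-cell attached via a closed edge-path kills the classes of the $1$-cells of $A/T$ (those are trivial in the quotient) and returns the abelianized exponent sum in the remaining $1$-cells $x_1,\ldots,x_n$, which is precisely $M_{i,j}$. Since $\partial_2$ is an isomorphism of free abelian groups of rank $n$, $M$ is invertible over $\Z$. The only point requiring a bit of care is the sign convention in reading off the exponent from the attaching edge-path (to make it agree with the cellular boundary), but this does not affect invertibility. I do not foresee a genuine obstacle here: the whole argument is a clean translation between the presentation viewpoint and the abelianized relative chain complex.
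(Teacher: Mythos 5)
Your proposal is correct and follows essentially the same route as the paper: collapse a maximal tree of $A$ extended to one of $X$, read the presentation from the cells of the quotient, and use acyclicity to get a square invertible exponent matrix. The only difference is that you spell out the last step (identifying $M$ with the boundary map of the relative cellular chain complex $C_*(X/\overline{T},A/T)$, which is forced to be an isomorphism), which the paper leaves implicit.
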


Now from the Gerstenhaber--Rothaus theorem we deduce.

\begin{proposition}\label{casoParticularConjeturaSubcomplejo}
Let $X$ be a finite acyclic $2$-complex.
If $A\subset X$ is an acyclic subcomplex and there is a nontrivial representation $\rho\colon \pi_1(A)\to \SO(n,\R)$ then $\pi_1(X)$ is nontrivial.
\begin{proof}
 By \cref{propSystemEquations} can write $\pi_1(X)=\pi_1(A) * F(x_1,\ldots, x_n) / \llangle w_1,\ldots, w_n\rrangle$ and the system is independent.
 Let $G_0=\rho(\pi_1(A))$.
 There is an induced map  $$\rho \colon \pi_1(A) * F(x_1,\ldots, x_n) \to G_0 * F(x_1,\ldots, x_n)$$ which induces an epimorphism
 $\pi_1(X)\to G_0*F(x_1,\ldots,x_n)/\llangle \rho(w_1),\ldots, \rho(w_n)\rrangle$.
 Finally from \cite[Theorem 3]{GerstenhaberRothaus} it follows that this group is nontrivial. 
 \end{proof}
\end{proposition}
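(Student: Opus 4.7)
The plan is a direct application of Proposition \ref{propSystemEquations} together with the Gerstenhaber--Rothaus theorem, following exactly the strategy already set up in the appendix.

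First, I would invoke Proposition \ref{propSystemEquations} to present
$$\pi_1(X) = \pi_1(A) * F(x_1,\ldots,x_n) / \llangle w_1,\ldots,w_n\rrangle,$$
where the exponent matrix $M$ of the $w_i$ in the variables $x_j$ is invertible. In particular this is an \emph{independent} system of equations over $\pi_1(A)$ in the sense of \cref{appendixEquationsOverGroups}.

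Next, I would push the coefficients into $\SO(n,\R)$. Let $G_0 = \rho(\pi_1(A))$, which is nontrivial by hypothesis and finitely generated (since $\pi_1(A)$ is, $A$ being a finite $2$-complex). The homomorphism $\rho$ extends to a homomorphism $\rho\colon \pi_1(A)*F(x_1,\ldots,x_n)\to G_0*F(x_1,\ldots,x_n)$ sending each $x_j$ to itself, and this descends to an epimorphism
$$\pi_1(X) \twoheadrightarrow G_0 * F(x_1,\ldots,x_n) / \llangle \rho(w_1),\ldots,\rho(w_n)\rrangle.$$
The crucial point is that the exponent matrix of the transformed system $\rho(w_1),\ldots,\rho(w_n)$ over $G_0$ is still $M$ (the exponent of $x_j$ in $w_i$ is not changed by the substitution), so the transformed system is independent over $G_0$.

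Finally, since $G_0\leq \SO(n,\R)$ is a finitely generated subgroup of a compact connected Lie group, the Gerstenhaber--Rothaus theorem guarantees that the independent system has a solution in an overgroup, i.e. the canonical map $G_0 \to G_0*F(x_1,\ldots,x_n)/\llangle \rho(w_1),\ldots,\rho(w_n)\rrangle$ is injective. In particular, this quotient is nontrivial (it contains the nontrivial group $G_0$), and hence $\pi_1(X)$, which surjects onto it, is nontrivial as well.

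The proof is essentially a bookkeeping exercise once the two main ingredients are in place; the main point to be careful about is confirming that independence of the system is preserved when we replace the coefficients in $\pi_1(A)$ by their images in $G_0$, which is immediate from the fact that the exponent matrix depends only on the variables and not on the coefficients.
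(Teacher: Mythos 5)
Your proposal is correct and follows essentially the same route as the paper: apply \cref{propSystemEquations} to get an independent system over $\pi_1(A)$, push the coefficients into $G_0=\rho(\pi_1(A))\leq \SO(n,\R)$, and conclude via the Gerstenhaber--Rothaus theorem. The extra details you supply (that the exponent matrix, and hence independence, is unchanged under the substitution, and that nontriviality follows because $G_0$ embeds in the quotient) are exactly the points the paper leaves implicit.
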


\section{Quaternion valued analytic functions}
\label{appendixQuaternions}

A \textit{quaternion valued analytic function} is a function $f\colon U \to \H$ where $U\subset \R^n$ is open, such that its components are analytic, that is a function that can be written as $$f=f_1+f_\ii \ii + f_\jj \jj + f_\kk \kk$$ with
$f_1,f_\ii,f_\jj,f_\kk\colon U\to \R$ are analytic.
For $i=1,\ldots, n$ we can define the partial derivative
$$\frac{\partial f}{\partial t_i}= \frac{\partial f_1}{\partial t_i} +\frac{\partial f_\ii}{\partial t_i} \ii+\frac{\partial f_\jj}{\partial t_i} \jj+\frac{\partial f_\kk}{\partial t_i} \kk.$$
We define
$$Df_{\tt} = \left(\frac{\partial f}{\partial t_1}(\tt),\ldots, \frac{\partial f}{\partial t_n}(\tt)\right).$$
If each coordinate of $\mathbf{F}=(f_1,\ldots,f_m)\colon U\to \H^m$ is analytic then we use the notation
$$\frac{\partial \mathbf{F}}{\partial t_i}=\left(\frac{\partial f_1}{\partial t_i},\ldots, \frac{\partial f_m}{\partial t_i}\right).$$
The usual properties extend to this context. We need the following

\begin{proposition}\label{propProductRule}
Let $f,g\colon U \to \H$ be analytic. Then

 (i) We have the product rule $$\displaystyle\frac{\partial f\cdot g }{\partial t_i}( \tt )  = \frac{\partial f}{\partial t_i}( \tt) g(\tt)+ f(\tt)\frac{\partial g}{\partial t_i}(\tt).$$
 
 (ii) Suppose $f$ is nowhere zero and $g(\tt_0)\in \R$ then 
 $$\frac{\partial \, f\cdot g \cdot\displaystyle\frac{1}{f} }{\partial t_i}( \tt_0) = f(\tt_0) \frac{\partial g}{\partial t_i}(\tt_0) f(\tt_0)^{-1}.$$
 
 (iii) Suppose $f(\tt_0)=\pm 1$ then
 $$\displaystyle\frac{\partial{\frac{1}{f}}}{\partial t_i}(\tt_0) = - \frac{\partial f}{\partial t_i}(\tt_0).$$
 
 (iv) Suppose that $f,g$ are nowhere zero and $f(\tt_0),g(\tt_0)\in \{1,-1\}$.
 Then the commutator $[f,g]=f\cdot g \cdot \frac{1}{f} \cdot \frac{1}{g}$ satisfies
 $[f,g](\tt_0)=1$ and
 $$\frac{\partial [f,g]}{\partial t_i}(\tt_0)=0.$$
 \begin{proof}
  (i) is a straightforward computation, (ii) and (iii) follow from (i). Finally, (iv) follows from the previous properties.
 \end{proof}

\end{proposition}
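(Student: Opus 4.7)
The plan is to verify each of the four identities by direct computation, with everything reducing to the product rule in part (i).

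I would prove (i) first by observing that quaternion multiplication is $\R$-bilinear. Expanding $f = f_1 + f_\ii \ii + f_\jj \jj + f_\kk \kk$ and similarly for $g$ makes each component of $fg$ a real polynomial in the components of $f$ and $g$, so the ordinary product rule for real analytic functions applied componentwise (with the order of the quaternion factors preserved, since $\H$ is noncommutative) yields the stated Leibniz formula.

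Parts (ii) and (iii) hinge on the general inverse formula $\frac{\partial(f^{-1})}{\partial t_i} = -f^{-1}\frac{\partial f}{\partial t_i}f^{-1}$, obtained by differentiating the identity $f \cdot f^{-1} = 1$ using (i); here $f^{-1} = \bar{f}/|f|^2$ is itself analytic since $f$ is nowhere zero. Part (iii) is then immediate: at $\tt_0$ the value $f(\tt_0) = \pm 1$ is central in $\H$ and satisfies $f(\tt_0)^2 = 1$, so the inverse formula collapses to $-\frac{\partial f}{\partial t_i}(\tt_0)$. For part (ii), applying the product rule twice to $f \cdot g \cdot f^{-1}$ produces three terms; substituting the inverse formula at $\tt_0$ and using that $g(\tt_0) \in \R$ is central, the first and third terms cancel, leaving exactly $f(\tt_0)\frac{\partial g}{\partial t_i}(\tt_0)f(\tt_0)^{-1}$.

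For part (iv), I would first note that $[f,g](\tt_0) = 1$, since $f(\tt_0), g(\tt_0) \in \{\pm 1\}$ commute with each other. Differentiating $fgf^{-1}g^{-1}$ via (i) yields four terms; since the scalars $\pm 1$ commute past every quaternion factor, each term collapses to $\pm \frac{\partial f}{\partial t_i}(\tt_0)$ or $\pm \frac{\partial g}{\partial t_i}(\tt_0)$, and part (iii) supplies exactly the sign matching needed for the four contributions to cancel in pairs. There is no conceptual obstacle anywhere in the proposition; the only real work is careful bookkeeping for noncommutative multiplication and signs at the specified base points.
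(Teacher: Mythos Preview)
Your proposal is correct and follows essentially the same approach as the paper: the paper's proof simply states that (i) is a straightforward computation, (ii) and (iii) follow from (i), and (iv) follows from the previous properties. Your expanded outline supplies precisely the routine details (bilinearity for the product rule, differentiating $f\cdot f^{-1}=1$ for the inverse formula, and using centrality of $\pm 1$ for the cancellations) that the paper leaves to the reader.
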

As usual we have the Taylor series
$$f(\tt_0 + \tt ) = f(\tt_0)+ \sum_{i=1}^n \frac{\partial f}{\partial t_i}(\tt_0) t_i + O(\tt^2).$$
From the product rule we see that we can multiply the Taylor series of two functions to obtain the Taylor series of the product.


\section{Alternative proof of \cref{lemaXandBAC3}}
\label{appendixGAP}
An alternative way to finish the proof of \cref{lemaXandBAC3} goes by noting that $x=bc$ and $y=ca$ satisfy $x^2=y^5=(xy)^3=1$.
Thus it would suffice to to show the group
$$\langle a,b,c \mid a^2,b^3, c^2, (ab)^3, (bc)^2, (ca)^5, (bac)^3 \rangle$$
is generated by $x,y$,
for it is well known that $\langle x,y \mid x^2,y^5, (xy)^3\rangle$ is a presentation of $A_5$.
To do this, it is enough to show that $a\in \langle x,y\rangle$.
The following computation proves this claim.
\begin{align*}
xy^2 x y^{-2} x y &=  (bc) (ca) (ca) (bc) (a^{-1}c^{-1})(a^{-1}c^{-1}) (bc) (ca) \\
\text{\color{gray}(using $a^2=c^2=1$) } &= ba ca bc ac ac ba \\
\text{\color{gray}(replacing $acba$ by $b^{-1}cab^{-1}c$) }&= ba ca bc ac  b^{-1}cab^{-1}c\\
\text{\color{gray}(replacing $cb^{-1}c$ by $b$) }&= ba ca bc abab^{-1}c\\
\text{\color{gray}(replacing $aba$ by $b^{2}ab^{2}$) }&= ba ca bc b^{2}abc\\
\text{\color{gray}(replacing $bcb$ by $c$) }&= ba ca cbabc\\
\text{\color{gray}(replacing $acba$ by $b^{-1}cab^{-1}c$) }&= ba cb^{-1}cab^{-1}c bc\\
\text{\color{gray}(replacing $cb^{-1}c$ by $b$) }&= ba bab^{-1}c bc\\
\text{\color{gray}(replacing $cbc$ by $b^{-1}$) }&= ba bab^{-2}\\
\text{\color{gray}(using $b^{3}=1$) }&= ba bab\\
\text{\color{gray}(using $(ab)^3=1$) }&= a.
\end{align*}


\section{Alternative proofs using SAGE}
\label{appendixSAGE}
The following SAGE code gives alternative proofs of \cref{teoModuli} and \cref{exactlyOneBad} which are easier to verify.
Note that SAGE computes exactly over the algebraic numbers so there is no numerical error.
The function \verb|check_rep|,  shows $A,B,C,D,X_0$ satisfy the defining relations for $\Gamma_0$ in $$M_3(\C[\alpha_1,\beta_1,\alpha_2,\beta_2,\alpha_3,\beta_3]/\langle \alpha_1^2+\beta_1^2-1, \alpha_2^2+\beta_2^2-1, \alpha_3^2+\beta_3^2-1\rangle).$$
The function \verb|find_universal_representations| gives the exact value of the unique solution $\zzbad$ of $X_0=(BAC)^3=1$ by solving the corresponding system of polynomial equations over the algebraic closure of $\Q$.
Note that in this code we use $x_i$, $y_i$ instead of $\alpha_i$, $\beta_i$.

\VerbatimInput[tabsize=4]{code/moduli.SAGE}

\renewcommand{\thesection}{\arabic{section}}

\bibliographystyle{alpha}
\bibliography{references}
\end{document}